\numberwithin{equation}{section}
\newcommand{\ot}{\otimes}
\newcommand{\id}{\text{id}}
\newcommand{\Z}{\mathbb{Z}}
\newcommand{\R}{\mathbb{R}}
\newcommand{\N}{\mathbb{N}}
\newcommand{\C}{\mathbb{C}}
\newcommand{\G}{\mathbb{G}}
\newcommand{\HH}{\mathbb{H}}
\newcommand{\aut}{\mathbb{G}^{aut}(B,\psi)}
\newcommand{\autd}{\mathbb{G}^{aut}(B,\psi,d)}
\newcommand{\um}{\frac{1}{2}}
\newcommand{\wprthg}{H_{(B,\tpsi)}^+(\G)}
\newcommand{\wprhg}{H_{(B,\psi)}^+(\G)}
\newcommand{\wprhgd}{H_{(B,\psi,d)}^+(\G)}
\newcommand{\wprd}{\widehat{\Gamma}\wr_{*} \mathbb{G}^{aut}(B,\psi)}
\newcommand{\wprg}{\G\wr_{*} \mathbb{G}^{aut}(B,\psi)}
\newcommand{\wprgd}{\G\wr_{*} \mathbb{G}^{aut}(B,\psi,d)}
\newcommand{\wprgdc}{C(\G)\ast_w C(\G^{aut}(B,\psi,d))}
\newcommand{\Tr}{\text{Tr}}
\newcommand{\irr}{\text{Irr}}
\newcommand{\rep}{\text{Rep}}
\newcommand{\Hom}{\text{Hom}}
\newcommand{\tpsi}{\tilde{\psi}}
\newcommand{\End}{\text{End}}
\newcommand{\cg}{C(\mathbb{G})}
\newcommand{\caut}{C(\mathbb{G}^{aut}(B,\psi))}
\newcommand{\cautd}{C(\mathbb{G}^{aut}(B,\psi,d))}
\newcommand{\Gq}{\mathbb{G}}
\newcommand{\idbb}{\id_{B\ot B'}}
\newcommand{\uq}{\ot 1_{M/I}}
\newcommand{\cc}{\mathscr{C}}
\newcommand{\dd}{\mathscr{D}}
\newcommand{\cct}{\tilde{\mathscr{C}}}
\newcommand{\ddt}{\tilde{\mathscr{D}}}
\theoremstyle{plain}
\newtheorem{theorem}{Theorem}[section]
\newtheorem{lemma}[theorem]{Lemma}
\newtheorem{prop}[theorem]{Proposition}
\newtheorem{cor}[theorem]{Corollary}
\theoremstyle{definition}
\newtheorem{deff}[theorem]{Definition}
\newtheorem{example}[theorem]{Example}
\theoremstyle{remark}
\newtheorem{notaz}{Notation}
\newtheorem{remark}{Remark}
\begin{document}

\begin{center}
{\LARGE\bf  The free wreath product of a compact quantum group by a quantum automorphism group}
\bigskip

{\sc Pierre Fima and Lorenzo Pittau}
\end{center}

\begin{abstract}
\noindent Let $\G$ be a compact quantum group and $\aut$ be the quantum automorphism group of a finite dimensional C*-algebra $(B,\psi)$. In this paper, we study the free wreath product $\wprg$. First of all, we describe its space of intertwiners and find its fusion semiring. Then, we prove some stability properties of the free wreath product operation. In particular, we find under which conditions two free wreath products are monoidally equivalent or have isomorphic fusion semirings. We also establish some analytic and algebraic properties of $\wprg$. As a last result, we prove that the free wreath product of two quantum automorphism groups can be seen as the quotient of a suitable quantum automorphism group.
\end{abstract}

%%%%%%%%%%%%%%%%%%%%%%%%%%%%%%%%%%%%%%%%%%%%%%%%%%
\section*{Introduction}
%%%%%%%%%%%%%%%%%%%%%%%%%%%%%%%%%%%%%%%%%%%%%%%%%%

The theory of compact quantum groups was introduced by Woronowicz in \cite{wor87} and further developed in \cite{wor91,wor98}. This framework includes many of the well known and deeply investigated quantum groups constructed by Drinfeld and Jimbo some years before as a $q$-deformation of the universal enveloping algebras of classical Lie algebras (see \cite{dri86,dri87,jim85}). New examples of compact quantum groups, which are not contained in previous theories, were found by Wang in \cite{wan93,wan98} and are the so-called free quantum groups. In the first of these two papers, he defines the quantum analogue of the classical orthogonal and unitary groups as the noncommutative versions of the C*-algebras $C(O_n)$ and $C(U_n)$ respectively.

In \cite{wan98}, Wang studies the classical notion of automorphism group of a finite space and introduces its quantum counterpart. In particular, in the framework of noncommutative geometry, the role of the finite measured space is played by a finite dimensional C*-algebra $B$ endowed with a faithful state $\psi$. The quantum automorphism group $\aut$ is then defined as the universal object in the category of compact quantum groups acting on $B$ and leaving $\psi$ invariant. An important quantum group of this family is the quantum symmetric group $S_n^+$; it is obtained choosing as space $\C^n$ endowed with the usual trace. 

The first relevant results concerning these families of compact quantum groups introduced by Wang are due to Banica. In \cite{ban96,ban97,ban99a,ban02}, he classified their irreducible representations and found the fusion rules. An intermediate essential step towards these results is the understanding of the spaces of intertwiners between tensor products of the fundamental representation. One of the possible approaches to this problem is the combinatorial one. More precisely, it consists in describing the intertwiners by making use of special classes of noncrossing partitions. This technique was used for the first time in \cite{ban99a,ban02}, where the intertwiners are described by using Temperley-Lieb diagrams, but its systematic use has begun only some years later by Banica and Speicher \cite{bs09}. They proved that many quantum groups allow such a combinatorial description and the notion of easy quantum group was introduced to denote the class of all compact quantum groups with this property. 
A complete classification of the (orthogonal) easy quantum groups was later done by Weber and Raum and this allowed also to discover new compact quantum groups (see \cite{web13,rw13}).

As for the classical groups, another way to find new quantum groups is through different types of product operations. In \cite{wan95}, Wang introduced the notion of free product and showed that the irreducible representations of the quantum group so constructed are the alternate tensor products of the irreducible representations of the factors. 
A second important operation to consider in this framework is the free wreath product by the quantum symmetric group. In the classical case the wreath product of a group $G$ by $S_n$, denoted $G\wr S_n$, is defined thanks to the natural action of $S_n$ on a set of $n$ copies of $G$. Bichon in \cite{bic04} introduced the free quantum version $\G\wr_* S_n^+$ by using an action of $S_n^+$ on $n$ copies of $\G$. 
Moreover, always in analogy with the classical case, the free wreath product, from a geometric point of view, allows to describe the quantum symmetry group of $n$ copies of a finite graph in terms of the symmetry group of the graph and of $S_n^+$. 
In \cite{bic04} a first easy example was analysed in detail and its representation theory was described: the free wreath product $\widehat{\Z}_2\wr_* S_n^+$. A more general analysis of the compact quantum groups so obtained was done in three successive steps. In \cite{bv09}, Banica and Vergnioux studied $\widehat{\Z}_s\wr_* S_n^+$ and found its irreducible representations and fusion rules in the case $n\geq 4$. This investigation was continued by Lemeux in \cite{lem14}, where he considered the free wreath product of a discrete group $\Gamma$ by $S_n^+$, $n\geq 4$. Finally, in \cite{lt14}, Lemeux and Tarrago presented an even more general result; they analysed the case of the free wreath product of a compact matrix quantum group of Kac type $\G$ by $S_n^+$ and found its representation category by using an argument of monoidal equivalence.

In this last paper it is also possible to find many results concerning the properties of the algebras associated to a free wreath product. In particular, by using a result from \cite{dcfy14}, it has been proved that, if $\G$ has the Haagerup property, also the von Neumann algebra $L^\infty(\G\wr_*S_n^+)$ has this property. Moreover, the reduced C*-algebra $C_r(\G\wr_*S_n^+)$ is exact if $C_r(\G)$ is exact. In \cite{lem14}, Lemeux proved the simplicity and uniqueness of the trace for the reduced C*-algebra in the discrete case. His argument, based on the so-called Powers method and on the simplicity and uniqueness of the trace of $S_n^+$ ($n\geq 8$) proved by Brannan in \cite{bra13}, was extended by Wahl in \cite{wah14} to the general case of a matrix pseudogroup of Kac type.

In \cite{pit14}, the second author defined and analysed the free wreath product of a discrete group by a quantum automorphism group, giving a partial generalization of Bichon's definition. First of all, he gave a new description of the spaces of intertwiners of $\aut$ which makes use of non crossing partitions instead of Temperley-Lieb diagrams. Thanks to this result, he was then able to find the irreducible representations and the fusion rules of $\wprd$, when $\psi$ is a $\delta$-form and $\dim(B)\geq 4$. Moreover, he proved some properties of the associated operator algebras, extending the results of \cite{lem14}.

We recall that $G(X)$, the group of symmetries of a graph $X$ with $n$ vertices, can be seen as a quotient of the symmetric group $S_n$. Moreover, when dealing with the usual notion of wreath product, there is a sort of geometric interpretation thanks to formulas such as
$$G(X\ast Y)\cong G(X)\wr G(Y)$$
for a suitable notion of product $\ast$ and only for graphs satisfying certain conditions.

The main motivation behind the definition of free wreath product by a quantum permutation group given by Bichon is to find a quantum analogue of these results. More precisely, if we denote by $G^+(X)$ the group of the quantum symmetries of a finite graph $X$, we have formulas such as
$$G^+(X\ast Y)\cong G^+(X)\wr_* G^+(Y)$$
for a suitable notion of $\ast$ and some assumptions on the graphs (see \cite{bic04,bb07,cha15}).\\

In this paper, we introduce and study the free wreath product construction by a quantum automorphism group of a \textit{finite quantum graph}, generalizing all the previous construction (free wreath product by the quantum permutation group or the quantum automorphism group of a finite metric space or graph etc). Our main motivation behind such a construction is to generalize, unify and simplify 	all the known formulas of the type $G^+(X\ast Y)\cong G^+(X)\wr_* G^+(Y)$.

After a preliminary section (section $1$), We introduce in section $2$ the notion of finite quantum graph $(B,\psi,d)$, where $B$ is a finite dimensional C*-algebra, $\psi$ is a faithful state on $B$ and $d\in\mathcal{L}(B)$ and its quantum automorphism group $\G^{aut}(B,\psi,d)$ which generalizes and unifies the notions of quantum permutation group, quantum autmorphism group of a finite dimensional C*-algebra as well as quantum automorphism group of a finite graph or more generally of a finite metric space. Then we introduce the free wreath product $\G\wr_*\aut$ of a compact quantum group $\G$ by $\G^{aut}(B,\psi,d)$ and prove that we have a compact quantum group.

When $d=1$, we describe in section $3$ its spaces of intertwiners by means of specially decorated noncrossing partitions, when $\psi$ is a $\delta$-form and $\dim(B)\geq 4$. From this, by adapting (in section $4$) a monoidal equivalence argument used in \cite{lt14}, we can deduce the irreducible representations and the fusion rules in section $5$. We also prove in section $5$ a decomposition formula of any free wreath product $\G\wr_*\aut$ with a general faithful state $\psi$ in a free product of free wreath wreath products $\G_i\wr_* \G^{aut}(B_i,\psi_i)$ where the state $\psi_i$ is a $\delta_i$-form for all $i$. Such a formula allows to describe the irreducible representations and the fusion rules of any feee wreath product, not necessarily one with a $\delta$-form.

The description of the intertwiners is also used to prove some stability properties of the free wreath product with a $\delta$-form in section $6$. Firstly, we show that the relation of monoidal equivalence between two compact quantum group is preserved by the free wreath product operation. More precisely, if $\G_1\simeq_{mon}\G_2$ and $\aut\simeq_{mon} \G^{aut}(B',\psi')$ then $\G_1\wr_*\aut\simeq_{mon}\G_2\wr_* \G^{aut}(B',\psi')$.
A second result concerns the fusion semiring of the free wreath product $\wprg$; we prove that it is completely determined by the fusion semiring of $\G$.

We analyse some properties of the associated operator algebras in section $7$. Namely, it will be proved by using some results from \cite{dcfy14} that the dual of $\wprg$ has the central ACPAP if $\widehat{\G}$ has the central ACPAP; it follows that in this case the corresponding von Neumann algebra has the Haagerup property. Similarly, a result from \cite{vv07} allows us to show that the exactness of $\widehat{\G}$ implies the exactness of the dual of $\wprg$. If $\psi$ is moreover a trace, we can generalize an argument of Lemeux \cite{lem14}, based on a result of Brannan \cite{bra13} and on the Powers method adapted by Banica \cite{ban97}, in order to show the simplicity and uniqueness of the trace for the reduced C*-algebra of $\wprg$.

In the last section, we finally obtain the main result of this paper: we will show that the C*-algebra of the free wreath product $\G^{aut}(B',\psi')\wr_{*}\aut$ is isomorphic to a suitable explicit and very simple quotient of $C(\G^{aut}(B\otimes B',\psi\otimes \psi'))$. This generalizes the results of \cite{bb07,cha15}.

%%%%%%%%%%%%%%%%%%%%%%%%%%%%%%%%%%%%%%%%%%%%%%%%%%
\subsubsection*{Acknowledgement}The authors are grateful to T. Banica for stimulating discussions. P.F. is supported by ANR grants NEUMANN and OSQPI.
%%%%%%%%%%%%%%%%%%%%%%%%%%%%%%%%%%%%%%%%%%%%%%%%%%

%%%%%%%%%%%%%%%%%%%%%%%%%%%%%%%%%%%%%%%%%%%%%%%%%%
\section{Preliminary results}
%%%%%%%%%%%%%%%%%%%%%%%%%%%%%%%%%%%%%%%%%%%%%%%%%%

This first section contains important definitions and results which will be used throughout all the paper. In the first part, we recall the basic definitions of the theory of compact quantum groups developed by Woronowicz in \cite{wor87,wor91,wor98}. Then, we introduce the language of noncrossing partitions and we present some known results about the quantum automorphism group of a finite dimensional C*-algebra endowed with a state. In particular, we recall the description discovered in \cite{pit14} of the spaces of intertwiners by making use of noncrossing partitions instead of the original description \cite{ban99a} in terms of Temperley-Lieb diagrams. First of all, we fix some notations.

\begin{notaz}
Given a Hilbert space $H$, we write $\mathcal{L}(H)$ the C*-algebra of bounded linear operators on $H$. The symbol $\otimes$ will be used to denote the tensor product of Hilbert spaces, the minimal tensor product of C*-algebras or the tensor product of von Neumann algebras, depending on the context. The symbol $*$ denotes the maximal free product of unital C*-algebras and the symbol $\widehat{*}$ denotes the free product of compact quantum groups (\cite{wan95}). We use the symbol $\underset{\text{red}}{*}$ for the reduced free product of unital C*-algebras and we usually make reference to the state in the notation.
\end{notaz}

Given a unital C*-algebra $A$ and elements $v\in \mathcal{L}(H)\ot A$, $w\in\mathcal{L}(K)\ot A$, where $H$ and $K$ are  finite dimensional Hilbert spaces, we define the vector space of intertwiners
$$\Hom(v,w)=\{T\in \mathcal{L}(H,K)\,|\,(T\otimes 1)v=w(T\otimes 1)\}.$$
Note that if $T\in \Hom(u,v)$ and $S\in \Hom(v,w)$ then $ST\in \Hom(u,w)$. Also, if $v,w$ are unitaries and $T\in \Hom(v,w)$ then $T^*\in \Hom(w,v)$. We define the tensor product of such elements:
$$v\ot w:=v_{13}w_{23}\in \mathcal{L}(H)\ot \mathcal{L}(K)\ot A\simeq \mathcal{L}(H\ot K)\ot A,$$
where we use the leg-numbering notation. Let $(e_i)_i$ be an orthonormal basis of $H$ with associated matrix units $e_{ij}\in\mathcal{L}(H)$ and $u\in\mathcal{L}(H)\ot A$. We write $u=\sum e_{ij}\ot u_{ij}$ and we call $u_{ij}\in A$ the \textit{coefficients} of $u$.

Given $u=\sum e_{ij}\ot u_{ij}\in\mathcal{L}(H)\ot A$, we define $\widetilde{u}=\sum e_{ij}^*\ot u_{ij}^*\in\mathcal{L}(\overline{H})\ot A$, where $e_{ij}$ are the matrix units associated to the dual basis $(e_i^*)_i$ of the orthonormal basis $(e_i)_i$ of $H$ and $\overline{H}$ is the dual of $H$. Observe that the linear maps $s_u\,:\,\C\rightarrow H\ot \overline{H}$, $z\mapsto z\Omega$ and $\widetilde{s}_u\,:\,\C\rightarrow \overline{H}\ot H$, $z\mapsto z\widetilde{\Omega}$, where $\Omega=\frac{1}{\dim(H)}\sum_ie_i\ot e_i^*$ and $\widetilde{\Omega}=\frac{1}{\dim(H)}\sum_ie_i^*\ot e_i$ are isometries in $\Hom(1,u\ot\widetilde{u})$ and  $\Hom(1,\widetilde{u}\ot u)$ respectively, where $1$ denotes the unit of $A$. Even starting with a unitary $u$, the element $\widetilde{u}$ is non necessarily invertible. However, the following well known remark will be useful to check that $\widetilde{u}$ is indeed invertible.
 
 \begin{remark}\label{RmkInvertible}
For any unitary $u\in\mathcal{L}(H)\ot A$ the element $\widetilde{u}$ is invertible whenever there exists a unitary $v\in\mathcal{L}(K)\ot A$, with $K$ a Hilbert space such that $\dim(K)=\dim(H)$ and a non-degenerate\footnote{$t\in\mathcal{L}(\C, K\ot H)$ is \textit{non-degnerate} if $\{(\id\ot\xi^*)(t(1))\,:\,\xi\in H\}=K$.} $t\in \Hom(1,v\ot u)$. Indeed, if such $v$ and $t$ exist we may consider $Q:=(t^*\ot\id_{\overline{H}})\circ(\id_{K}\ot s_u)\in \Hom(v,\widetilde{u})$. Note that, for all $\xi\in H$, one has $Q^*\xi^*=\frac{1}{\dim(H)}(\xi^*\ot\id)(t(1))$ hence $Q^*$ is surjective, since $t$ is non-degenerate. Moreover, since $\dim(\overline{H})=\dim(K)$, $Q^*\in\mathcal{L}(\overline{H},K)$ is invertible so $Q$ also and we have $\widetilde{u}=(Q^{-1}\ot 1)v(Q\ot 1)$, which implies that $\widetilde{u}$ is invertible, since $v$ is unitary.
 \end{remark}

There are several equivalent definitions of a Woronowicz's compact quantum group. It can be defined, for example, as a pair $\G=(\cg,\Delta)$ where $\cg$ is a unital C*-algebra and $\Delta:\cg\longrightarrow \cg\otimes \cg$ a coassociative comultiplication such that $\Delta(\cg)(\cg\otimes 1)$ and $\Delta(\cg)(1\otimes \cg)$ are dense in $\cg\otimes \cg$. For our purpose we will adopt another definition.
\begin{deff}\label{cqg2}
A compact quantum group $\G$ is a pair $(\cg,\Delta)$, where $\cg$ is a unital C*-algebra and $\Delta:\cg\longrightarrow \cg\otimes \cg$ a $\ast$-homomorphism together with a family of unitaries $u^{\alpha}\in \mathcal{L}(H_\alpha)\ot\cg$ for $\alpha\in I$, where $H_\alpha$ is a finite dimensional Hilbert space, such that:
\begin{itemize}
\item the $*$-subalgebra generated by the coefficients of $u^\alpha$, for $\alpha\in I$, is dense in $\cg$;
\item $(\id\ot\Delta)(u^\alpha)=(u^\alpha)_{12}(u^\alpha)_{13}$ for all $\alpha\in I$;
\item $\widetilde{u^\alpha}$ is invertible for all $\alpha\in I$.
\end{itemize}
\end{deff}

A (finite dimensional and unitary) representation of the compact quantum group $(\cg,\Delta)$ is a unitary $v\in \mathcal{L}(H)\ot \cg$, where $H$ is a finite dimensional Hilbert space, such that $(\id\ot\Delta)(v)=v_{12}v_{13}$. Two representations $v$ and $w$ are called (unitarily) equivalent if there exists a unitary in $\Hom(v,w)$ and a representation $u$ is called irreducible if $\Hom(u,u)=\C\id$. We write $1_\G$ the trivial representation of $\G$, it is the unit of $C(\G)$ viewed as a one-dimensional representation.

We denote by $\irr(\G)$ the set of equivalence classes of the irreducible representations of $\G$ and, for all $\alpha\in\irr(\G)$, we choose a representative $u^\alpha\in\mathcal{L}(H_\alpha)\ot C(\G)$. The linear span of the coefficients of the $u^\alpha$, for $\alpha\in\irr(\G)$, which is a unital dense $*$-subalgebra of $C(\G)$, will be denoted by Pol$(\G)$. In all the paper we will assume that $\G$ is given in its maximal form meaning that $C(\G)$ is the enveloping C*-algebra of Pol$(\G)$. We will denote by $h\in C(\G)^*$ the Haar state of $\G$ and by $C_r(\G)$ (resp. $L^\infty(\G)$) the C*-algebra (resp. von Neumann algebra) generated by the GNS representation of $h$.

It is known that for any representation $u$ there exists a unique (up to equivalence) representation $\overline{u}$ such that $\Hom(1_\G,u\ot\overline{u})\neq\{0\}\neq \Hom(1_\G,\overline{u}\ot u)$. Such a representation is called the contragredient or conjugate of $u$. Actually the representation $\overline{u}$ is obtained by unitarizing the invertible element $\widetilde{u}\in\mathcal{L}(\overline{H})\ot C(\G)$ defined previously. The contragredient representation is well defined at the level of $\irr(\G)$: for $\alpha\in\irr(\G)$, we write $\overline{\alpha}$ the class of the contragredient of $\alpha$. Let $s_\alpha\in \Hom(1_\G,\alpha\ot\overline{\alpha})\setminus\{0\}$ and $J_\alpha\,:\, H_\alpha\rightarrow H_{\overline{\alpha}}$ the invertible antilinear map defined by $\langle J_\alpha\xi,\eta\rangle=\langle s_\alpha(1),\xi\ot\eta\rangle$ for all $\xi\in H_\alpha$, $\eta\in H_{\overline{\alpha}}$. Define $R_\alpha:=J_\alpha^*J_\alpha\in\mathcal{L}(H_\alpha)$. We may and will always choose $s_\alpha$ and $s_{\overline{\alpha}}$ normalized such that $\Vert s_\alpha(1)\Vert=\Vert s_{\overline{\alpha}}(1)\Vert$ and $J_{\overline{\alpha}}=J_\alpha^{-1}$. With such a normalization $R_\alpha$ is uniquely determined, $\Tr(R_\alpha)=\Vert s_\alpha(1)\Vert=\Tr(R_\alpha^{-1})$, where $\Tr$ is the unique trace and $\mathcal{L}(H_\alpha)$ such that $\Tr(1)=$dim$(H_\alpha)$ and $R_{\overline{\alpha}}=R_\alpha^{-1}$. The number $\Tr(R_\alpha)$ is called the quantum dimension of $\alpha$ and is denoted dim$_q(\alpha)$. A compact quantum group $\G$ is said to be of Kac type if the antilinear map $J_\alpha$ is anti-unitary for all $\alpha\in\irr(\G)$. In particular, this means that $J_\alpha^*=J_\alpha^{-1}$, therefore $R_\alpha=\id$ for all $\alpha$. This is also equivalent to dim$(\alpha)=$dim$_q(\alpha)$ for all $\alpha$.

Now, we recall some useful definitions and results from the category theory. In \cite{wor88}, Woronowicz proved that the finite dimensional representations of a compact quantum group $\G$ form a rigid C*-tensor category denoted $\mathcal{R}(\G)$ and called the \textit{representation category} of $\G$. More precisions on this notion can be found in \cite{nt13}.

\begin{deff}\label{defrigid}
A C*-tensor category $\mathscr{C}$ is called \textit{rigid} if for any $U\in Ob(\mathscr{C})$ there exist $\bar{U}\in Ob(\mathscr{C})$ and two morphisms $R\in\Hom(1,\bar{U}\otimes U)$, $\bar{R}\in\Hom(1, U\otimes \bar{U})$ such that $$(\bar{R}^*\otimes \id_U)(\id_U\otimes R)=\id_U \quad 
(R^*\otimes \id_{\bar{U}})(\id_{\bar{U}}\otimes \bar{R})=\id_{\bar{U}}$$
The object $\bar{U}$ is called the \textit{conjugate} of $U$ and the conditions satisfied by $R,\bar{R}$ are the \textit{conjugate equations}.
\end{deff}

Note that with the normalization explained above we have $(s_{\overline{\alpha}}^*\ot\id)(\id\ot s_\alpha)=\id$ for all $\alpha\in\irr(\G)$. This is why a $\mathcal{R}(\G)$ is a rigid C*-tensor category. In the abstract context of rigid C*-tensor category the Frobenius reciprocity still holds.

\begin{theorem}\label{teofrobrec}
Let $U$ be an object of a C*-tensor category with conjugate $\bar{U}$ and let $R$, $\bar{R}$ be the morphisms solving the conjugate equations. Then, the linear application $\Hom(U\ot V,W)\longrightarrow\Hom(V,\bar{U}\ot W)$ given by $T\mapsto(\id_{\bar{U}}\ot T)(R\ot \id_V)$ is an isomorphism. Similarly, we have $\Hom(V\ot U,W)\cong\Hom(V,W\ot \bar{U})$
\end{theorem}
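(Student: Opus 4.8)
The plan is to construct an explicit two-sided inverse of the map in the statement and to verify the two inverse identities using only bifunctoriality of $\ot$ and the two conjugate equations; rigidity is then automatic. Write $\Phi\colon\Hom(U\ot V,W)\to\Hom(V,\bar U\ot W)$ for the given linear map $\Phi(T)=(\id_{\bar U}\ot T)(R\ot\id_V)$, and define $\Psi\colon\Hom(V,\bar U\ot W)\to\Hom(U\ot V,W)$ by $\Psi(S)=(\bar R^*\ot\id_W)(\id_U\ot S)$. Both are well defined: since $R\in\Hom(1,\bar U\ot U)$ we get $R\ot\id_V\in\Hom(V,\bar U\ot U\ot V)$ and $\id_{\bar U}\ot T\in\Hom(\bar U\ot U\ot V,\bar U\ot W)$, so $\Phi(T)\in\Hom(V,\bar U\ot W)$; likewise $\bar R\in\Hom(1,U\ot\bar U)$ gives $\bar R^*\in\Hom(U\ot\bar U,1)$, whence $\Psi(S)\in\Hom(U\ot V,W)$. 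Both maps are manifestly linear.

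Next I would check $\Psi\circ\Phi=\id$. Expanding the definitions and using the interchange law to move $T$ past $\bar R^*$ (suppressing the unit constraints as usual), one obtains
$$\Psi(\Phi(T))=(\bar R^*\ot\id_W)(\id_{U\ot\bar U}\ot T)(\id_U\ot R\ot\id_V)=T\circ\big((\bar R^*\ot\id_U)(\id_U\ot R)\ot\id_V\big)=T,$$
the last equality being the first conjugate equation $(\bar R^*\ot\id_U)(\id_U\ot R)=\id_U$. For $\Phi\circ\Psi=\id$, a symmetric computation, this time sliding $S$ past $R$, gives
$$\Phi(\Psi(S))=(\id_{\bar U}\ot\bar R^*\ot\id_W)(R\ot\id_{\bar U\ot W})\circ S=\big((\id_{\bar U}\ot\bar R^*)(R\ot\id_{\bar U})\ot\id_W\big)\circ S=S,$$
where one uses that $(\id_{\bar U}\ot\bar R^*)(R\ot\id_{\bar U})$ is the adjoint of $(R^*\ot\id_{\bar U})(\id_{\bar U}\ot\bar R)$, which equals $\id_{\bar U}$ by the second conjugate equation. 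This establishes the first isomorphism.

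For the second isomorphism $\Hom(V\ot U,W)\cong\Hom(V,W\ot\bar U)$ I would argue identically, replacing $\Phi$ and $\Psi$ by $T\mapsto(T\ot\id_{\bar U})(\id_V\ot R)$ and $S\mapsto(\id_W\ot\bar R^*)(S\ot\id_U)$; the two inverse identities again reduce to the two conjugate equations. Equivalently, this case is the first isomorphism applied in the monoidal category with reversed tensor product, in which $(\bar U,\bar R,R)$ is a conjugate datum for $U$.

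There is no real obstacle here: this is a routine \emph{zig-zag} verification, and the only points requiring care are the systematic suppression of the associativity and unit isomorphisms and remembering to pass to adjoints in order to match $(\id_{\bar U}\ot\bar R^*)(R\ot\id_{\bar U})$ with the second conjugate equation rather than the first.
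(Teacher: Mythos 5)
Your zig-zag argument is the standard proof of Frobenius reciprocity and it is correct; note that the paper itself offers no proof of this theorem, presenting it as a known fact about rigid C*-tensor categories (cf.\ the reference to \cite{nt13}), so there is nothing to compare against beyond the statement. Your verification of the first isomorphism is complete: $\Psi\circ\Phi=\id$ follows from the first conjugate equation, and $\Phi\circ\Psi=\id$ from the adjoint of the second, exactly as you say, and you correctly isolate the one subtle point (passing to adjoints to turn $(R^*\ot\id_{\bar U})(\id_{\bar U}\ot\bar R)=\id_{\bar U}$ into $(\id_{\bar U}\ot\bar R^*)(R\ot\id_{\bar U})=\id_{\bar U}$, which is legitimate in a C*-tensor category since $*$ is compatible with $\ot$ and reverses composition).

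One slip to fix in the second isomorphism: the explicit maps you wrote do not typecheck. Since $R\in\Hom(1,\bar U\ot U)$, the composite $(T\ot\id_{\bar U})(\id_V\ot R)$ is not defined for $T\in\Hom(V\ot U,W)$; you need $\bar R\in\Hom(1,U\ot\bar U)$ there, i.e.\ the maps should be $T\mapsto(T\ot\id_{\bar U})(\id_V\ot\bar R)$ and $S\mapsto(\id_W\ot R^*)(S\ot\id_U)$, with the two inverse identities now reducing to the second conjugate equation and the adjoint of the first. Your own closing remark already contains the correction: in the reversed tensor category the conjugate datum for $U$ is $(\bar U,\bar R,R)$ with the roles of $R$ and $\bar R$ interchanged, and transporting the first isomorphism through that reversal produces precisely the formulas with $\bar R$ and $R^*$ above.
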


Another notion particularly important in this paper is that of monoidal equivalence. Two compact quantum groups $\G$ and $\mathbb{H}$ are called \textit{monoidally equivalent} if their representations categories are unitarily monoidally equivalent i.e. if there exists unitary tensor functors $F\,:\,\mathcal{R}(\G)\rightarrow \mathcal{R}(\mathbb{H})$ and $G\,:\,\mathcal{R}(\mathbb{H})\rightarrow\mathcal{R}(\G)$ such that both $FG$ and $GF$ are naturally monoidally isomorphic to the identity functors and the natural isomorphisms $FG\simeq\id$ and $GF\simeq\id$ are unitary (see \cite{nt13}). By restricting the unitary tensor functor $F$ to the irreducible representations of $\G$ we get another, more concrete, equivalent definition.

\begin{deff}[\cite{bdrv06}]\label{defmoneq}
Let $\G_1$ and $\G_2$ be two compact quantum groups. They are monoidally equivalent (written $\G_1\simeq_{mon}\G_2$) if there exists a bijection $\phi:\irr(\G_1)\longrightarrow\irr(\G_2)$, $\phi(1_{\G_1})=1_{\G_2}$ such that, for any $k,l\in\N$ and for any $\alpha_i,\beta_j\in\irr(\G)$, $1\leq i\leq k$, $1\leq j\leq l$, there is an isomorphism $$\phi:\Hom_{\G_1}(\alpha_1\otimes...\otimes\alpha_k; \beta_1\otimes...\otimes \beta_l)\longrightarrow\Hom_{\G_2}(\phi(\alpha_1)\otimes...\otimes\phi(\alpha_k); \phi(\beta_1)\otimes...\otimes \phi(\beta_l))\text{ such that:}$$
\begin{enumerate}
\item[i)] $\phi(\id)=\id$,
\item[ii)] $\phi(f\otimes g)=\phi(f)\otimes \phi(g)$,
\item[iii)] $\phi(f^*)=\phi(f)^*$,
\item[iv)] $\phi(fg)=\phi(f)\phi(g)$ for $f,g$ composable morphisms.
\end{enumerate}
\end{deff}

The proof of a monoidal equivalence between two compact quantum groups can be simplified by making use of the following proposition.

\begin{prop}\label{propgenmoneq}
Let $\cc,\dd$ be two rigid C*-tensor categories, possibly non complete with respect to direct sums and subobjects. Let $\cct,\ddt$ be their completions. If $F:\cc\longrightarrow \dd$ is a unitary monoidal equivalence between the two categories $\cc$ and $\dd$, then there exists a unitary monoidal equivalence $\tilde{F}:\cct\longrightarrow \ddt$ which extends $F$.
\end{prop}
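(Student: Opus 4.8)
The plan is to use the standard description of the completion $\widetilde{\cc}$ of a rigid C*-tensor category $\cc$ with respect to direct sums and subobjects (the \emph{Cauchy completion} or \emph{idempotent completion with finite direct sums}), and then extend $F$ in the only way it can be extended. Recall that objects of $\widetilde{\cc}$ can be taken to be pairs $(\bigoplus_{i=1}^n U_i, p)$ where $U_i\in Ob(\cc)$ and $p=p^*=p^2\in\End_{\widetilde{\cc}}(\bigoplus U_i)$ is a projection, with morphism spaces defined by compressing; the matrix algebra $\End_{\widetilde{\cc}}(\bigoplus_{i=1}^n U_i)$ is the $*$-algebra of matrices $(f_{ij})$ with $f_{ij}\in\Hom_{\cc}(U_j,U_i)$, and the original category $\cc$ sits inside $\widetilde{\cc}$ as a full subcategory. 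So the first step is just to fix this model precisely.

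The second step is to \emph{define} $\widetilde{F}$. On objects, send $(\bigoplus_{i=1}^n U_i, p)$ to $(\bigoplus_{i=1}^n F(U_i), F_*(p))$, where $F_*(p)$ is the image of $p$ under the $*$-algebra homomorphism $\End_{\widetilde{\cc}}(\bigoplus U_i)\to\End_{\widetilde{\dd}}(\bigoplus F(U_i))$ induced entrywise by $F$ (using that $F$ is a $*$-functor, so it preserves adjoints, hence projections); on morphisms, apply $F$ entrywise and compress. One checks directly that $\widetilde{F}$ is a well-defined $*$-functor, that it is unitary (faithful on morphism spaces and isometric, since $F$ is), and that it extends $F$. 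Monoidality of $\widetilde{F}$ is inherited from that of $F$: the tensor structure on $\widetilde{\cc}$ is built from that of $\cc$ by the formula $(\bigoplus U_i,p)\otimes(\bigoplus V_j,q)=(\bigoplus_{i,j}U_i\otimes V_j,p\otimes q)$, and the natural isomorphisms $F(X)\otimes F(Y)\cong F(X\otimes Y)$ assemble blockwise into the required natural isomorphisms for $\widetilde{F}$; the hexagon/pentagon coherences for $\widetilde{F}$ reduce to those already known for $F$.

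The third step is to show $\widetilde{F}$ is an \emph{equivalence}, i.e. fully faithful and essentially surjective. Full faithfulness is immediate from the construction since morphism spaces in the completion are built by compressing those of $\cc$, and $F$ is already fully faithful. Essential surjectivity is where the hypothesis that $F$ is a monoidal equivalence $\cc\to\dd$ (not merely fully faithful) is used: every object of $\widetilde{\dd}$ is a subobject of a finite direct sum $\bigoplus_j W_j$ with $W_j\in Ob(\dd)$, each $W_j$ is isomorphic in $\dd$ to some $F(U_j)$, and the projection cutting out the given subobject is, via full faithfulness of $F$ on the relevant endomorphism algebra, the image under $F_*$ of a projection in $\End_{\widetilde{\cc}}$; hence the object lies in the essential image of $\widetilde{F}$. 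Finally, having an equivalence, one invokes the general fact that a fully faithful, essentially surjective unitary monoidal functor admits a unitary monoidal quasi-inverse (one builds $\widetilde{G}$ by choosing preimages and transport-of-structure isomorphisms), which gives the two-sided statement implicit in ``monoidal equivalence.''

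I expect the only genuine subtlety to be \textbf{bookkeeping rather than mathematics}: verifying that the tensor constraints and unit constraint of $\widetilde{F}$ satisfy the coherence axioms, and that unitarity is preserved, is a routine but notationally heavy diagram chase once the Cauchy-completion model is pinned down. The one place where a real hypothesis is consumed is essential surjectivity, which needs $F$ to be an equivalence onto $\dd$ and needs $\dd$'s completion to be generated (under subobjects and finite sums) by $\dd$ itself — both of which hold by construction of $\widetilde{\dd}$. No completeness or semisimplicity assumption on $\cc$ or $\dd$ is needed beyond what is stated, since the Cauchy completion makes sense for any C*-tensor category with subobjects for morphisms, and rigidity of $\cc,\dd$ passes to $\widetilde{\cc},\widetilde{\dd}$ automatically.
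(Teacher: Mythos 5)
Your outline is correct, but note that the paper itself gives no proof of this proposition: it simply declares it a standard result and defers to the references \cite{bor94a,bor94b}. What you have written is precisely the standard Cauchy-completion argument that those references formalize --- model $\cct$ by pairs $(\bigoplus_i U_i,p)$ with $p$ a projection in the matrix $*$-algebra, extend $F$ entrywise, and check full faithfulness, essential surjectivity and the coherence of the tensor constraints blockwise. The only point worth making explicit, which you gesture at but do not spell out, is that producing a \emph{unitary} monoidal quasi-inverse (and unitary natural isomorphisms $\tilde F\tilde G\simeq\id$, $\tilde G\tilde F\simeq\id$) uses polar decomposition in the C*-category to replace the a priori merely invertible transport-of-structure isomorphisms by unitaries; this is routine and does not affect the validity of your argument.
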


This is a standard result in category theory, we refer to \cite{bor94a,bor94b} for the proof and for further details.

Since we will widely rely on the notion of noncrossing partition in what follows, we recall some standard definitions and notations.

\begin{deff}
Let $k,l\in\N$. Let $p=P_1\sqcup P_2\sqcup...\sqcup P_t$ be a partition of the set $I_{k+l}=\{1,...,k+l\}$. The subsets $P_i$, $i=1,...,t$ are called the blocks of the partition. The partition $p$ is said to be a noncrossing partition if, for every possible choice of elements $r_1<r_2<r_3<r_4$, $r_j\in I_{k+l}$ such that $r_1$ and $r_3$ belong to the same block, then $r_2$ and $r_4$ belong to different blocks.
As we fixed $k$ and $l$, such a noncrossing partition $p$ can be represented by a diagram with $k$ upper points and $l$ lower points constructed as follows:
\begin{itemize}
\item consider two horizontal lines and draw $k$ points on the upper one and $l$ points on the lower one
\item number the $k$ upper points from $1$ to $k$ and from the left to the right
\item number the $l$ lower points from $k+1$ to $k+l$ and from the right to the left
\item connect to each other the points in a same block of the partition by drawing strings in the part of the plane between the two lines
\end{itemize}
From the non crossing condition, it follows that the strings which connect points of different blocks can be drawn in such a way that they do not intersect. We denote $NC(k,l)$ the set of noncrossing partitions between $k$ upper points and $l$ lower points. The total number of blocks of $p\in NC(k,l)$ is denoted $b(p)$.
\end{deff}

We have three fundamental operations which allow to create new noncrossing partitions from existing ones. 

\begin{deff}\label{opdiag}
Let $p\in NC(k,l)$ and $q\in NC(v,w)$. The tensor product $p\otimes q$ is the noncrossing partition obtained by horizontal concatenation of $p$ and $q$. The 
The adjoint $p^*$ is the noncrossing partition corresponding to the diagram in $NC(l,k)$ obtained by reflecting the diagram $p$ with respect to an horizontal line between the two rows of points. If $l=v$, we define the composition $qp$ as the diagram obtained by identifying the lower points of $p$ with the upper points of $q$ and by removing all the blocks made up of points of the central line only. We refer to such blocks as \textit{central blocks} and their number is denoted $cb(p,q)$.
\end{deff}

For our computations, we need to introduce also the non-standard notion of cycle. More precisions can be found in \cite{pit14}.
\begin{deff}
The composition operation between the noncrossing partitions $p$ and $q$ can create some (closed) cycles which will not be present in the final noncrossing partition. Heuristically, they are the closed surfaces which appear when two or more central points are connected both in the upper and in the lower noncrossing partition. 
In a more formal way, the number of cycles is 
\begin{equation*}
cy(p,q):=l+b(qp)+cb(p,q)-b(p)-b(q)
\end{equation*}
\end{deff}

\begin{example}
In order to clarify the definitions and the notations above consider the following example with $p\in NC(1,3)$ and $q\in NC(3,2)$.

{\centering
 \begin{tikzpicture}[thick,font=\small]
    \path (.3,.6) node{p=}
    		  (.3,-.6) node{q=}
    		  (1.5,1) node{$\bullet$} node[above](a){}

          (1,.2) node{$\bullet$} node[below](d){}
          (1.5,.2) node{$\bullet$} node[below](e){}
          (2,.2) node{$\bullet$} node[below](f){}

          (1,-.2) node{$\bullet$} node[above](dd){}
          (1.5,-.2) node{$\bullet$} node[above](ee){}		  
          (2,-.2) node{$\bullet$} node[above](ff){}
          
          (1.2,-1) node{$\bullet$} node[below](l){}
          (1.8,-1) node{$\bullet$} node[below](m){}
          
          (5.5,1) node{$\bullet$}
          (5,0) node{$\bullet$}
          (5.5,0) node{$\bullet$}
          (6,0) node{$\bullet$}

          (5.2,-1) node{$\bullet$}
          (5.8,-1) node{$\bullet$}

          (4.3,-0.03) node{qp=}
          (6.5,0) node{=}
          (7.25,1) node{$\bullet$}node[above](q){}
          (7,-1) node{$\bullet$}node[below](w){}
          (7.5,-1) node{$\bullet$}node[below](z){};

    \draw (d) -- +(0,+0.4) -| (f);
    \draw (1.5,.47) -- (1.5,1);
    \draw (dd) -- +(0,-0.4) -| (ff);
    \draw (l) -- +(0,+0.4) -| (m);
    \draw (1.5,-.47) -- (1.5,-0.73);
    
    \draw (5,0) -- +(0,+0.3) -| (6,0);
    \draw (5.5,.3) -- (5.5,1);
    
    \draw (5,0) -- +(0,-0.3) -| (6,0);
    \draw (5.2,-1) -- +(0,+0.3) -| (5.8,-1);
    \draw (5.5,-.3) -- (5.5,-0.7);
    
    \draw (7,-1) -- +(0,+0.3) -| (7.5,-1);
    \draw (7.25,1) -- +(0,-1.7);
\end{tikzpicture}
\par}
\noindent
We have $b(p)=b(q)=2$, $b(qp)=1$ and $cb(p,q)=1$.\\
Then, the number of cycles is $cy(p,q)=3+1+1-2-2=1$.
\end{example}

A particularly important notion used in this paper is that of quantum automorphism group. Before recalling its definition and some related results, let us introduce some notations.

$B$ will always denote a finite dimensional C*-algebra. We denote by $m:B\otimes B\longrightarrow B$ the multiplication on $B$ and $\eta\,:\,\C\rightarrow B$ the unit of $B$. Since $B$ is a finite dimensional C*-algebra, it is of the form $B=\bigoplus_{T=1}^c M_{n_T}(\C)$. Let $\mathscr{B}=\{(e_{ij}^T)_{i,j=1,\ldots,n_T},T=1,\ldots,c\}$ be a basis of matrix units and note that $m(e_{ij}^T \otimes e_{kl}^S)=\delta_{jk}\delta_{TS} e_{il}^T$ and and $\eta(1)=\sum_{T=1}^c\sum_{i=1}^{n_T}e_{ii}^T$.

In what follows, $\psi:B\longrightarrow \C$ will always denote a faithful state on $B$. Note that $\psi$ induces a scalar product on $B$ defined by $\langle x,y\rangle:=\psi(y^*x)$ for all $x,y\in B$. This scalar product turns $B$ into a Hilbert space. The state $\psi$ is called a $\delta$\textit{-form} if $mm^*=\delta\cdot \id_B$, where $m^*$ is the adjoint with respect to the scalar product induced by $\psi$ and $\delta>0$. Recall that $\psi$ is of the form $\psi=\bigoplus_{T=1}^c \Tr(Q_T\cdot)$ for a suitable family $Q_T\in M_{n_T}(\C)$, $Q_T>0$, $\sum_T \Tr(Q_T)=1$, where $\Tr$ is the unique trace of $M_{n_T}(\C)$ such that $\Tr(1)=n_T$. It is easy to check that $\psi$ is a $\delta$-form if and only if $\Tr(Q_T^{-1})=\delta$ for all $T=1,\ldots,c$.

In what follows, we will always assume that the matrix units $e_{ij}^{T}$ are associated with a basis of $\C^{n_T}$ which diagonalizes $Q_{T}$. We will denote $Q_{i,T}$ the eigenvalue in position $(i,i)$ of the matrix $Q_T$ written with respect to this fixed diagonalizing basis. Note that $\psi(e_{ij}^T)=\Tr(Q_T e_{ij}^T)=\delta_{ij}Q_{i,T}$ and the basis $\mathscr{B}$ is then always orthogonal with respect to the scalar product induced by $\psi$. By normalizing $\mathscr{B}$ we obtain the orthonormal basis $\mathscr{B}'=\{b_{ij}^T | b_{ij}^T=\psi(e_{jj}^T)^{-\um} e_{ij}^T=Q_{j,T}^{-\um} e_{ij}^T,\; i,j=1,\ldots,n_T,\;T=1,\ldots,c\}$
which will be widely used in this paper.

We recall below the definition of the quantum automorphism group of $(B,\psi)$ and show how to describe its intertwiners by means of noncrossing partitions. More details on the construction of this object and on its representation theory can be found in \cite{ban99a, wan98, pit14}.

\begin{deff}\label{autb}
The universal unital C*-algebra $\caut$ generated by the coefficients of $u\in\mathcal{L}(B)\ot \caut$ with the relations
$$ (i)\,\,u\text{ is unitary }\quad (ii)\,\,m\in \Hom(u^{\otimes 2},u)\quad (iii)\,\,\eta\in \Hom(1,u),$$
and endowed with the unique unital $*$-homomorphism
$$\Delta\,:\,\caut\rightarrow\caut\ot\caut$$
such that $(\id\ot\Delta)(u)=u_{12}u_{23}$ is the compact quantum automorphism group of $(B,\psi)$ and it is denoted by $\aut$.
\end{deff}

In order to describe the intertwiners of a quantum automorphism group by using noncrossing partitions, we need to associate to each non-crossing partition a linear, as it was done in \cite{pit14}.

%\begin{notaz}\label{coeff}
Let $p\in NC(k,l)$ and associate to each point an element of the basis $\mathscr{B}'$. Let $(b_{i_1j_1}^{\alpha_1},...,b_{i_kj_k}^{\alpha_k})$ and $(b_{r_1s_1}^{\beta_1},...,b_{r_ls_l}^{\beta_l})$ be the elements associated to the upper and lower points respectively. 
Let $ij$ and $\alpha$ be the multi-index notation for the different indices, in particular $ij=((i_1,j_1),...,(i_k,j_k))$ and $\alpha=(\alpha_1,...,\alpha_k)$; similarly we define $rs$ and $\beta$.

Denote by $b_v,v=1,...,m$ the blocks of $p$, and let $b_v^\uparrow$ ($b_v^\downarrow$) be the ordered product of the matrix units corresponding to the upper (lower) points of the block $b_v$. This product is assumed to be the identity matrix if there are no upper (lower) points in the block. Define
\begin{equation}\label{coeffd}
\delta_p^{\alpha,\beta}(ij,rs):=\prod_{v=1}^m \psi((b_v^\downarrow)^*b_v^\uparrow)
\end{equation}
%\end{notaz}

\begin{deff}\label{map2}
The linear map corresponding to $p\in NC(k,l)$ is 
$$T_p:B^{\otimes k}\longrightarrow B^{\otimes l},\,\,\,b_{i_1j_1}^{\alpha_1} \otimes...\otimes b_{i_kj_k}^{\alpha_k}\mapsto\sum_{r,s,\beta}\delta_p^{\alpha,\beta}(ij,rs)b_{r_1s_1}^{\beta_1} \otimes...\otimes b_{r_ls_l}^{\beta_l}$$
\end{deff}

The operations between noncrossing partitions introduced in Definition \ref{opdiag} are compatible with the corresponding operations between linear maps.

\begin{prop}[\cite{pit14}]\label{propmap}
Let $p\in NC(l,k)$ and $q\in NC(v,w)$. We have:
\begin{enumerate}
\item $T_{p\otimes q}=T_p\otimes T_q$
\item $T_p^*=T_{p^*}$
\item if $k=v$ there are two possible cases:
\begin{enumerate}
\item[a.] if $\psi$ is a (unital) $\delta$-form, then $T_{qp}=\delta^{-cy(p,q)}T_q T_p$
\item[b.] if $\tpsi$ is the (possibly non unital) 1-form $\delta\psi$, then $T_{qp}=\tpsi(1)^{-cb(p,q)}T_q T_p$
\end{enumerate}
\end{enumerate}
\end{prop}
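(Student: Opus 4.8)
The statement to prove is Proposition \ref{propmap}, which asserts compatibility of the three diagrammatic operations (tensor product, adjoint, composition) with the corresponding operations on linear maps $T_p$.

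\textbf{Plan of proof.}

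My plan is to verify the three items directly from the formulas \eqref{coeffd} and Definition \ref{map2}, treating each separately. For item (1), the tensor product $p\otimes q$ is obtained by horizontal concatenation, so its block set is the disjoint union of the blocks of $p$ and the blocks of $q$, with no interaction between the two groups of points. Evaluating $\delta_{p\otimes q}^{\alpha,\beta}(ij,rs)$ on a tensor of basis elements therefore factorizes as $\delta_p^{\alpha',\beta'}(i'j',r's')\cdot\delta_q^{\alpha'',\beta''}(i''j'',r''s'')$, where the primed and double-primed data are the restrictions to the first $k$ (resp. last $v$) upper points and first $l$ (resp. last $w$) lower points. Matching this with the definition of $T_p\otimes T_q$ on $B^{\otimes k}\otimes B^{\otimes v}$ gives the claim. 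For item (2), I would compute the matrix coefficients of $T_p$ and $T_p^*$ in the orthonormal basis $\mathscr{B}'^{\otimes k}$, $\mathscr{B}'^{\otimes l}$. The coefficient of $T_p$ sending the basis vector indexed by $(\alpha,ij)$ to the one indexed by $(\beta,rs)$ is exactly $\delta_p^{\alpha,\beta}(ij,rs)$; reflecting the diagram $p$ to get $p^*$ swaps the roles of upper and lower points, so each factor $\psi((b_v^\downarrow)^*b_v^\uparrow)$ for a block of $p$ becomes $\psi((b_v^\uparrow)^*b_v^\downarrow) = \overline{\psi((b_v^\downarrow)^*b_v^\uparrow)}$ (using that $\psi(x^*) = \overline{\psi(x)}$ since $\psi$ is a state), which is precisely the condition $\langle T_p \xi, \zeta\rangle = \overline{\langle T_{p^*}\zeta,\xi\rangle}$, i.e. $T_p^* = T_{p^*}$. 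One should be slightly careful that the basis is orthonormal for $\langle x,y\rangle = \psi(y^*x)$, which is why $\mathscr{B}'$ rather than $\mathscr{B}$ is used.

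The main work — and the main obstacle — is item (3), the composition formula. The plan is: given $p\in NC(k,l)$ and $q\in NC(l,w)$ (I will assume the paper means $l=v$ so that composition is defined), compute $(T_q T_p)(b^{\alpha}_{ij})$ by inserting the definition twice; this introduces an intermediate sum over the middle index data $(\gamma, ts)$ attached to the $l$ identified points, namely $\sum_{\gamma,ts}\delta_p^{\alpha,\gamma}(ij,ts)\,\delta_q^{\gamma,\beta}(ts,rs)$. The key combinatorial point is that summing the product of the $\psi$-weights over all choices of matrix units on the middle line, block by block, collapses exactly to the $\psi$-weights of the glued partition $qp$, up to a scalar accounting for (a) the "central blocks" — blocks of $qp$ living entirely on the middle line, each contributing a factor $\psi(\eta(1))$-type normalization, here $\mathrm{Tr}(Q^{\pm})$ depending on convention — and (b) the "cycles" $cy(p,q)$, each closed loop contributing a factor $mm^*$ evaluated via the $\delta$-form condition. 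I would make this precise by the standard device: use that $\{b^{\gamma}_{ts}\}$ is an orthonormal basis, so $\sum_{\gamma,ts} \langle x, b^{\gamma}_{ts}\rangle\langle b^{\gamma}_{ts}, y\rangle = \langle x,y\rangle = \psi(y^*x)$, and more generally resolve each internal string of the composed diagram using this completeness relation. When two middle points are connected both from above (in $p$) and from below (in $q$), the resulting trace-like contraction produces $\psi(1) = $ the normalization constant in case (a), or when a genuine closed loop forms, a factor $mm^*=\delta\,\id_B$ (case a) versus $\tilde\psi(1)\,\id$ (case b) — this is where the two sub-cases (3a) and (3b) diverge, and where the precise bookkeeping of $cy(p,q)$ versus $cb(p,q)$ via the formula $cy(p,q)=l+b(qp)+cb(p,q)-b(p)-b(q)$ must be matched against the number of completeness-relation contractions performed. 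I expect the cleanest write-up to proceed by induction on the number of middle points, or alternatively by a direct block-by-block analysis of $qp$: each block of $qp$ is an equivalence class of points of $p$ and $q$ glued along the middle line, and one checks the weight $\psi((b^\downarrow)^* b^\uparrow)$ of that block in $qp$ equals the corresponding partial sum from $T_q T_p$ after extracting the stated power of $\delta$ (resp. $\tilde\psi(1)$). The delicate step — the genuine obstacle — is correctly tracking the scalar: one must verify that each connected component of the middle line that does \emph{not} reach a top point of $p$ or a bottom point of $q$ is a central block contributing $\delta\psi$-normalization, while each such component that additionally closes into a loop (is connected to itself through multiple middle points) contributes an extra $\delta^{-1}$ (case a) — and the arithmetic that $cy(p,q)$ counts exactly these loops. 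For this I would rely on the detailed combinatorial analysis of cycles in \cite{pit14} and reduce the general case to the one-middle-point and two-middle-point local moves.

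Finally, I note that items (1) and (2) hold for an arbitrary faithful state $\psi$ with no normalization constant, whereas item (3) genuinely requires the $\delta$-form hypothesis (case a) or the renormalized $1$-form $\tilde\psi$ (case b); without such a hypothesis the contraction $mm^*$ is not a scalar multiple of the identity and the composed map need not be a multiple of $T_{qp}$ at all. This explains the bifurcation in the statement.
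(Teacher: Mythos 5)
The paper itself does not prove this proposition: it is imported verbatim from \cite{pit14}, and the only original content is the remark that assertion $(3b)$ (absent from \cite{pit14}) follows by the same techniques as $(3a)$, with the correction factor now governed by $\tilde{\psi}(1)$ rather than $\delta$ because $\tilde{\psi}$ is a non-unital $1$-form. So there is no in-paper argument to compare against; what can be said is that your outline reproduces the approach actually used in \cite{pit14}. Your treatments of items $(1)$ and $(2)$ are correct and essentially complete (the factorization of $\delta_{p\otimes q}$ over the disjoint blocks, and the identity $\delta_{p^*}^{\beta,\alpha}(rs,ij)=\overline{\delta_p^{\alpha,\beta}(ij,rs)}$ via $\psi(x^*)=\overline{\psi(x)}$ together with the reversal of ordered products under $*$). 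For item $(3)$ you correctly identify the mechanism — resolve the middle line by the completeness relation for the orthonormal basis $\mathscr{B}'$, and account separately for central blocks and for closed cycles — but the decisive combinatorial verification that the resulting scalar is exactly $\delta^{cy(p,q)}$ (resp.\ $\tilde{\psi}(1)^{cb(p,q)}$) is only described, and you explicitly defer it to \cite{pit14}; since the paper does the same, this is not a defect relative to the source, but it is the entire content of the statement. One small internal inconsistency to fix: you first (correctly) say each closed loop contributes a factor $mm^*=\delta\,\id_B$, but later write that such a component "contributes an extra $\delta^{-1}$"; since the claim is $T_{qp}=\delta^{-cy(p,q)}T_qT_p$, i.e.\ $T_qT_p=\delta^{cy(p,q)}T_{qp}$, each cycle must contribute $\delta^{+1}$ to the composite $T_qT_p$, matching your first statement. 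Likewise, in case $(a)$ a central block contributes $\psi(1)=1$ (which is why only cycles appear in the exponent), whereas in case $(b)$ loops contribute $m m^*_{\tilde{\psi}}=\id_B$ and central blocks contribute $\tilde{\psi}(1)$; keeping these two bookkeepings cleanly separated is exactly the delicate point you flag.
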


The assertion $(3b)$ is not included in the formulation of this proposition presented in \cite{pit14}. It can be proved with the same techniques used to show the assertion $(3a)$. In this case, the correction coefficient depends on $\tpsi(1)$ because, in general, $\tpsi$ is not unital, while the dependence on $\delta$ disappears because it is a 1-form.

The space of intertwiners between tensor products of the fundamental representation of the quantum automorphism group can be described as follows.

\begin{theorem}[\cite{pit14}]\label{intertautb}
Let $B$ be a $n$-dimensional C*-algebra, $n\geq 4$ and $u$ be the fundamental representation of $\aut$. Then, for all $k,l\in\N$, $\Hom(u^{\otimes k},u^{\otimes l})=span\{T_p\,|\,p\in NC(k,l)\}$. Moreover, maps corresponding to distinct noncrossing partitions are linearly independent.
\end{theorem}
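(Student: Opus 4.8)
Here is how I would attack Theorem~\ref{intertautb}. We work under the standing assumption — implicit in any use of Proposition~\ref{propmap}(3a) — that $\psi$ is a $\delta$-form. The plan has three parts: a Tannaka--Krein reconstruction step, a combinatorial identification step, and a Gram-matrix step for the linear independence; the last of these is where the hypothesis $\dim B\geq 4$ is genuinely needed.

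First I would check that $\mathcal{C}(k,l):=\mathrm{span}\{T_p:p\in NC(k,l)\}$, with $B^{\otimes k}$ (for the scalar product induced by $\psi$) the object attached to $k$, is a concrete rigid C*-tensor category: closure under tensor product, adjoint and composition is precisely Proposition~\ref{propmap}, the scalar factors $\delta^{-cy(p,q)}$ being nonzero and hence irrelevant for membership in a linear span; the identities are the $T_p$ of the identity partitions; and rigidity holds because, up to nonzero scalars, the maps $T_p$ attached to the pair partitions $\cup\in NC(0,2)$ and $\cap\in NC(2,0)$ solve the conjugate equations, exhibiting the fundamental representation as its own conjugate. Woronowicz's Tannaka--Krein reconstruction theorem \cite{wor88} then produces a compact quantum group $\mathbb{G}_0$ with a generating unitary $u_0\in\mathcal{L}(B)\otimes C(\mathbb{G}_0)$ and $\Hom_{\mathbb{G}_0}(u_0^{\otimes k},u_0^{\otimes l})=\mathcal{C}(k,l)$ for all $k,l$. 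To identify $\mathbb{G}_0$ with $\aut$, a direct computation from Definition~\ref{map2}, using that $\mathscr{B}'$ is orthonormal, gives $T_{m_3}=m$ for $m_3\in NC(2,1)$ the one-block partition on three points and $T_\downarrow=\eta$ for $\downarrow\in NC(0,1)$ the singleton (one also gets $T_\cup=m^*\eta$). Hence $m\in\Hom_{\mathbb{G}_0}(u_0^{\otimes 2},u_0)$ and $\eta\in\Hom_{\mathbb{G}_0}(1,u_0)$, so the universal property defining $\caut$ (Definition~\ref{autb}) yields a surjective $*$-homomorphism $\caut\to C(\mathbb{G}_0)$ intertwining the comultiplications, whence $\Hom_{\aut}(u^{\otimes k},u^{\otimes l})\subseteq\mathcal{C}(k,l)$. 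For the reverse inclusion, since $m$ and $\eta$ are intertwiners of $\aut$ and the $\Hom_{\aut}$-spaces form a C*-tensor category, it suffices to know that every $T_p$ is obtained from $\id_B$, $m$, $\eta$ by tensor products, compositions and adjoints; this is the elementary combinatorial fact (see e.g.\ \cite{bs09}) that $NC$, as a tensor category of partitions closed under adjoints, is generated by the identity partition, the pair partition $\cup$ and the triple block $m_3$ — iterating $m_3\circ(m_3\otimes\id)$ builds a single block of arbitrary size, $m_3\circ\cup$ builds the singleton, and tensoring with cups and caps glues blocks together. This proves $\Hom_{\aut}(u^{\otimes k},u^{\otimes l})=\mathrm{span}\{T_p:p\in NC(k,l)\}$.

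The third part, the linear independence, is the heart of the matter. Using Frobenius reciprocity (Theorem~\ref{teofrobrec}) and the self-duality of the fundamental representation, I would pass to a linear bijection $\Hom(u^{\otimes k},u^{\otimes l})\cong\Hom(1,u^{\otimes(k+l)})$ sending $T_p$, up to sign, to $T_{\tilde p}$ for the rotated partition $\tilde p\in NC(0,k+l)$ (the rotation being a bijection $NC(k,l)\to NC(0,k+l)$), thereby reducing the claim to the linear independence of the vectors $\xi_{\tilde p}:=T_{\tilde p}(1)\in B^{\otimes(k+l)}$, i.e.\ to the nonsingularity of the Gram matrix $G=(\langle\xi_{\tilde p},\xi_{\tilde q}\rangle)$. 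By Proposition~\ref{propmap}(2)--(3) one computes $\langle\xi_{\tilde p},\xi_{\tilde q}\rangle$ to be, up to a fixed positive normalization, a power of $\delta$ determined by the join $\tilde p\vee\tilde q$ in the lattice $NC(k+l)$ — the classical noncrossing-partition Gram matrix with parameter $\delta$, independent of the individual $Q_T$'s. The hard part is then to show $\det G\neq 0$: ordering $NC(k+l)$ by refinement and applying Möbius inversion exhibits $\det G$ as a product of quantum integers in the parameter $q$ with $q+q^{-1}=\sqrt{\delta}$, which are all nonzero exactly when $\delta\geq 4$; since $\delta\geq\dim B\geq 4$ for a $\delta$-form on a $B$ of dimension at least $4$, this gives the result. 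Alternatively one could invoke the monoidal equivalence \cite{bdrv06,bb07} of $\aut$, for $\psi$ a $\delta$-form with $\delta\geq 4$, with an $SO_q(3)$-type quantum group whose Hom-spaces between tensor powers of its fundamental representation are classically of dimension $|NC(k,l)|$, together with the invariance of Hom-dimensions under monoidal equivalence.

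The soft parts — that the $T_p$ span, and that distinct partitions at least give distinct maps — fall out formally from Tannaka--Krein and the combinatorics of $NC$. The genuine obstacle is the Gram-determinant computation and the verification that $\dim B\geq 4$ is the sharp threshold (linear independence fails for $\dim B\in\{1,2,3\}$, where $\aut$ collapses onto a smaller quantum group).
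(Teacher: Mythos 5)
The paper does not actually prove Theorem \ref{intertautb}: it is imported from \cite{pit14}, where the argument goes through Banica's Temperley--Lieb description of $\Hom(u^{\ot k},u^{\ot l})$ from \cite{ban99a} together with the ``fattening'' bijection $TL_x(2k,2l)\to NC_{x^2}(k,l)$ --- the very map $\rho$ that reappears in the proof of Theorem \ref{teomoneq} --- so that both the spanning statement and the linear independence are inherited from the Temperley--Lieb picture. Your route is genuinely different and essentially self-contained: Tannaka--Krein reconstruction plus the generation of $NC$ by the identity, the cup and the fork gives the equality of the two spaces, and the linear independence is reduced, via rotation, to the nonvanishing of the noncrossing-partition Gram determinant. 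The individual steps are sound: rigidity via $T_\cup$, the identification of the reconstructed quantum group with $\aut$ by playing the two universal properties against each other, the computation $\langle T_p(1),T_q(1)\rangle=\delta^{\,(k+l)+b(p\vee q)-b(p)-b(q)}$ (which indeed depends only on $\delta$ and the join), and the inequality $\delta\ge\dim B\ge 4$, which follows from $\Tr(Q_T)\Tr(Q_T^{-1})\ge n_T^2$ and $\sum_T\Tr(Q_T)=1$ --- the same arithmetic as the lemma embedded in the proof of Theorem \ref{teomoneq}. What your approach buys is independence from \cite{ban99a}; what it costs is that the whole weight now rests on the Gram determinant formula, which you invoke rather than prove. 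That formula (a Di Francesco/Tutte-type product expansion, or equivalently the Temperley--Lieb Gram determinant transported through fattening) is a genuine external input and should be cited as such; as written, ``applying M\"obius inversion exhibits $\det G$ as a product of quantum integers'' is a placeholder for the hardest computation in the whole theorem. Two further imprecisions, neither fatal: Frobenius reciprocity sends $T_p$ to $T_{\tilde p}$ up to a nonzero power of $\delta$, not ``up to sign''; and the quantum integers are all nonzero \emph{whenever} $\delta\ge 4$, not ``exactly when'' (for generic $\delta<4$ they are nonzero too --- only the forward implication is needed). Finally, you are right to flag that the statement is implicitly restricted to $\delta$-forms: otherwise $mm^*=T_{m_3}T_{m_3}^*$ is an endomorphism of $u$ that need not lie in $\mathrm{span}\{T_p\,:\,p\in NC(1,1)\}$, and the theorem as literally stated would fail.
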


%%%%%%%%%%%%%%%%%%%%%%%%%%%%%%%%%%%%%%%%%%%%%%%%%%%%%%%%
\section{Definition of the free wreath product $\wprgd$}
%%%%%%%%%%%%%%%%%%%%%%%%%%%%%%%%%%%%%%%%%%%%%%%%%%%%%%%%
In this section we define the main object of this paper: the free wreath product of a compact quantum group by the quantum automorphism group of a finite quantum graph. The definition is based on the same idea already used in the case of the dual of a discrete group (see \cite{pit14}) but it needs to be adapted to this new context. In the case of the free wreath product by the quantum automorphism group $\aut$, we will describe the spaces of intertwiners by means of specially decorated noncrossing partitions. This will be fundamental in order to prove a monoidal equivalence result, from which the fusion rules and some other properties will be deduce.

First of all we introduce the notion of finite quantum graph. We will then define the quantum automorphism group of such a quantum graph as a quotient of $\aut$.

\begin{deff}
Let $B$ be a finite dimensional C*-algebra endowed with a faithful state $\psi$. Let $d\in\mathcal{L}(B)$. The triple $(B,\psi ,d)$ will be called a finite quantum graph.
\end{deff}

\begin{example}\label{ExClassicalGraph}
When $X$ is a classical finite graph, with say $n$-vertices, we get a finite quantum graph $(\C^n,\psi,d)$, where $\psi(x)=\frac{1}{n}\sum_ix_i$, for $x=(x_1,\dots,x_n)\in\C^n$, and $d\in M_n(\C)$ is the adjacency matrix.
\end{example}

\begin{prop}
Let $(B,\psi ,d)$ be a finite quantum graph and $\cautd$ be the universal unital C*-algebra generated by the coefficients of $u_d\in\mathcal{L}(B)\ot \cautd,$ with relations
$$ (i)\,\,u_d\text{ is unitary }\quad (ii)\,\,m\in \Hom(u_d^{\otimes 2},u_d)\quad (iii)\,\,\eta\in \Hom(1,u_d)\quad (iv)\,\,d\in \End(u_d).$$
There exists a unique unital $*$-homomorphism
$$\Delta_d\,:\,\cautd\rightarrow \cautd\ot \cautd\text{ such that }(\id\ot\Delta_d)(u_d)=(u_d)_{12}(u_d)_{13}.$$
Moreover the pair $(\cautd,\Delta_d)$ is a compact quantum group, called the quantum automorphism group of the finite quantum graph $(B,\psi,d)$ and it is denoted $\autd$.\end{prop}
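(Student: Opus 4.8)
The plan is to mimic the standard construction of the quantum automorphism group $\aut$ (Definition~\ref{autb}), which is the case $d=0$ (or $d=\id$), and simply carry the extra relation $(iv)$ along. First I would note that the universal C*-algebra $\cautd$ exists: the generators are the coefficients $(u_d)_{ij}$ of $u_d\in\mathcal{L}(B)\ot\cautd$ with respect to a fixed orthonormal basis of $B$, and all four families of relations $(i)$--$(iv)$ are relations of the form ``a fixed $*$-polynomial in the $(u_d)_{ij}$ equals $0$''. Since $u_d$ is required unitary, each $(u_d)_{ij}$ has norm $\le 1$, so the universal enveloping C*-norm is finite and $\cautd$ is a well-defined unital C*-algebra. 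Equivalently, one can simply observe that $\cautd$ is the quotient of $\caut$ by the closed two-sided ideal generated by the coefficients of $(d\ot 1)u - u(d\ot 1)$, where $u$ is the fundamental representation of $\aut$; this immediately gives existence and a canonical surjection $\caut\twoheadrightarrow\cautd$.

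Next I would construct $\Delta_d$. Consider the element $w:=(u_d)_{12}(u_d)_{13}\in\mathcal{L}(B)\ot(\cautd\ot\cautd)$. The claim is that $w$ satisfies the four defining relations of $\cautd$ with $\cautd$ replaced by $\cautd\ot\cautd$, so that by the universal property there is a unique unital $*$-homomorphism $\Delta_d:\cautd\to\cautd\ot\cautd$ with $(\id\ot\Delta_d)(u_d)=w$. Checking the relations for $w$ is routine: $w$ is a product of two unitaries hence unitary, giving $(i)$; for $(ii)$, since $m\in\Hom(u_d^{\ot 2},u_d)$ one has $(m\ot 1\ot 1)(u_d)_{12}(u_d)_{13}(u_d)_{14}(u_d)_{15}=w(m\ot 1\ot 1)$ after rearranging legs, i.e. $m\in\Hom(w^{\ot 2},w)$; $(iii)$ is similar and even easier; and $(iv)$ holds because $d\ot 1\ot 1$ commutes with each of $(u_d)_{12}$ and $(u_d)_{13}$ (as $d\in\End(u_d)$), hence with their product $w$. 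Coassociativity $(\Delta_d\ot\id)\Delta_d=(\id\ot\Delta_d)\Delta_d$ then follows because both sides, composed with $\id\ot(\cdot)$, send $u_d$ to $(u_d)_{12}(u_d)_{13}(u_d)_{14}$, and the coefficients of $u_d$ generate $\cautd$.

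Finally I would verify that $(\cautd,\Delta_d)$ is a compact quantum group in the sense of Definition~\ref{cqg2}, using the single generating unitary $u_d$ as the family $(u^\alpha)_{\alpha\in I}$. The first two bullets are immediate: the coefficients of $u_d$ generate $\cautd$ by construction, and $(\id\ot\Delta_d)(u_d)=(u_d)_{12}(u_d)_{13}$ by construction of $\Delta_d$. The only real point is the third bullet: $\widetilde{u_d}$ must be invertible. Here I would invoke Remark~\ref{RmkInvertible}: it suffices to exhibit a unitary $v\in\mathcal{L}(K)\ot\cautd$ with $\dim(K)=\dim(B)$ and a non-degenerate $t\in\Hom(1,v\ot u_d)$. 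Take $v=u_d$ itself (so $K=B$) and $t=s_{u_d}$, the isometry $\C\to B\ot B$, $z\mapsto z\,\frac{1}{\dim B}\sum_i e_i\ot e_i^*$ adapted to $B$; one must check $s_{u_d}\in\Hom(1,u_d\ot u_d)$ — equivalently $u_d\ot u_d$ fixes this vector — and that it is non-degenerate, which it is since $\{(\id\ot\xi^*)(s_{u_d}(1)):\xi\in B\}=B$. For the intertwining property, since $(B,\psi)$ carries the standard duality morphisms, one uses that $m\in\Hom(u_d^{\ot 2},u_d)$ and $\eta\in\Hom(1,u_d)$ already force $\widetilde{u_d}$ to be invertible in the $d=0$ case (this is exactly why $\aut$ is a compact quantum group, established in \cite{wan98, ban99a, pit14}); adding relation $(iv)$ only passes to a quotient and does not destroy invertibility, since the relevant intertwiner $Q$ from Remark~\ref{RmkInvertible} is the image under $\caut\to\cautd$ of the one already constructed for $\aut$. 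I expect this last point — making sure $\widetilde{u_d}$ stays invertible after imposing $(iv)$ — to be the only step requiring care, and it is handled cleanly by the quotient description $\cautd=\caut/\langle d\text{-relations}\rangle$ together with Remark~\ref{RmkInvertible}.
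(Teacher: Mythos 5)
Your proof is correct and follows essentially the same route as the paper: realize $\cautd$ as the quotient $\caut/I_d$, obtain $\Delta_d$ from the commutation $(d\ot 1\ot 1)(u_d)_{12}(u_d)_{13}=(u_d)_{12}(u_d)_{13}(d\ot 1\ot 1)$ (the paper factorizes $\Delta$ through the quotient, you invoke the universal property of $\cautd$ directly --- these amount to the same argument), and observe that the compact quantum group axioms are inherited from $\aut$. One small correction: your first candidate for the non-degenerate intertwiner, $s_{u_d}$ with $s_{u_d}(1)=\frac{1}{\dim B}\sum_i e_i\ot e_i^*$, lives in $B\ot\overline{B}$ and belongs to $\Hom(1,u_d\ot\widetilde{u_d})$ rather than $\Hom(1,u_d\ot u_d)$, so it cannot be fed into Remark~\ref{RmkInvertible}; the correct choice is $t=m^*\circ\eta\in\Hom(1,u_d\ot u_d)$, which is precisely the image under $\caut\to\cautd$ of the intertwiner already used for $\aut$, so the fallback you describe at the end does go through.
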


\begin{proof}
The uniqueness of $\Delta_d$ being obvious, let us show the existence. Let $u\in\mathcal{L}(B)\ot\caut$ be the fundamental representation of $\aut$ and $I_d\subset \caut$ be the closed two sided $\ast$-ideal generated by the relation $d\in\End(u)$ so that $\cautd=\caut/I_d$. Let $\pi_d:\caut\rightarrow \caut /I_d$ be the canonical surjection so that $u_d=(\id\ot\pi_d)(u)$. Recall that $(\id_B\ot\Delta)(u)=u_{12}u_{13}$ where $u$ is the fundamental representation of $\aut$. Note that $I_d\subset\ker((\pi_d\ot\pi_d)\circ\Delta)$ since we have the following equation
$$(d\ot 1^{\ot 2})u_{12}u_{13}=u_{12}(d\ot 1^{\ot 2})u_{13}=u_{12}u_{13}(d\ot 1^{\ot 2}).$$
The inclusion above means that $\Delta$ factorizes to a map $\Delta_d$ such that $\Delta_d\circ\pi_d=(\pi_d\ot\pi_d)\circ\Delta$. It is then clear that $\Delta_d$ satisfies the required condition and that $(\cautd,\Delta_d)$ is a compact quantum group.
\end{proof}

\begin{example}
If $(B,\psi ,d)$ is the finite quantum graph associated to a classical finite graph $X$ as in Example \ref{ExClassicalGraph} then $\autd$ is the quantum automorphism group of $X$ (see \cite{bb07,cha15}). More generally, when $X$ is a finite metric space and $B=C(X)$, $\psi(f)=\frac{1}{\vert X\vert}\sum_{x\in X}f(x)$ and $d\in M_{\vert X\vert}(\C)$ is the matrix defined by $d_{ij}=d(i,j)$, where $d$ is the metric on $X$, then $\autd$ is the quantum automorphism group of $X$ defined in \cite{ban05}.
\end{example}

\begin{remark}\label{rem:trivd}
Let $u_k\in\mathcal{L}(H_k)\ot \caut$ be a complete set of representative of $\irr(\aut)$ such that $u_0=1$ and $u=u_0\oplus u_1$. Let $S:H_1\longrightarrow B$ be the unique isometry, up to $\mathbb{S}^1$. Recall that the orthogonal projections $\eta_B\eta_B^*$ and $SS^*$ satisfy $\eta_B\eta_B^*+SS^*=\id_B$ and $\End(u)=\C\eta_B\eta_B^*+\C SS^*$. Hence, whenever $d\in span\{\eta_B\eta_B^*,SS^*\}$, we have $\autd=\aut$.
\end{remark}

\begin{remark}
If $d\in\mathcal{L}(B)$ is normal, by writing $d=\sum_i\lambda_i p_i$ the spectral decomposition of $d$, where $p_i$ are the spectral projections, it is easy to check that the ideal generated by the relations $d\in\End(u)$ is equal to the ideal generated by the relations $p_i\in\End(u)$ for all $i$.
\end{remark}

\begin{prop}\label{intertwinersautd}
Let $V_d(k,l)\subset\mathcal{L}(B^{\ot k},B^{\ot l})$ be the set of linear combinations of composable products of maps of the form $\id_B^{\ot s}\ot   m_B\ot\id_B^{\ot r}$, $\id_B^{\ot s}\ot   m_B^*\ot\id_B^{\ot r}$, $\id_B^{\ot s}\ot   \eta_B\ot\id_B^{\ot r}$, $\id_B^{\ot s}\ot   \eta_B^*\ot\id_B^{\ot r}$, $\id_B^{\ot s}\ot   d\ot\id_B^{\ot r}$ and $\id_B^{\ot s}\ot   d^*\ot\id_B^{\ot r}$ which are in $\mathcal{L}(B^{\ot k},B^{\ot l})$.
Then, for all $k,l\in\N$, we have
$$\Hom(u_d^{\ot k},u_d^{\ot l})=V_d(k,l).$$
\end{prop}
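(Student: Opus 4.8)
The plan is to prove both inclusions. The inclusion $V_d(k,l)\subseteq\Hom(u_d^{\ot k},u_d^{\ot l})$ is the easy direction: by definition of $\autd$ (Definition of the quotient, relations $(i)$--$(iv)$), the maps $m_B$, $\eta_B$, $d$ all intertwine the appropriate tensor powers of $u_d$; taking adjoints of unitary intertwiners gives intertwiners in the opposite direction (so $m_B^*,\eta_B^*,d^*$ work too), and the set of intertwiners is stable under tensoring with identities, under composition and under linear combinations. Hence every element of $V_d(k,l)$ is an intertwiner. I would state this first and dispatch it in a couple of lines.

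For the reverse inclusion, the main point is that $\Hom(u_d^{\ot k},u_d^{\ot l})$ should be no bigger than the concrete category generated by $m_B,\eta_B,d$ and their adjoints. The natural strategy is a Tannaka--Krein / universality argument. Consider the collection $\cc$ of spaces $V_d(k,l)$ as a concrete rigid C*-tensor category with a single generating object $B$: one checks it contains the identities and the duality morphisms (these come from $m_B,\eta_B$ exactly as for $\aut$ — note $\eta_B^*\eta_B\ne 0$ and $m_B m_B^*$ is invertible since $\psi$ is faithful, giving conjugates for $B$), and it is by construction closed under tensor product, composition and adjoints. By Woronowicz's Tannaka--Krein duality, there is a compact quantum group $\G_0$ with fundamental representation $v\in\mathcal{L}(B)\ot C(\G_0)$ whose intertwiner spaces $\Hom(v^{\ot k},v^{\ot l})$ are \emph{exactly} $V_d(k,l)$. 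In particular $v$ is unitary and $m_B\in\Hom(v^{\ot 2},v)$, $\eta_B\in\Hom(1,v)$, $d\in\End(v)$, so by the universal property in the Proposition defining $\cautd$ there is a surjective morphism $\cautd\to C(\G_0)$ intertwining the fundamental representations. This yields $\Hom(u_d^{\ot k},u_d^{\ot l})\subseteq\Hom(v^{\ot k},v^{\ot l})=V_d(k,l)$, which is the inclusion we want.

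The hard part is verifying that $\cc=\{V_d(k,l)\}$ genuinely satisfies all the axioms needed to apply Tannaka--Krein — in particular that it is closed under the operations (closure under tensor and composition is immediate from the definition of $V_d$ as linear spans of composable products of the elementary maps, and closure under adjoints follows because the generating set is symmetric under $T\mapsto T^*$) and that it contains enough duality morphisms to be rigid. One has to be slightly careful that the generating maps really are bounded operators between the stated finite-dimensional spaces and that the solution of the conjugate equations for $B$ lies inside $V_d$; both are inherited from the corresponding facts for $\aut$ recalled before Definition of $\autb$, since $d$ only adds morphisms and never removes any. An alternative, more hands-on route avoiding Tannaka--Krein would be to mimic the proof of Theorem on the intertwiners of $\aut$: realize $u_d^{\ot k}$ inside $u^{\ot k}$ after passing to the quotient $\pi_d$, use Theorem~\ref{intertautb} to write a general element of $\Hom(u_d^{\ot k},u_d^{\ot l})$ as $(\id\ot\pi_d)$ applied to a linear combination of $T_p$'s, and then show that imposing $d\in\End(u_d)$ forces such combinations to collapse into $V_d(k,l)$. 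I would prefer the Tannaka--Krein argument as cleaner, and flag the rigidity/closure check as the only real content.
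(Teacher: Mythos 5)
Your proposal is correct and follows essentially the same route as the paper: the easy inclusion $V_d(k,l)\subseteq\Hom(u_d^{\ot k},u_d^{\ot l})$ from the defining relations of $\autd$, and the reverse inclusion via Tannaka--Krein duality applied to the concrete rigid C*-tensor category with morphism spaces $V_d(k,l)$, followed by the universal property of $\cautd$. Your concluding step is in fact slightly leaner than the paper's (you only need the morphism $\cautd\to C(\G_0)$ to transport intertwiners, whereas the paper exhibits surjections in both directions and checks they are mutually inverse), and your flagging of rigidity/closure as the real content to verify matches what the paper leaves implicit.
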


\begin{proof}
The first inclusion ($\supseteq$) easily follows by observing that all the maps which generate $V_d(k,l)$ are morphisms by Definition \ref{deffwpg}. For the second inclusion ($\subseteq$) we need to use the Tannaka-Krein duality (see \cite{wor88}). Let $\cc$ be the category such that $Ob(\cc)=\N$ and $\Hom(k,l)=V_d(k,l)$. This category is essentially the category of the noncrossing partitions with some additional morphisms. It is then clear that $\cc$ is a concrete rigid monoidal C*-category. By the Tannaka-Krein duality we know that there exists a compact quantum group $\G=(C(\G),\Delta)$ with fundamental representation $v$ and such that $\Hom(v^{\ot k},v^{\ot l})=V_d(k,l)$. Because of the universality of the Tannaka-Krein construction, from the inclusion already proved it follows that there is a surjective map $\phi:C(\G)\longrightarrow \autd$ such that $(id_B\ot\phi)(v)=u_d$. In order to complete the proof we have to show that the map is an isomorphism. This follows from the universality of the  construction of $\autd$ after observing that the matrix $v$ is unitary and that $m\in\Hom(v^{\ot 2},v)$, $\eta\in\Hom(1,v)$ and $d\in\End(v)$ because of the definition of $V_d(k,l)$.
\end{proof}

We can now define the free wreath product of a compact quantum group by the quantum automorphism group of a finite quantum graph. Let $\G$ be a compact quantum group and $(B,\psi ,d)$ be a finite quantum graph. Recall that for each $\alpha\in \irr(\G)$, we have choosen a representative $u^\alpha\in\mathcal{L}(H_\alpha)\ot C(\G)$.

\begin{deff}\label{deffwpg}
Define $\wprgdc$ to be the universal unital C*-algebra generated by the coefficients of $a(\alpha)\in \mathcal{L}(B\otimes H_\alpha)\otimes (\wprgdc)$, $\alpha\in\irr(\G)$, with the relations:
\begin{itemize}
\item $a(\alpha)$ is unitary for any $\alpha\in \irr(\G)$.
\item $\forall \alpha,\beta,\gamma\in \irr(\G)$, $\forall S\in \Hom(\alpha\otimes\beta,\gamma)$\quad $$\widetilde{m\otimes S}:=(m\otimes S)\circ \Sigma_{23} \in\Hom (a(\alpha )\otimes a(\beta),a(\gamma))$$ where $\Sigma_{23}: B\otimes H_{\alpha}\otimes B \otimes H_{\beta}\longrightarrow B^{\otimes 2}\otimes (H_{\alpha}\otimes H_{\beta}), x_1\otimes x_2\otimes x_3 \otimes x_4 \mapsto x_1\otimes x_3\otimes x_2 \otimes x_4$ is the unitary map that exchanges the legs 2 and 3 in the tensor product.
\item $\eta\in\Hom (1,a(1_\G))$, where $1$ is the unity of $\wprgdc$ and $1_\G$ denote the trivial representations of $\G$.
\item $d\in\End(a(1_{\G}))$.
\end{itemize}
\end{deff}

\begin{remark}
When $\G=\widehat{\Gamma}$ is the dual of a discrete group $\Gamma$ all the irreducible representations of $\widehat{\Gamma}$ are one dimensional; therefore, the morphisms $S$ can be ignored, since they are scalar multiples of $\id_{\C}$ and we recover the original definition of free wreath product C*-algebra from \cite{pit14}.
\end{remark}

\begin{prop}\label{deffwp}
There exists a unique $\ast$-homomorphism $$\Delta:\wprgdc\longrightarrow(\wprgdc)\otimes(\wprgdc)$$ such that, for any $\alpha\in\irr(\G)$, $(\id\ot\Delta)(a(\alpha))=a(\alpha)_{(12)}a(\alpha)_{(13)}$. The pair $(\wprgdc,\Delta)$ is a compact quantum group, called the free wreath product of $\G$ by $\autd$ and denoted $\wprgd$ or $\wprhgd$. For all $\alpha\in\irr(\G)$, $a(\alpha)$ is a representation of $\wprhgd$, its contragredient representation is $a(\overline{\alpha})$ and dim$_q(a(\alpha))=$ dim$_q(\alpha)\sum_{T}$Tr$(Q_T)$Tr$(Q_T^{-1})$. Moreover, $\wprhgd$ is of Kac type if and only if $\G$ is of Kac type and $\psi$ is a trace.
\end{prop}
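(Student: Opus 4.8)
The plan is to establish each assertion of the proposition in turn, relying on the standard machinery of Woronowicz's Tannaka--Krein duality together with the explicit generators of $C(\wprgd)$.

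\textbf{Existence of $\Delta$ and the compact quantum group structure.} First I would construct $\Delta$ by the universal property of $C(\wprgd)$. One checks that the elements $a(\alpha)_{(12)}a(\alpha)_{(13)}\in\mathcal{L}(B\ot H_\alpha)\ot C(\wprgd)\ot C(\wprgd)$ are unitaries satisfying the same four families of relations defining $C(\wprgd)$; this is a routine verification using that $u^\alpha\mapsto u^\alpha_{12}u^\alpha_{13}$ are representations of $\G$ and that $m$, $\eta$, $d$, and the $\widetilde{m\ot S}$ are fixed by the relevant coactions (for the intertwiner relation one uses that $S$ solves the relevant intertwining identity for $\G$ on the nose, and $\Sigma_{23}$ is a fixed unitary). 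Hence there is a unique $*$-homomorphism $\Delta$ with $(\id\ot\Delta)(a(\alpha))=a(\alpha)_{(12)}a(\alpha)_{(13)}$; coassociativity follows from uniqueness applied to both $(\Delta\ot\id)\circ\Delta$ and $(\id\ot\Delta)\circ\Delta$. To see that $(C(\wprgd),\Delta)$ is a compact quantum group in the sense of Definition \ref{cqg2}, the only nontrivial point is the invertibility of $\widetilde{a(\alpha)}$ for each $\alpha$. Here I would invoke Remark \ref{RmkInvertible}: one exhibits an explicit non-degenerate $t\in\Hom(1,a(\overline{\alpha})\ot a(\alpha))$ built from $\eta$ and from $s_\alpha\in\Hom(1_\G,\alpha\ot\overline{\alpha})$, using the defining intertwiner relations of $C(\wprgd)$ to check that $t$ is indeed an intertwiner and that it is non-degenerate because $\eta$ and $s_\alpha$ are non-degenerate. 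The density of the $*$-algebra generated by the coefficients of the $a(\alpha)$ is immediate since these generate $C(\wprgd)$ by definition.

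\textbf{Contragredient and quantum dimension.} From the construction of $t$ in the previous step one reads off that $\overline{a(\alpha)}$ is equivalent to $a(\overline{\alpha})$: indeed $\Hom(1,a(\overline{\alpha})\ot a(\alpha))\neq 0$ and symmetrically $\Hom(1,a(\alpha)\ot a(\overline{\alpha}))\neq 0$, which characterizes the contragredient uniquely up to equivalence. For the quantum dimension, recall $\dim_q(a(\alpha))=\Tr(R_{a(\alpha)})$ where $R_{a(\alpha)}=J^*J$ for the antilinear map $J$ associated to the normalized solution of the conjugate equations on $B\ot H_\alpha$. The point is that the conjugate map on $B\ot H_\alpha$ factors as (the conjugate data on $B$ for the representation $a(1_\G)$, which here is just the standard modular data of $(B,\psi)$) tensored with (the conjugate data $J_\alpha$ for $\alpha$ in $\G$); concretely, $t$ is built as $\Omega_B\ot s_\alpha$ up to the leg-exchange, so $R_{a(\alpha)}=R_B\ot R_\alpha$ where $R_B\in\mathcal{L}(B)$ is the operator for the fundamental representation of $\aut$. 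Since it is classical (and already used implicitly for $\aut$) that $\Tr(R_B)=\sum_T\Tr(Q_T)\Tr(Q_T^{-1})$ — this is exactly $\Vert\eta(1)\Vert^2$-type bookkeeping with the blocks — one gets $\dim_q(a(\alpha))=\dim_q(\alpha)\sum_T\Tr(Q_T)\Tr(Q_T^{-1})$ by multiplicativity of the trace over the tensor product.

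\textbf{Kac type.} Finally, $\wprgd$ is of Kac type iff $J_{a(\alpha)}$ is antiunitary for every $\alpha$, equivalently iff $\dim(a(\alpha))=\dim_q(a(\alpha))$ for every $\alpha$, equivalently (since both sides are multiplicative) iff $\dim(\alpha)=\dim_q(\alpha)$ for all $\alpha\in\irr(\G)$ and $\dim(B)=\sum_T\Tr(Q_T)\Tr(Q_T^{-1})$. The first condition says exactly that $\G$ is of Kac type. The second, by Cauchy--Schwarz applied block by block to $\Tr(Q_T)\Tr(Q_T^{-1})\geq n_T^2$ with equality iff $Q_T$ is a scalar, holds iff each $Q_T$ is a scalar multiple of the identity, i.e. iff $\psi$ is a trace. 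Combining gives the stated equivalence. The step I expect to be the main obstacle is the careful construction and non-degeneracy check of the intertwiner $t\in\Hom(1,a(\overline{\alpha})\ot a(\alpha))$: one must manipulate the leg-exchange $\Sigma_{23}$ and the definition of $\widetilde{m\ot S}$ correctly so that the element assembled from $\eta$ and $s_\alpha$ genuinely intertwines $1$ and $a(\overline\alpha)\ot a(\alpha)$, and everything else — invertibility of $\widetilde{a(\alpha)}$, identification of the contragredient, and the quantum dimension formula — then follows from this one computation.
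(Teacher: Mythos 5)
Your proposal follows essentially the same route as the paper: $\Delta$ is obtained from the universal property by checking the defining relations on $a(\alpha)_{(12)}a(\alpha)_{(13)}$, the invertibility of $\widetilde{a(\alpha)}$ is obtained via Remark \ref{RmkInvertible} from a non-degenerate $t_{\overline{\alpha}}=(\widetilde{m\ot s_{\overline{\alpha}}^*})^*\circ\eta\in\Hom(1,a(\overline{\alpha})\ot a(\alpha))$ whose explicit expansion also identifies the contragredient, and the quantum dimension and Kac criterion come from the factorization $R_{a(\alpha)}=R_B\ot R_\alpha$ with eigenvalues $\lambda_{\alpha,i}Q_{j,T}/Q_{k,T}$, exactly as in the paper's computation. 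The only cosmetic difference is that you deduce the Kac-type characterization from $\dim=\dim_q$ together with $\Tr(Q_T)\Tr(Q_T^{-1})\geq n_T^2$ (equality iff $Q_T$ is scalar), whereas the paper reads it directly off $R_{a(\alpha)}=\id$; both rest on the same explicit formula, and the one computation you defer --- writing out $t_{\overline{\alpha}}(1)$ in the basis $\{b^T_{jk}\ot e_i\}$ and checking non-degeneracy --- is precisely the step the paper carries out in detail.
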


\begin{proof}Since the coefficients of $a(\alpha)$ generate the C*-algebra $A:=\wprgdc$, the uniqueness of $\Delta$ is obvious. Let us show the existence. For $\alpha\in\irr(\G)$, define the unitary $v(\alpha)=a(\alpha)_{(12)}a(\alpha)_{(13)}\in\mathcal{L}(B\ot H_\alpha)\ot (A\ot A)$. Let $S\in\Hom(\alpha\ot\beta,\gamma)$. We need to show that $(m\ot S)\Sigma_{23}\in \Hom(v(\alpha)\ot v(\beta),v(\gamma))$ and to do so, we will view the $a(\alpha)$ as three legs objects in $\mathcal{L}(B)\ot\mathcal{L}(H_\alpha)\ot A$. Since $(m\ot S)\Sigma_{23}\in \Hom(a(\alpha)\ot a(\beta),a(\gamma))$ we have

\begin{eqnarray*}
((m\ot S)\Sigma_{23})\ot 1_{A\ot A})(v(\alpha)\ot v(\beta))&=&((m\ot S)\Sigma_{23}\ot 1_{A\ot A})a(\alpha)_{(125)}a(\alpha)_{(126)}a(\beta)_{(345)}a(\beta)_{(346)}\\
&=&((m\ot S)\Sigma_{23}\ot 1_{A\ot A})a(\alpha)_{(125)}a(\beta)_{(345)}a(\alpha)_{(126)}a(\beta)_{(346)}\\
&=&a(\gamma)_{(123)}((m\ot S)\Sigma_{23}\ot 1_{A\ot A})a(\alpha)_{(126)}a(\beta)_{(346)}\\
&=&a(\gamma)_{(123)} a(\gamma)_{(124)}((m\ot S)\Sigma_{23}\ot 1_{A\ot A})\\
&=&v(\gamma)((m\ot S)\Sigma_{23}\ot 1_{A\ot A}).
\end{eqnarray*}

Moreover, $v(1_\G\eta\ot 1_{A\ot A}=a(1_{\G})_{(12)}a(1_{\G})_{(13)}(\eta\ot 1_{A\ot A})=\eta\ot 1_{A\ot A}$ and,
$$(d\ot 1_{A\ot A})v(A_\G)=(d\ot 1_{A\ot A})a(1_{\G})_{(12)}a(1_{\G})_{(13)}=a(1_{\G})_{(12)}(d\ot 1_{A\ot A})a(1_{\G})_{(13)}=v(1_\G)(d\ot 1^{\ot 2}).$$
Hence, the existence of $\Delta$ follows from the universal property of the C*-algebra $A$.

Let us now check the conditions of Definition \ref{cqg2} for the pair $(A,\Delta)$. It is clear that the matrices $a(\alpha)$ are unitary and we just proved that the comultiplication $\Delta$ exists. What is left is to show that the $\widetilde{a(\alpha)}$ are invertible. By Remark \ref{RmkInvertible} it suffices to show that there exists a non-degenerate $t_{\overline{\alpha}}\in \Hom(1,a(\overline{\alpha})\ot a(\alpha))$. Let $s_{\alpha}\in \Hom(1_\G,\alpha\ot\overline{\alpha})$ and $s_{\overline{\alpha}}\in \Hom(1_\G,\overline{\alpha}\ot\alpha)$ be the non-zero well-normalized intertwiners and define $t_{\overline{\alpha}}:=(\widetilde{m\ot s_{\overline{\alpha}}^*})^*\circ\eta\in \Hom(1,a(\overline{\alpha})\ot a(\alpha))$. Let us check that $t_{\overline{\alpha}}$ is non-degenerate. Take an orthonormal basis $(e^\alpha_i)_i$ of $H_\alpha$ which diaganalizes $R_\alpha:=J_\alpha^* J_\alpha$ and write $R_\alpha e^\alpha_i=\lambda_{\alpha,i}e^\alpha_i$. Note that $(e^{\overline{\alpha}})_i$ is an orthonormal basis of $H_{\overline{\alpha}}$, where $e^{\overline{\alpha}}_i:=(\lambda_{\alpha,i})^{-\frac{1}{2}}J_\alpha e^\alpha_i$. Moreover, since $\langle s_\alpha(1),e^\alpha_i\ot e^{\overline{\alpha}}_j\rangle=\langle J_\alpha e^\alpha_i,e^{\overline{\alpha}}_j\rangle=(\lambda_{\alpha,i})^{-\frac{1}{2}}\langle J_\alpha e^\alpha_i,J_\alpha e^\alpha_j\rangle=\sqrt{\lambda_{\alpha,i}}\delta_{i,j}$, we find $s_\alpha(1)=\sum_i\sqrt{\lambda_{\alpha,i}}e_i^\alpha\ot e_i^{\overline{\alpha}}$. Similarly we have $s_{\overline{\alpha}}(1)=\sum_i(\lambda_{\alpha,i})^{-\frac{1}{2}}e_i^{\overline{\alpha}}\ot e_i^\alpha$ and note that  dim$_q^2(\alpha)=\sum_i\lambda_{\alpha,i}=\sum_i(\lambda_{\alpha,i})^{-1}$.

An easy computation gives $t_{\overline{\alpha}}(1)=\sum_i(\lambda_{\alpha,i})^{-\frac{1}{2}}\Sigma_{23}(m^*(1_B)\ot e_i^{\overline{\alpha}}\ot e_i^\alpha)$. We consider the orthonormal basis $\mathcal{B}'=\{b^T_{ij}\}$ of $B=\oplus_{T=1}^c M_{n_T}(\C)$ introduced in Section $1$ and for which we obviously have $1_B=\sum_{T,i}Q_{i,T}^{\frac{1}{2}}b^T_{ii}$ and $m^*(b^T_{ij})=\sum_k Q_{k,T}^{-\frac{1}{2}}b_{ik}^T\ot b_{kj}^T$. It follows that $m^*(1_B)=\sum_{T,i,k}\sqrt{\frac{Q_{i,T}}{Q_{k,T}}}b_{ik}^T\ot b_{ki}^T$ hence, $t_{\overline{\alpha}}(1)=\sum_{i,j,k,T}\sqrt{\frac{Q_{j,T}}{\lambda_{\alpha,i}Q_{k,T}}}b_{jk}^T\ot e^{\overline{\alpha}}_i\ot b_{kj}^T\ot e^\alpha_i$. From the previous formulae it is clear that $t_{\overline{\alpha}}$ is non-degenerate. This shows that $(A,\Delta)$ is a compact quantum group.

Let us define $t_\alpha:=(\widetilde{m\ot s_{\alpha}^*})^*\circ\eta\in \Hom(1,a(\alpha)\ot a(\overline{\alpha}))$. As before, an easy computation gives $t_{\alpha}(1)=\sum_{i,j,k,T}\sqrt{\frac{\lambda_{\alpha,i}Q_{j,T}}{Q_{k,T}}}b_{jk}^T\ot e^\alpha_i\ot b_{kj}^T\ot e^{\overline{\alpha}}_i$. In particular, $t_\alpha$ is non-zero. This shows that the contragredient of $a(\alpha)$ is, up to equivalence, $a(\overline{\alpha})$. Moreover, denoting by $J_{a(\alpha)}$ the antilinear map $B\ot H_\alpha\rightarrow B\ot H_{\overline{\alpha}}$ associated to $t_\alpha(1)$ (and by $J_{a(\overline{\alpha})}$ the one associated with $t_{\overline{\alpha}}(1)$), we see from the formulas of $t_\alpha(1)$ and $t_{\overline{\alpha}}(1)$ that $J_{a(\alpha)}^{-1}=J_{a(\overline{\alpha})}$. Moreover, since
$\Vert t_\alpha(1)\Vert^2=\sum_{i,j,k,T}\frac{\lambda_{\alpha,i}Q_{j,T}}{Q_{k,T}}=$ dim$_q(\alpha)\sum_{j,k,T}\frac{Q_{j,T}}{Q_{k,T}}=$ dim$_q(\alpha)\sum_{T}$Tr$(Q_T)$Tr$(Q_T^{-1})=\Vert t_{\overline{\alpha}}(1)\Vert^2$, it follows that our pair of intertwiners $(t_\alpha,t_{\overline{\alpha}})$ is well-normalized. Hence, dim$_q(a(\alpha))=$ dim$_q(\alpha)\sum_{T}$Tr$(Q_T)$Tr$(Q_T^{-1})$ and
$$R_{a(\alpha)}(b^T_{jk}\ot e^\alpha_i)=\frac{\lambda_{\alpha,i}Q_{j,T}}{Q_{k,T}}b^T_{jk}\ot e^\alpha_i,\quad\text{where }R_{a(\alpha)}:=J_{a(\alpha)}^*J_{a(\alpha)}.$$
Note that, since the algebra generated by the coefficients of the representations $a(\alpha)$, for $\alpha\in\irr(\G)$ is dense in $A$, it follows from the general theory that $\wprhgd$ is of Kac type if and only if $R_{a(\alpha)}=\id$ for all $\alpha\in\irr(\G)$. By the explicit formula for $R_{a(\alpha)}$, it follows that $\wprhgd$ is of Kac type if and only if $\frac{\lambda_{\alpha,i}Q_{j,T}}{Q_{k,T}}=1$ for all $\alpha,T,i,j,k$. It is then easy to deduce the last assertion of the Proposition.\end{proof}

\begin{remark}
As observed in Remark \ref{rem:trivd}, if $d\in\End(u)$, where $u$ is the fundamental representation of $\aut$, the condition $d\in\End(a(1_{\G}))$ in the definition of the free wreath product is a consequence of the previous ones. In this case the free wreath product by $\G$ will be denoted $\wprg$ or $\wprhg$. In the following sections we will take into account only this case. The more general situation (with the endomorphism $d$ is non trivial) will be considered only in the last section, where we will give a geometric justification of Definition \ref{deffwpg}.
\end{remark}

\begin{remark}
The definition of the compact quantum group $\wprhgd$ implies that every irreducible representation can be obtained as a sub-representation of a suitable tensor product of the basic representations $a(\alpha)$, for  $\alpha\in\irr(\G)$, and actually this remark was used in the proof of the Kac type assertion of the previous Proposition.
\end{remark}

%%%%%%%%%%%%%%%%%%%%%%%%%%%%%%%%%%%%%%%%%%%%%%%%%%%%%%%%%%%%%%%%
\section{Spaces of intertwiners}
%%%%%%%%%%%%%%%%%%%%%%%%%%%%%%%%%%%%%%%%%%%%%%%%%%%%%%%%%%%%%%%%

In this section we want to combine and generalize some results from \cite{lt14} and from \cite{pit14}, in order to describe the spaces of intertwiners of the free wreath product $\wprhg$ by means of decorated noncrossing partitions.

\begin{deff}
Let $p\in NC(k,l)$ and decorate from left to right the $k$ upper points of $p$ with the representations of the tuple $\alpha=(\alpha_1,...,\alpha_k)\in\rep(\G)^k$  and the $l$ lower points with the representations of the tuple $\beta=(\beta_1,...,\beta_l)\in\rep(\G)^l$. Denote by $b_v$, $v=1,..,m$ the different blocks of $p$, let $U_v$ (resp. $L_v$) be the upper (reps. lower) points of the block $b_v$ and let $\alpha_{U_v}$ (resp. $\beta_{L_v}$) be the tensor product, ordered from left to right, of the representations which decorate the upper (resp. lower) points of $b_v$. By convention we define $\alpha_{U_v}=1_{\G}$ (resp. $\beta_{L_v}=1_{\G}$) if $b_v$ does not have upper point (resp. lower points).  Similarly, let $H_{U_v}$ (resp. $H_{L_v}$) be the tensor product, ordered from left to right, of the Hilbert spaces associated to these representations. Again, if $b_v$ does not have upper point (resp. lower points) we set $H_{U_v}=\C$ (resp. $H_{L_v}=\C$).

The partition $p$ is said to be \textit{well decorated} if, for every block $b_v$, one has $\Hom (\alpha_{U_v},\beta_{U_v})\neq\{0\}$. We denote by $NC_{\G}(\alpha_1,...,\alpha_k;\beta_1,...,\beta_l)$ the set of well decorated noncrossing partitions.
\end{deff}

Let $p\in NC_{\G}(\alpha_1,...,\alpha_k;\beta_1,...,\beta_l)$. For each block $b_v$ of $p$ choose a non-zero morphism $S_v\in \Hom (\alpha_{U_v},\beta_{U_v})$ and define $S:=\bigotimes_{v=1}^m S_v:\bigotimes_{v=1}^m H_{U_v}\longrightarrow \bigotimes_{v=1}^m H_{L_v}$ the tensor product, ordered from left to right. Then, it is quite natural to consider the map $$T_p\otimes S:B^{\otimes k}\otimes\bigotimes_{v=1}^m H_{U_v}\longrightarrow B^{\otimes l}\otimes\bigotimes_{v=1}^m H_{L_v}.$$

To ultimately obtain an intertwiner in $\Hom(a(\alpha_1)\otimes...\otimes a(\alpha_k),a(\beta_1)\otimes...\otimes a(\beta_l))$ we need to reorder the spaces. To do so, we define $s_{p,U}:\bigotimes_{i=1}^k(B\otimes H_{\alpha_i})\longrightarrow B^{\otimes k}\otimes\bigotimes_{v=1}^m H_{U_v}$ and $s_{p,L}:\bigotimes_{j=1}^l(B\otimes H_{\beta_j})\longrightarrow B^{\otimes l}\otimes\bigotimes_{v=1}^m H_{L_v}$ as the applications which reorder the spaces associated to the upper and lower points of $p$ respectively.

\begin{deff}
The map in $\mathcal{L}(\bigotimes_{i=1}^k(B\otimes H_{\alpha_i}),\bigotimes_{j=1}^l(B\otimes H_{\beta_j}))$ associated to a decorated noncrossing partition $p\in NC_{\G}(\alpha_1,...,\alpha_k;\beta_1,...,\beta_l)$ endowed with a morphism $S\in\bigotimes_{v=1}^m \Hom(\alpha_{U_v},\beta_{L_v})$ is $$T_{p,S}:=s_{p,L}^{-1}\circ (T_p\otimes S)\circ s_{p,U}$$
\end{deff}

The next result easily follows from Proposition \ref{propmap}.

\begin{prop}\label{propmapdec}
Let $p\in NC_{\G}(\alpha_1,...,\alpha_k;\beta_1,...,\beta_l)$ be a decorated noncrossing partition endowed with the morphism $S\in\bigotimes_{v=1}^m \Hom(\alpha_{U_v},\beta_{L_v})$. Similarly, let $q\in NC_{\G}(\alpha'_1,...,\alpha_{k'}';\beta_1',...,\beta_{l'}')$ be a decorated noncrossing partition endowed with the morphism $S'\in\bigotimes_{v=1}^{m'}\Hom(\alpha'_{U_v},\beta'_{L_v})$. One has:
\begin{enumerate}
\item $T_{p\otimes q, S\otimes S'}=T_{p,S}\otimes T_{q,S'}$
\item $T_{p,S}^*=T_{p^*,S^*}$
\item if $l=k'$ and $\beta_i=\alpha'_i$ for all $i=1,...,k'$ there are two possibilities:
\begin{enumerate}
\item[a.] if $\psi$ is a (unital) $\delta$-form, then $T_{qp,S'S}=\delta^{-cy(p,q)}T_{q,S'} T_{p,S}$,
\item[b.] if $\tpsi$ is a (possibly non unital) 1-form, then $T_{qp,S'S}=\tpsi(1)^{-cb(p,q)}T_{q,S'} T_{p,S}$.
\end{enumerate}
\end{enumerate}
\end{prop}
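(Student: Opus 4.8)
The plan is to reduce each of the three assertions to the corresponding statement in Proposition \ref{propmap} for the undecorated maps $T_p$, together with the elementary functoriality of the tensor product of morphisms and the bookkeeping carried out by the reordering isometries $s_{p,U}, s_{p,L}$. The key observation throughout is that $T_{p,S} = s_{p,L}^{-1}\circ(T_p\ot S)\circ s_{p,U}$ separates cleanly into a ``$B$-part'' governed by $T_p$ and an ``$H$-part'' governed by $S$, and that the reordering maps are the only thing coupling the two; so each identity will follow by moving the $s$'s past one another and invoking the analogous identity on each factor.

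First I would treat (1). Since $p\ot q$ is the horizontal concatenation of $p$ and $q$, its blocks are exactly the blocks of $p$ together with the blocks of $q$, so the decorating morphism for $p\ot q$ is $S\ot S'$ and the reordering isometry $s_{p\ot q, U}$ is, after identifying the tensor factors in the obvious way, $(s_{p,U}\ot s_{q,U})$ composed with a fixed permutation of legs; similarly for the lower reordering. Using $T_{p\ot q}=T_p\ot T_q$ from Proposition \ref{propmap}(1) and expanding $T_{p\ot q, S\ot S'}$ according to its definition, the permutations on the top and bottom cancel against each other and one is left with $T_{p,S}\ot T_{q,S'}$. This is entirely a matter of checking that the leg-numbering conventions match up, which is routine.

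For (2), the adjoint $p^*$ has the same blocks as $p$ but with upper and lower points interchanged, so a morphism $S_v\in\Hom(\alpha_{U_v},\beta_{L_v})$ for $p$ becomes, for $p^*$, a morphism $S_v^*\in\Hom(\beta_{L_v},\alpha_{U_v})$ (note that here the roles of ``upper'' and ``lower'' in the block have swapped), hence the decorating morphism of $p^*$ is $S^*$, and the reordering isometries satisfy $s_{p^*,U}=s_{p,L}$ and $s_{p^*,L}=s_{p,U}$. Taking adjoints in the definition of $T_{p,S}$ and using $T_p^*=T_{p^*}$ from Proposition \ref{propmap}(2) gives $T_{p,S}^* = s_{p,U}^{-1}\circ(T_{p^*}\ot S^*)\circ s_{p,L} = T_{p^*,S^*}$. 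For (3), which is where the genuine content of the decorated composition rule lives, I would compose $T_{q,S'}\circ T_{p,S}$, insert $s_{p,L}\circ s_{p,L}^{-1}=\id$ in the middle, and use that $s_{p,L}=s_{q,U}$ when $\beta_i=\alpha'_i$ for all $i$ — this is precisely where the hypothesis $\beta_i=\alpha'_i$ is used, since only then do the Hilbert-space factors match so that the two reorderings are literally inverse to each other on the nose. One then gets $(T_q\ot S')\circ(T_p\ot S) = (T_qT_p)\ot(S'S)$, and applying Proposition \ref{propmap}(3a) or (3b) converts $T_qT_p$ into $\delta^{cy(p,q)}T_{qp}$ (resp.\ $\tpsi(1)^{cb(p,q)}T_{qp}$). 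The composed partition $qp$ has as its blocks the equivalence classes obtained by gluing blocks of $p$ to blocks of $q$ along the identified middle points and discarding central blocks; a short argument (already present in \cite{pit14} for the existence of the composite morphism) shows that the product $S'S$ of the chosen morphisms is, up to the scalar that Proposition \ref{propmap} produces, a valid choice of decorating morphism for $qp$, using composability $\Hom(\alpha'_{U_v},\beta'_{L_v})$ matched with $\Hom(\alpha_{U_w},\beta_{L_w})$ at glued points.

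The main obstacle I anticipate is purely combinatorial rather than conceptual: keeping the reordering isometries $s_{p,U}$, $s_{p,L}$ and the permutations of tensor legs bookkept correctly, especially in (3), where one must verify that after gluing, the morphism $S'S$ assigned to a merged block of $qp$ really is the tensor product (in the correct left-to-right order) of the corresponding $S'_v$'s and $S_w$'s, and that central blocks — which contribute cycles and hence the scalar $\delta^{-cy(p,q)}$ or $\tpsi(1)^{-cb(p,q)}$ but no Hilbert-space factor — are accounted for consistently on both the $B$-side and the $H$-side. Since a central block carries a closed loop of morphisms whose composite is a scalar (by well-decoratedness and irreducibility considerations as in \cite{pit14}), one should check that this scalar is already absorbed into the normalization, so that no extra factor beyond the one coming from Proposition \ref{propmap} appears. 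Modulo this bookkeeping, the proof is a direct and short consequence of Proposition \ref{propmap}.
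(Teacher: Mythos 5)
Your proposal is correct and follows essentially the same route as the paper: factor $T_{p,S}=s_{p,L}^{-1}\circ(T_p\ot S)\circ s_{p,U}$, commute the reordering isometries past the tensor factors, and invoke Proposition \ref{propmap} on the undecorated part. The paper's own proof is exactly this computation (and, for part (3), it likewise silently identifies $s_{q,U}\circ s_{p,L}^{-1}$ with the identity and treats $S'S$ as the decoration of the glued blocks of $qp$, so the bookkeeping you flag as the main obstacle is handled there at the same level of detail).
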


\begin{proof}
The first relation follows from
\begin{eqnarray*}
T_{p,S}\otimes T_{q,S'}&=&(s_{p,L}^{-1}\circ (T_p\otimes S)\circ s_{p,U})\otimes (s_{q,L}^{-1}\circ (T_q\otimes S')\circ s_{q,U})\\
&=&(s_{p,L}^{-1}\otimes s_{q,L}^{-1})\circ (T_p\otimes S\otimes T_q\otimes S')\circ (s_{p,U}\otimes s_{q,U})\\
&=&(s_{p,L}^{-1}\otimes s_{q,L}^{-1})(\id\otimes \sigma_1^{-1}\otimes \id) (\id\otimes \sigma_1\otimes \id)(T_p\otimes S\otimes T_q\otimes S') (\id\otimes \sigma_2^{-1}\otimes \id)\\
&=&(\id\otimes \sigma_2\otimes \id) (s_{p,U}\otimes s_{q,U})=s_{p\otimes q,L}^{-1}\circ (T_{p\otimes q}\otimes S\otimes S')\circ s_{p\otimes q,U}=T_{p\otimes q,S\otimes S'},
\end{eqnarray*}

where $\sigma_1$ and $\sigma_2$ are maps which reorder the spaces as necessary. In particular, $\sigma_1:\bigotimes_{i=1}^l H_{\beta_i}\otimes B^{\otimes l'}\longrightarrow B^{\otimes l'}\otimes \bigotimes_{i=1}^l H_{\beta_i}$ and $\sigma_2:\bigotimes_{i=1}^k H_{\alpha_i}\otimes B^{\otimes k'}\longrightarrow B^{\otimes k'}\otimes \bigotimes_{i=1}^k H_{\alpha_i}$. For the second relation we observe that
$
T_{p,S}^*  =(s_{p,L}^{-1}\circ (T_p\otimes S)\circ s_{p,U})^*=
s_{p,U}^{-1}\circ(T_p^*\otimes S^*)\circ s_{p,L}=
s_{p^*,L}^{-1}\circ(T_{p^*}\otimes S^*)\circ s_{p^*,U}=
T_{p^*,S^*}.
$
The compatibility with the multiplication (case 3a) follows from
\begin{eqnarray*}
T_{q,S'} T_{p,S} &= &(s_{q,L}^{-1}\circ (T_q\otimes S')\circ s_{q,U})\circ (s_{p,L}^{-1}\circ (T_p\otimes S)\circ s_{p,U})=
s_{q,L}^{-1}\circ (T_q\otimes S')\circ (T_p\otimes S)\circ s_{p,U}\\&=& 
\delta^{cy(p,q)} (s_{qp,L}^{-1}\circ (T_{qp}\otimes S'S)\circ s_{qp,U})=
\delta^{cy(p,q)}T_{qp,S'S}
\end{eqnarray*}
The proof of the case 3b is analogous.
\end{proof}

The following lemma is a linearity result concerning these morphisms.

\begin{lemma}\label{lintp}
Let $p\in NC_{\G}(\alpha_1,...,\alpha_k;\beta_1,...,\beta_l)$ endowed with the morphisms $S,S'\in\bigotimes_{v=1}^m \Hom(\alpha_{U_v},\beta_{L_v})$. For all $\lambda,\mu\in\C$ we have $\lambda T_{p,S}+\mu T_{p,S'}=T_{p,\lambda S+\mu S'}$.
\end{lemma}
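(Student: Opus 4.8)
The statement to prove is Lemma~\ref{lintp}: that $\lambda T_{p,S}+\mu T_{p,S'}=T_{p,\lambda S+\mu S'}$ for a fixed well-decorated noncrossing partition $p$. The key observation is that, by the very definition $T_{p,S}=s_{p,L}^{-1}\circ(T_p\otimes S)\circ s_{p,U}$, the only place the chosen block-morphism enters is through the middle factor, while $s_{p,U}$ and $s_{p,L}^{-1}$ are fixed linear (indeed unitary) reshuffling maps that do not depend on $S$ at all. Likewise $T_p$ is the fixed linear map attached to the underlying partition $p$ (Definition~\ref{map2}) and is independent of the decoration. So the whole assertion reduces to the bilinearity/linearity of the operation $S\mapsto T_p\otimes S$ sandwiched between two fixed maps.

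First I would write out the right-hand side:
\begin{eqnarray*}
T_{p,\lambda S+\mu S'}&=&s_{p,L}^{-1}\circ\big(T_p\otimes(\lambda S+\mu S')\big)\circ s_{p,U}.
\end{eqnarray*}
Then I would use linearity of the algebraic tensor product of linear maps in its second argument, i.e. $T_p\otimes(\lambda S+\mu S')=\lambda(T_p\otimes S)+\mu(T_p\otimes S')$, which is immediate since tensoring is bilinear on $\mathcal{L}(\cdot,\cdot)$. Plugging this in and distributing the pre- and post-composition with the fixed linear maps $s_{p,U}$ and $s_{p,L}^{-1}$ over the sum yields
\begin{eqnarray*}
T_{p,\lambda S+\mu S'}&=&\lambda\, s_{p,L}^{-1}\circ(T_p\otimes S)\circ s_{p,U}+\mu\, s_{p,L}^{-1}\circ(T_p\otimes S')\circ s_{p,U}\\
&=&\lambda T_{p,S}+\mu T_{p,S'},
\end{eqnarray*}
which is exactly the claim.

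**Where the (minor) care is needed.** There is essentially no obstacle here — the statement is a formal consequence of the definition. The only point worth a sentence is that $\lambda S+\mu S'$ genuinely lies in $\bigotimes_{v=1}^m\Hom(\alpha_{U_v},\beta_{L_v})$, so that $T_{p,\lambda S+\mu S'}$ is defined: this holds because $S$ and $S'$ are taken in the \emph{same} space $\bigotimes_{v=1}^m\Hom(\alpha_{U_v},\beta_{L_v})$ attached to the \emph{same} decorated partition $p$, and that space is a vector space. (Note in particular that the lemma is linear in the global morphism $S$, which is a tensor of block-morphisms, not separately linear in each $S_v$ — consistent with the fact that $\bigotimes_v\Hom(\alpha_{U_v},\beta_{L_v})$ is the linear span of such elementary tensors.) I would phrase the proof in two or three lines along exactly the chain of equalities above, citing only the bilinearity of the tensor product of linear maps and the definition of $T_{p,S}$.
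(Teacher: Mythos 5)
Your proof is correct and follows essentially the same route as the paper's: both arguments unfold the definition $T_{p,S}=s_{p,L}^{-1}\circ(T_p\otimes S)\circ s_{p,U}$ and reduce the claim to the bilinearity of the tensor product of linear maps together with linearity of pre- and post-composition by the fixed reordering maps. Your added remark that $\lambda S+\mu S'$ stays in the same space $\bigotimes_{v}\Hom(\alpha_{U_v},\beta_{L_v})$ is a reasonable (if implicit in the paper) sanity check and does not change the argument.
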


\begin{proof}
By applying the definition and by using the linearity of the different maps we have:
\begin{eqnarray*}
\lambda T_{p,S}+\mu T_{p,S'}&=&\lambda(s_{p,L}^{-1}\circ (T_p\otimes S)\circ s_{p,U})+\mu (s_{p,L}^{-1}\circ (T_p\otimes S')\circ s_{p,U})\\ &=&
s_{p,L}^{-1}\circ (T_p\otimes \lambda S)\circ s_{p,U}+s_{p,L}^{-1}\circ (T_p\otimes \mu S')\circ s_{p,U}\\ & =&
s_{p,L}^{-1}\circ ((T_p\otimes \lambda S)\circ s_{p,U}+(T_p\otimes \mu S')\circ s_{p,U})\\ & =&
s_{p,L}^{-1}\circ ((T_p\otimes \lambda S+T_p\otimes \mu S')\circ s_{p,U})\\ & =&
s_{p,L}^{-1}\circ ((T_p\otimes \lambda S+\mu S')\circ s_{p,U}) =T_{p,\lambda S+\mu S'}.
\end{eqnarray*}
\end{proof}

\begin{remark}
The category $\mathscr{NC}_{\G}$ with objects the free monoid $M$ over $\irr(\G)$ and morphisms
$$\Hom((\alpha_1,\dots,\alpha_k);(\beta_1,\dots,\beta_l):=span\{T_{p,S} | p\in NC_{\G}(\alpha_1,...,\alpha_k;\beta_1,...,\beta_l), S\in \bigotimes_{v=1}^m \Hom(\alpha_{U_v},\beta_{L_v})\}$$
is a rigid C*-tensor category. The result follows easily from Proposition \ref{propmapdec} and Lemma \ref{lintp}. Indeed, the tensor product is given by concatenation in the monoid: $(\alpha_1,\dots,\alpha_k)\ot(\beta_1,\dots,\beta_l):=(\alpha_1,\dots,\alpha_k,\beta_1,\dots,\beta_l)$. The object $1$ is given by the empty word $\emptyset\in M$ and the conjugate of $(\alpha_1,\dots,\alpha_k)$ is given by $(\overline{\alpha}_k,\dots,\overline{\alpha}_1)$. Actually, the pair $(R,\overline{R})$ satisfying the conjugate equations for the pair $((\alpha_1,\dots,\alpha_k),(\overline{\alpha}_k,\dots,\overline{\alpha}_1))$ is defined as follows. Let $(S,\overline{S})$ be a pair satisfying the conjugate equation for the pair $u=\alpha_1\ot\ldots\ot\alpha_k$ and $\overline{u}=\overline{\alpha}_k\ot\ldots\ot\overline{\alpha}_1$ in the rigid C*-tensor category $\mathcal{R}(\G)$. Hence, the partition $p\in NC(0,2k)$ with no upper points and all the lower points connected together is well decorated in either $NC_\G(\emptyset,\alpha_1\ot\ldots\alpha_k\ot\overline{\alpha}_k\ot\ldots\ot\overline{\alpha}_1)$ or  $NC_\G(\emptyset,\overline{\alpha}_k\ot\ldots\ot\overline{\alpha}_1\ot \alpha_1\ot\ldots\alpha_k)$ and we may define $R:=T_{p,S}\in\Hom(1,(\alpha_1,\dots,\alpha_k)\ot(\overline{\alpha}_k,\dots,\overline{\alpha}_1))$ and $\overline{R}:=T_{p,\overline{S}}\in\Hom(1,(\overline{\alpha}_k,\dots,\overline{\alpha}_1)\ot(\alpha_1,\dots,\alpha_k))$.
\end{remark}

\begin{theorem}\label{intertwrpr}
Let $B$ be a $n$-dimensional C*-algebra ($n\geq 4$) endowed with a $\delta$-form $\psi$ and $\G$ a compact quantum group. Consider the free wreath product $\wprhg$ with basic representations $a(\alpha)$, where $\alpha\in\irr(\G)$. Then, for all $k,l\in\N$
$$\begin{array}{l}
\Hom(\bigotimes_{i=1}^k a(\alpha_i),\bigotimes_{j=1}^l a(\beta_j))=\\ \hspace{4.7cm}
  span\{T_{p,S} | p\in NC_{\G}(\alpha_1,...,\alpha_k;\beta_1,...,\beta_l), S\in \bigotimes_{v=1}^m \Hom(\alpha_{U_v},\beta_{L_v})\}
\end{array}$$
with the convention that, if $k=0$, $\bigotimes_{i=1}^k a(\alpha_i)=1_{\wprhg}$ and the space of the noncrossing partitions is $NC_{\G}(\emptyset;\bigotimes_{j=1}^l a(\beta_j))$, i.e. it does not have upper points. Similarly, if $l=0$.\\
Moreover, its dimension is given by $\sum_{p\in NC_{\G}(\alpha_1,...,\alpha_k;\beta_1,...,\beta_l)}\prod_{v=1}^m \dim \Hom(\alpha_{U_v},\beta_{L_v})$.
\end{theorem}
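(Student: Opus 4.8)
The plan is to prove the two inclusions separately, exploiting the fact that both sides of the asserted equality are spaces of morphisms in rigid C*-tensor categories. The inclusion $\supseteq$ is the easy direction: one checks that each $T_{p,S}$ with $p\in NC_{\G}(\alpha_1,\dots,\alpha_k;\beta_1,\dots,\beta_l)$ actually intertwines $\bigotimes_i a(\alpha_i)$ and $\bigotimes_j a(\beta_j)$. By Lemma \ref{lintp} and Proposition \ref{propmapdec}, the span of the $T_{p,S}$ is closed under composition, tensor product and adjoints, so it suffices to verify the intertwining property on a generating set of partitions. Any $p\in NC(k,l)$ decomposes (using the operations of Definition \ref{opdiag}) into tensor products and compositions of a few elementary partitions: the identity partition, the ``multiplication'' partition in $NC(2,1)$, the ``unit'' partition in $NC(0,1)$, their adjoints, and the two-block partitions coming from the conjugate morphisms; for each of these, the corresponding $T_{p,S}$ is $\widetilde{m\ot S}$, $\eta$, or one of the $R$, $\overline{R}$ of the last Remark, and these are morphisms essentially by the defining relations of $\wprhg$ in Definition \ref{deffwpg}. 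So $\supseteq$ follows from Proposition \ref{propmapdec}.

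For the reverse inclusion $\subseteq$ I would invoke Tannaka--Krein duality (Woronowicz \cite{wor88}), exactly as in the proof of Proposition \ref{intertwinersautd}. Let $\cc$ be the concrete rigid monoidal C*-category whose objects are finite words in $\irr(\G)$ and whose morphism spaces are $\Hom((\alpha_1,\dots,\alpha_k);(\beta_1,\dots,\beta_l)):=\mathrm{span}\{T_{p,S}\}$; the Remark preceding the theorem already records that this is a rigid C*-tensor category (using Proposition \ref{propmapdec} and Lemma \ref{lintp} for well-definedness, associativity of composition, and the conjugate equations). Tannaka--Krein yields a compact quantum group $\G'$ with a family of representations $v(\alpha)$, $\alpha\in\irr(\G)$, such that $\Hom(\bigotimes v(\alpha_i),\bigotimes v(\beta_j))$ equals exactly this span. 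By the $\supseteq$ inclusion and the universal property in Definition \ref{deffwpg} — the $v(\alpha)$ satisfy all the defining relations, since $\widetilde{m\ot S}$, $\eta$ and $d$ all lie in $\cc$ — there is a surjective morphism $C(\G')\to C(\wprhg)$ intertwining $v(\alpha)$ with $a(\alpha)$. Conversely the universality of the Tannaka--Krein construction gives a surjection the other way, and the two are mutually inverse, so $\Hom(\bigotimes a(\alpha_i),\bigotimes a(\beta_j))=\mathrm{span}\{T_{p,S}\}$.

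The dimension formula will then follow once we know that the spanning set $\{T_{p,S}\}$ is in fact a basis when $p$ ranges over $NC_{\G}(\alpha_1,\dots,\alpha_k;\beta_1,\dots,\beta_l)$ and, for each fixed $p$, $S$ ranges over a basis of $\bigotimes_{v=1}^m \Hom(\alpha_{U_v},\beta_{L_v})$. Linear independence across different partitions reduces, via the reordering isometries $s_{p,U}$, $s_{p,L}$, to the linear independence of the underlying maps $T_p\ot S$ on $B^{\ot k}\ot\bigotimes_v H_{U_v}$; here Theorem \ref{intertautb} gives linear independence of the $T_p$ over distinct $p\in NC(k,l)$, and tensoring with a basis of the (internal) Hom-spaces preserves independence. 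Summing $\prod_v \dim\Hom(\alpha_{U_v},\beta_{L_v})$ over all well-decorated $p$ gives the stated dimension.

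The main obstacle is the $\subseteq$ inclusion, and within it the precise identification of the Tannaka--Krein output with $\wprhg$: one must check carefully that $\cc$ is genuinely rigid and, more subtly, that the morphism spaces $\Hom((\alpha_1,\dots,\alpha_k);(\beta_1,\dots,\beta_l))$ do not secretly depend on the choices of the non-zero $S_v\in\Hom(\alpha_{U_v},\beta_{L_v})$ — this is where Lemma \ref{lintp} (linearity in $S$) is essential, allowing us to treat $S$ as ranging over the whole internal Hom-space rather than a single representative. A secondary technical point is bookkeeping with the reordering maps $s_{p,U}$, $s_{p,L}$, $\sigma_1$, $\sigma_2$ to make the categorical operations match those on partitions; this is routine but must be done consistently, and it is already essentially carried out in the proof of Proposition \ref{propmapdec}.
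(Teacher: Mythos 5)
Your overall strategy is the same as the paper's (two inclusions, Tannaka--Krein for $\subseteq$, generation by elementary morphisms for $\supseteq$, linear independence from Theorem \ref{intertautb} for the dimension count), but there is a genuine gap in your argument for $\supseteq$, which is in fact the substantive half of the theorem --- you locate the main obstacle in $\subseteq$, whereas that direction is the formal one. You claim that every $T_{p,S}$ decomposes into tensor products and compositions of $\widetilde{m\ot S}$, $\eta$, their adjoints and the conjugate morphisms, because the underlying partition $p$ so decomposes. But Theorem \ref{intertautb} only decomposes the \emph{undecorated} partition; it says nothing about how to decorate the intermediate edges of that decomposition, nor about how to recover the prescribed morphism $S$ as a composition of morphisms attached to the pieces. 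Two obstructions arise. First, the defining relations of $\wprhg$ only provide $\widetilde{m\ot S}\in\Hom(a(\alpha)\ot a(\beta),a(\gamma))$ for \emph{irreducible} $\alpha,\beta,\gamma$, so every intermediate edge must carry an irreducible label; cutting a one-block partition on $l\geq 4$ lower points forces a cut through a tensor product such as $\overline{\beta}_{l}\ot\overline{\beta}_{l-1}$, which is not irreducible, so one must sum over its irreducible subobjects $\alpha_i$ --- this is precisely where Lemma \ref{lintp} enters, to reassemble $\sum_i T_{p,S_i}$ as $T_{p,\sum_i S_i}$. Second, for each cut one must exhibit explicit morphisms on the pieces (in the paper: $S_{1,i}$, $S_{2,i}$, $S_{3,i}$, built from $S$, the isometries $r_i$ and the conjugate morphisms) and verify that $\sum_i(S_{3,i}\ot\id)(S_{2,i}\ot\id)S_{1,i}=S$; without this verification you have only shown that \emph{some} intertwiner with underlying partition $p$ exists, not that the given $T_{p,S}$ is an intertwiner. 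The paper carries this out by first using Frobenius reciprocity (Theorem \ref{teofrobrec}) to reduce to $k=0$, then reducing to one-block partitions, and then inducting on the number of lower points; some version of that induction is unavoidable and is missing from your proposal.

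A secondary point: in your $\subseteq$ argument the two universal properties are interchanged. The universality of the Tannaka--Krein reconstruction, together with the already established inclusion $\supseteq$, yields the surjection $C(\G')\to C(\wprhg)$; the universal property of Definition \ref{deffwpg} yields the map in the other direction $C(\wprhg)\to C(\G')$, since the $v(\alpha)$ satisfy the defining relations. The conclusion is unaffected, but as written the logic does not parse.
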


\begin{proof}
In order to prove the inclusion $\supseteq$, we have to show that every linear map $T_{p,S}$ obtained from a decorated noncrossing partition $p$ endowed with a suitable morphism $S$ is an intertwiner of $\wprhg$. In particular, we will prove that every $T_{p,S}$ can be decomposed as a linear combination of tensor products, compositions and adjoints of the basic morphisms $\widetilde{m\ot S}$, $\eta$ and $\id$. From Theorem \ref{intertautb}, we know that such a decomposition at the level of the noncrossing partitions exists. The more difficult point here is to decorate every block of the decomposition with irreducible representations and to associate the right morphisms such that, if we compose all the diagrams, we obtain the original map. By Frobenius reciprocity (Theorem \ref{teofrobrec}) we have

$$\Hom(\bigotimes_{i=1}^k a(\alpha_i),\bigotimes_{j=1}^l a(\beta_j))\cong \Hom(1,a(\beta_1)\ot...\ot a(\beta_l)\ot a(\bar{\alpha}_k)\ot ...\ot a(\bar{\alpha}_1)).$$

Moreover, by applying the Frobenius reciprocity to the rigid C*-tensor category $\mathscr{NC}_{\G}$ we get
$$\Hom((\alpha_1,...,\alpha_k),(\beta_1,...,\beta_l))\cong \Hom(\emptyset,(\beta_1,...,\beta_l,\bar{\alpha}_k,...,\bar{\alpha}_1)).$$
It follows that it is enough to prove the inclusion for $k=0$. Moreover, we can restrict ourselves to prove the result in the case of a one-block non crossing partition, because the map associated to any decorated noncrossing partition in $NC_{\G}(\emptyset,(\beta_1,...,\beta_l))$ can easily be obtained through compositions and tensor products of the maps associated to one-block noncrossing partitions and of the identity map. Let $p\in NC_{\G}(\emptyset;\beta_1,...,\beta_l)$, $b(p)=1$ be a decorated noncrossing partition endowed with the morphism $S$ and consider the map $T_{p,S}$. 
The condition $b(p)=1$ implies that $S\in\Hom(1_{\G},\bigotimes_{j=1}^l \beta_{l})$ and the diagram we have to consider is as follows.

{\centering
 \begin{tikzpicture}[thick,font=\small]
    \path (2.25,1.5) node{$\emptyset$} 
          (0,1) node{$p_l=$} 
          (1,.4) node{$\bullet$} node[below](d){$\beta_1$}
          (2,.4) node{$\bullet$} node[below](e){$\beta_2$}
          (3.5,.4) node{$\bullet$} node[below](f){$\beta_l$};

    \draw (d) -- +(0,+0.7) -| (e);
    \draw (e) -- +(0,+0.7) -| (f);

     \draw [dotted] (2.5,.5) -- (3,.5);
     
\end{tikzpicture}
\par}
\noindent
We will prove the result by induction on $l$. If $l=0$ the result is trivial, if $l=1$ we obtain the map $\eta$ which is in $\Hom(1,a(1_{\G}))$ by definition. If $l=2$, $S\in\Hom(1_{\G},\beta_1\ot \beta_2)$ and the diagram can be decomposed as follows.

{\centering
 \begin{tikzpicture}[thick,font=\small]
    \path (1.5,2.4) node{$\emptyset$} 
             
          (1.5,1.8) node{$\bullet$} 
          (1.5,1.2) node{$\bullet$} node[above](c){$1_{\G}$} 
          (1,.4) node{$\bullet$} node[below](d){$\beta_1$}
          (2,.4) node{$\bullet$} node[below](e){$\beta_2$};

    \draw (d) -- +(0,+0.7) -| (e);
    \draw (c) -- +(0,-0.75);

\end{tikzpicture}
\par}
\noindent
Therefore $T_{p_2,S}=\Sigma_{23}(m^*\ot S)\eta\in\Hom(1,a(\beta_1)\ot a(\beta_2))$ because it can be seen as the composition of two intertwiners. Moreover, we observe that this situation is possible only if $\beta_2=\bar{\beta}_1$. If $l=3$ and the morphism associated to the noncrossing partition is $S\in\Hom(1_{\G},\beta_1\ot \beta_2)$, we have the following decomposition.

{\centering
 \begin{tikzpicture}[thick,font=\small]
    \path (1.5,2.4) node{$\emptyset$} 
             
          (1.5,1.8) node{$\bullet$} 
          (1.5,1.2) node{$\bullet$} node[above](c){$1_{\G}$} 
          (.5,.4) node{$\bullet$} node[below](d){$\bar{\beta_3}$}
          (2,.4) node{$\bullet$} node[below](e){$\beta_3$}
          (.5,-.3) node{$\bullet$} 
          (2,-.3) node{$\bullet$}
          (0,-1.1) node{$\bullet$} node[below](h){$\beta_1$}
          (1,-1.1) node{$\bullet$} node[below](i){$\beta_2$}
          (2,-1.1) node{$\bullet$} node[below](l){$\beta_3$};

    \draw (d) -- +(0,+0.7) -| (e);
    \draw (c) -- +(0,-0.75);
    \draw (h) -- +(0,+0.7) -| (i);
    \draw (2,-.4) -- (l);
    \draw (.5,-.3) -- +(0,-0.42);

\end{tikzpicture}
\par}
\noindent
In order to complete the description of the decomposition, we need to associate a morphism to every noncrossing partition. The morphism of the noncrossing partition corresponding to $\eta$ is clearly $\id_{1_{\G}}$, while the morphism of the lower block corresponding to the identity is $\id_{H_{\beta_3}}$. In order to define the remaining morphisms we recall the notation introduced in Definition \ref{defrigid} to denote the invariant vectors. Let $R\in\Hom(1,\bar{\beta_3}\ot \beta_3)$ and $\bar{R}\in\Hom(1,\beta_3\ot \bar{\beta_3})$ be the morphisms satisfying the conjugate equations. Then, the morphisms associated to the two blocks corresponding to $m^*$ are $R\in\Hom(1,\bar{\beta_3}\ot \beta_3)$ and $(\id_{H_{\beta_1}\ot H_{\beta_2}}\ot \bar{R}^*)(S\ot \id_{H_{\bar{\beta_3}}})\in\Hom(\bar{\beta_3},\beta_1 \ot \beta_2)$ respectively. An easy computation allows us to verify that 
$S=[(\id_{H_{\beta_1}\ot H_{\beta_2}}\ot \bar{R}^*)(S\ot \id_{H_{\bar{\beta_3}}})\ot \id_{H_{\beta_3}}]R$. This means that $T_{p_3,S}$ can be decomposed in term of some of the basic morphisms introduced in the definition of the free wreath product and therefore is in $\Hom(1,\beta_1\ot \beta_2\ot \beta_3)$.\\
Now, we are ready for the inductive step. Let us suppose the inclusion true for a $l=t\geq 3$ and prove it for $t+1$. As usual, let $S\in\Hom(1,\bigotimes_{j=1}^{t+1}\beta_j)$ be the morphism associated to the noncrossing partition $p_{t+1}$. The decomposition which we need to consider in this case is (with $\alpha_i\subset \bar{\beta}_{t+1}\ot \bar{\beta}_{t}$):

{\centering
 \begin{tikzpicture}[thick,font=\small]
    \path (1.5,2.4) node{$\emptyset$} 
             
          (1.5,1.8) node{$\bullet$} 
          (1.5,1.2) node{$\bullet$} node[above](c){$1_{\G}$} 
          (.5,.4) node{$\bullet$} node[below](d){$\bar{\beta}_{t+1}$}
          (2,.4) node{$\bullet$} node[below](e){$\beta_{t+1}$}
          (.5,-.3) node{$\bullet$} 
          (2,-.3) node{$\bullet$}
          (-.4,-1.1) node{$\bullet$} node[below](h){$\alpha_i$}
          (1.2,-1.1) node{$\bullet$} node[below](i){$\beta_t$}
          (2,-1.1) node{$\bullet$} node[below](l){$\beta_{t+1}$}
          (-.4,-1.8) node{$\bullet$}
          (1.2,-1.8) node{$\bullet$}
          (2,-1.8) node{$\bullet$}
          (-1,-2.7) node{$\bullet$} node[below](m){$\beta_1$}
          (-.6,-2.7) node{$\bullet$} node[below](n){$\beta_2$}
          (.6,-2.7) node{$\bullet$} node[below](o){$\beta_{t-1}$}
          (1.2,-2.7) node{$\bullet$} node[below](p){$\beta_t$}
          (2,-2.7) node{$\bullet$} node[below](q){$\beta_{t+1}$}
          (0,-2.55) node{$\cdots$};
          
    \draw (d) -- +(0,+0.7) -| (e);
    \draw (c) -- +(0,-0.75);
    \draw (h) -- +(0,+0.7) -| (i);
    \draw (2,-.4) -- (l);
    \draw (.5,-.3) -- +(0,-0.38);
    \draw (m) -- +(0,+0.7) -| (n);
    \draw (n) -- +(0,+0.7) -| (o);
    \draw (1.2,-1.8) -- (p);
    \draw (2,-1.8) -- (q);
    \draw (-.4,-1.8) -- +(0,-0.52);

\end{tikzpicture}
\par}
\noindent
Now, we have to assign a suitable morphism to every noncrossing partition of the decomposition. As in the case $l=3$, we associate the identity map to the diagrams corresponding to $\eta$ and to $\id$. The morphisms on the other blocks are less obvious and we need to introduce some notations.
Let us denote $R_t\in\Hom(1,\bar{\beta_t}\ot\beta_t)$, $\bar{R}_t\in\Hom(1,\beta_t\ot \bar{\beta_t})$, $R_{t+1}\in\Hom(1,\bar{\beta}_{t+1}\ot\beta_{t+1})$ and $\bar{R}_{t+1}\in\Hom(1,\beta_{t+1}\ot \bar{\beta}_{t+1})$ two pair of invariant vectors satisfying the conjugate equations. For every $\alpha_i\subset \bar{\beta}_{t+1}\ot \bar{\beta}_{t}$, we know that there is an isometry $r_i\in\Hom(\alpha_i,\bar{\beta}_{t+1}\ot \bar{\beta}_{t})$ such that $r_ir_i^*\in\End(\bar{\beta}_{t+1}\ot \bar{\beta}_{t})$ is a projection and $\sum_i r_ir_i^*=\id_{H_{\bar{\beta}_{t+1}}\ot H_{\bar{\beta}_{t}}}$.
There are still three morphisms to assign; they will be denoted $S_{1,i}$, $S_{2,i}$ and $S_{3,i}$. From the top to the bottom, they are the following ones. The morphism $S_{1,i}\in\Hom(1_{\G},\bar{\beta}_{t+1}\ot\beta_{t+1})$ is $R_{t+1}$. The morphism $S_{2,i}\in\Hom(\bar{\beta}_{t+1},\alpha_i\ot \beta_t)$ is $(r_i^*\ot \id_{H_{\beta_t}})(\id_{H_{\bar{\beta}_{t+1}}}\ot R_t)$. Finally, the morphism $S_{3,i}\in\Hom(\alpha_i,\bigotimes_{j=1}^{t-i}\beta_j)$ is $$(\id_{H_{\beta_1}\ot...\ot H_{\beta_{t-1}}}\ot \bar{R}_t^*(\id_{H_{\beta_t}}\ot \bar{R}_{t+1}^*\ot \id_{H_{\bar{\beta}_t}}))(S\ot\id_{H_{\bar{\beta}_{t+1}}\ot H_{\bar{\beta}_t}})r_i.$$
These are all morphisms because they are obtained through the operations of tensor product, composition and adjoint from known morphisms. Moreover, by making use of the Frobenius reciprocity and of the inductive hypothesis, we know that the linear map in $\mathcal{L}(B\ot H_{\alpha_i},\bigotimes_{j=1}^{t-1}(B\ot H_{\beta_j}))$ associated to the noncrossing partition endowed with the morphism $S_{3,i}$ is in $\Hom(a(\alpha_i),\bigotimes_{j=1}^{t-1}a(\beta_j))$. The fact that the linear maps corresponding to the other decorated noncrossing partitions are intertwiners follows from the definition of free wreath product. An easy computation allows us to verify that $\sum_i(S_{3,i}\ot \id_{H_{\beta_t}\ot H_{\beta_{t+1}}})(S_{2,i}\ot \id_{H_{\beta_{t+1}}})S_{1,i}=S$, therefore, by making use of Lemma \ref{lintp} and of Proposition \ref{propmapdec}, we have that $T_{p_{t+1},S}\in\Hom(1,\bigotimes_{j=1}^{t+1}a(\beta_j))$ as it is possible to write $T_{p_{t+1},S}$ as a linear combination of compositions, adjoints and tensor products of intertwiners.

For the second inclusion ($\subseteq$), we apply the Tannaka-Krein duality to the concrete rigid monoidal C*-category $\mathscr{NC}_{\G}$. Then, there exists a compact quantum group $\G=(C(\G),\Delta)$ such that $C(\G)$ is generated by the coefficients of a family of finite dimensional unitary representations $a(\alpha_i)'$ and $\Hom(\bigotimes_{i=1}^k a(\alpha_i)',\bigotimes_{j=1}^l a(\beta_j)')=span\{T_p | p\in NC_{\widehat{\Gamma}}(\alpha_1,...,\alpha_k;\beta_1,...,\beta_l)\}$. Moreover, because of the universality of the Tannaka-Krein construction, from the inclusion already proved we can deduce that there is a surjective map $\phi:C(\G)\longrightarrow \wprhg$ such that $(\id \ot \phi)(a(\alpha)')=a(\alpha)$, for all $\alpha\in\irr(\G)$. In order to complete the proof, we have to show that this map is an isomorphism. The existence of the inverse morphism follows from the universality of the free wreath product construction. It is enough to observe that the unitary representations $a(\alpha)'$ are such that $\widetilde{m\ot S}\in\Hom(a(\alpha)'\ot a(\beta)',a(\gamma)')$ for any $S\in\Hom(\alpha\ot \beta,\gamma)$ and $\eta\in\Hom(1,a(1_{\G})')$ because the maps $\widetilde{m\ot S}$ and $\eta$ correspond to well decorated noncrossing partitions.

Since the maps $T_p$ associated to distinct noncrossing partitions in $NC(k,l)$ are linearly independent (see Theorem \ref{intertautb}), the dimension formula follows.
\end{proof}

\begin{remark}
The description of the space of intertwiners does not depend on considering a unital $\delta$-form $\psi$ or the associated non-unital 1-form $\tilde{\psi}$. In order to prove the monoidal equivalence result in the next Section it will be necessary to consider the non-unital $1$-form $\tilde{\psi}$. 
\end{remark}

\begin{remark}\label{cara1}
As in the case of the dual of a discrete group (see \cite{pit14}), we can compute the Haar measure of some particular elements. Consider the free wreath product $\wprhg$, where $\psi$ is a $\delta$-form, and let $\chi(a(1_{\G})):=(\Tr\otimes \id)(a(1_{\G}))$ be the character of the representation $a(1_{\G})$. We easily observe that $\chi(a(1_{\G}))$ is self-adjoint. Now, we want to compute the moments $h(\chi(a(1_{\G}))^k)$; let $p_k$ be the orthogonal projection onto the fixed points space $Hom(1,a(1_{\G})^{\otimes k})$. Thanks to some classic results of Woronowicz (see \cite{wor88}) we have 
$h(\chi(a(1_{\G}))^k)=h((\Tr\otimes \id)(a(1_{\G}))^k)=\Tr((\id\otimes h)(a(1_{\G})^{\otimes k}))=\Tr(p_k)=\dim(Hom(1,a(1_{\G})^{\otimes k}))=\text{\#}NC(0,k)=C_k$ where $C_k$ are the Catalan numbers. They are the moments of the free Poisson law of parameter 1 which is then the spectral measure of $\chi(a(1_{\G}))$.
\end{remark}

%%%%%%%%%%%%%%%%%%%%%%%%%%%%%%%%%%%%%%
\section{Monoidal equivalence}
%%%%%%%%%%%%%%%%%%%%%%%%%%%%%%%%%%%%%%

In this section, we obtain a useful monoidal equivalence result for the free wreath product $\wprhg$. An analogous result has been proved in \cite[Theorem 5.11]{lt14}, in the particular case of the free wreath product of a compact matrix quantum group of Kac type by the quantum symmetric group. We will show that it can be extended to our more general context. The monoidal equivalence will allow us to reconstruct the representation theory of $\wprhg$ and to prove some properties of the operator algebras associated to the free wreath product.

By doing some minor changes and remarks, the proof of \cite[Theorem 5.11]{lt14}  also works in our more general situation. Hence, we will only sketch the proof of the following theorem, pointing out the critical passages in the adaptation to our context. Recall that the symbol $*$ denotes the maximal free product of unital C*-algebras and $\widehat{*}$ denotes the free product of compact quantum groups.

\begin{theorem}\label{teomoneq}
Let $(B,\tpsi)$ be a finite dimensional C*-algebra, $dim(B)\geq 4$, endowed with a possibly non-unital 1-form $\tpsi$. Let $0<q\leq 1$ be such that $q+q^{-1}=\sqrt{\tpsi(1)}$. Let $\G$ be a compact quantum group and consider the free wreath product $\G\wr_*\G^{aut}(B,\tpsi)$. 
Let $\HH$ be the compact quantum subgroup of $\G\hat{\ast}SU_{q}(2)$ given by
$$C(\HH)=<b_{ij}ab_{kl}\ |\ 1\le i,j,k,l\le 2, a\in C(\G)>\subset C(\G)\ast C(SU_{q}(2))$$
$$\Delta_{\HH}:=\Delta_{\G\hat{\ast}SU_{q}(2)|_{\HH}}$$
More precisely
\begin{align*}
\Delta(b_{ij}ab_{kl})=\sum_{r,s,v}b_{ir}a_{(1)}b_{ks}\otimes b_{rj}a_{(2)}b_{sl}\in C(\HH)\otimes C(\HH)
\end{align*}
where $b=(b_{ij})_{ij}$ is the generating matrix of $SU_{q}(2)$ and $\Delta_{\G}(a)=\sum a_{(1)}\otimes a_{(2)}$.\\
Then $$\G\wr_*\G^{aut}(B,\tpsi)\simeq_{mon} \HH$$
\end{theorem}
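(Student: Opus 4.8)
## Proof proposal

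The plan is to establish the monoidal equivalence by exhibiting a unitary monoidal equivalence between the (non-complete) rigid C*-tensor categories generating the two compact quantum groups, and then invoking Proposition~\ref{propgenmoneq} and Tannaka--Krein duality. On one side I have the category $\mathscr{NC}_\G$ from Section~3, whose morphism spaces $\Hom((\alpha_1,\dots,\alpha_k);(\beta_1,\dots,\beta_l))$ are spanned by the maps $T_{p,S}$ with $p\in NC_\G(\alpha_1,\dots,\alpha_k;\beta_1,\dots,\beta_l)$ and $S\in\bigotimes_v\Hom(\alpha_{U_v},\beta_{L_v})$; by Theorem~\ref{intertwrpr} this is exactly the representation category of $\G\wr_*\G^{aut}(B,\tpsi)$. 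On the other side I need a concrete description of $\mathcal{R}(\HH)$. The key point, as in \cite{lt14}, is that $SU_q(2)$ has a combinatorial (Temperley--Lieb) description of its intertwiners with loop parameter $q+q^{-1}=\sqrt{\tpsi(1)}$, and that $\HH$ is generated inside $\G\,\widehat{\ast}\,SU_q(2)$ by the "fattened" coefficients $b_{ij}ab_{kl}$. So the first step is to identify a generating family of representations of $\HH$ and compute their intertwiner spaces: concretely, for each $\alpha\in\irr(\G)$ the representation $a(\alpha)$ should correspond to the representation of $\HH$ living on $H_{1/2}\otimes H_\alpha\otimes H_{1/2}$ (where $H_{1/2}=\C^2$ is the fundamental corepresentation space of $SU_q(2)$), built from $b\otimes u^\alpha\otimes b$; the role of $B$ is played by $H_{1/2}\otimes\overline{H_{1/2}}\cong M_2(\C)$-ish, matching $\dim B\geq 4$ via the $\delta$-form with $\delta=q+q^{-1}$.

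The second step is to write down the functor $F$ explicitly and check it is a unitary monoidal equivalence. On objects, $F$ sends the word $(\alpha_1,\dots,\alpha_k)$ to the corresponding tensor product of the $\HH$-representations described above. On morphisms, the essential mechanism is that a noncrossing partition block $b_v$ decorated by $S_v\in\Hom(\alpha_{U_v},\beta_{L_v})$ should be sent to the composite of $S_v$ (acting on the $\G$-legs) with the appropriate Temperley--Lieb diagram (a single through-block / cup--cap pattern) on the $SU_q(2)$-legs; concatenating over all blocks of $p$ and reordering the legs via the maps $s_{p,U}, s_{p,L}$ yields $F(T_{p,S})$. Compatibility with tensor product, composition and adjoint then reduces to Proposition~\ref{propmapdec} on one side matched against the corresponding bookkeeping for Temperley--Lieb diagrams in $SU_q(2)$ on the other: crucially the composition rule $T_{qp,S'S}=\tpsi(1)^{-cb(p,q)}T_{q,S'}T_{p,S}$ with the loop value $\sqrt{\tpsi(1)}=q+q^{-1}$ in $SU_q(2)$ must produce the same scalar (each removed central block of $NC$ corresponds to two Temperley--Lieb loops on the two $SU_q(2)$-legs, giving $(q+q^{-1})^2=\tpsi(1)$ per block, which matches). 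This is exactly the computation carried out in \cite[Theorem 5.11]{lt14} and I would only point out the two places where the adaptation is non-trivial: first, $\G$ need not be a matrix quantum group of Kac type, so I must work with the full family $\{a(\alpha):\alpha\in\irr(\G)\}$ rather than a single fundamental representation, which is precisely what the category $\mathscr{NC}_\G$ is built to handle; second, $\tpsi$ is only a (possibly non-unital) $1$-form, so the scalars are governed by $\tpsi(1)$ and $cb(p,q)$ via case (3b) of Proposition~\ref{propmapdec} rather than by $\delta$ and $cy(p,q)$, and one checks these land correctly against $SU_q(2)$.

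The third step is to verify that $F$ is fully faithful, i.e.\ that it restricts to an isomorphism on each morphism space; since by Theorem~\ref{intertwrpr} the maps $T_{p,S}$ for distinct $(p,S)$ running over a basis of the $\Hom(\alpha_{U_v},\beta_{L_v})$ are linearly independent, and the corresponding $\HH$-side morphisms (noncrossing Temperley--Lieb patterns tensored with the same $S$-basis) are known to be linearly independent and to span $\Hom_\HH$, $F$ is bijective on morphisms; fullness and faithfulness, together with essential surjectivity onto a generating set, make $F$ a unitary monoidal equivalence of the non-complete categories. Then Proposition~\ref{propgenmoneq} extends $F$ to the idempotent/direct-sum completions, which are $\mathcal{R}(\G\wr_*\G^{aut}(B,\tpsi))$ and $\mathcal{R}(\HH)$ respectively, and Definition~\ref{defmoneq} (via restriction to irreducibles) gives $\G\wr_*\G^{aut}(B,\tpsi)\simeq_{mon}\HH$.

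The main obstacle I expect is purely organizational rather than conceptual: setting up the leg-reordering isomorphisms and the identification of $B$ with the relevant $SU_q(2)$-space so that the two sets of scalars (the $\tpsi(1)^{-cb(p,q)}$ factors from Proposition~\ref{propmapdec}(3b) and the loop-removal factors $(q+q^{-1})^{\bullet}$ from the Temperley--Lieb calculus in $SU_q(2)$) match up \emph{exactly}, including keeping track of the quantum-dimension normalizations of the $J_\alpha$/$R_\alpha$ on the $\G$-side when $\G$ is not of Kac type. Once the dictionary "block $\leftrightarrow$ Temperley--Lieb through-string, removed central block $\leftrightarrow$ two loops $\leftrightarrow$ factor $\tpsi(1)$" is nailed down, the rest follows formally from the already-established Proposition~\ref{propmapdec}, Lemma~\ref{lintp}, and Theorem~\ref{intertwrpr}, exactly as in \cite{lt14}.
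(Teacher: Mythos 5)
Your proposal follows essentially the same route as the paper's proof: identifying $a(\alpha)$ with the $\HH$-representation $b\otimes\alpha\otimes b$, describing $\Hom_{\HH}$ by Temperley--Lieb diagrams on the $SU_q(2)$-legs glued to decorated noncrossing partitions on the $\G$-legs, matching the scalar $\tpsi(1)^{-cb(p,q)}$ from Proposition~\ref{propmapdec}(3b) against the factor $(q+q^{-1})^2$ produced by the two loops arising from each removed central block, and extending to the completions via Proposition~\ref{propgenmoneq}. The only ingredient of the paper's argument you omit is the preliminary arithmetic observation (harmonic--arithmetic mean inequality) that $\tpsi(1)\geq 4$, which is what guarantees the required $q\in(0,1]$ exists.
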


\begin{proof}
The first remark is about the existence of a $q\in(0,1]$ such that $q+q^{-1}=\sqrt{\tpsi(1)}$. An easy computation shows that $q+q^{-1}:(0,1]\longrightarrow [2,\infty)$ is a bijection, therefore the monoidal equivalence makes sense only if $\tpsi(1)\geq 4$. We recall that $\tpsi:B\longrightarrow \C$ is a 1-form so 
it can be rewritten as $\tpsi(\cdot)=\sum_{\lambda}\Tr(Q_{\lambda}\cdot)$ for a suitable family of positive diagonal matrices $Q_{\lambda}$ such that $Tr(Q_{\lambda}^{-1})=1$ for every $\lambda$. With this in mind, the condition $\tpsi(1)\geq 4$ is a consequence of the following arithmetic lemma, which follows from the inequality between the harmonic mean and the arithmetic mean.
\begin{lemma}
Let $(x_i)_{i=1,...,n}, x_i> 0$ and $n\geq 2$ be a family of positive real numbers such that $\sum_{i=1}^n x_i\leq 1$. Then $\sum_{i=1}^n x_i^{-1}\geq 4$.
\end{lemma}

The proof of the monoidal equivalence is based on the construction of an explicit isomorphism between the intertwining spaces. In this first phase, we will take into account intertwiners between the tensor products of the representations which generate the compact quantum groups; only in a second time, this isomorphism will be extended to intertwiners between tensor products of all the irreducible representations. Consider the family of representations of $\HH$ given by $s(\alpha):=b\otimes \alpha \otimes b, \, \alpha\in\irr(\G)$. By using the description of the irreducible representations of a free product given by Wang in \cite{wan95}, we have that $\{s(\alpha)|\alpha\in\irr(\G), \alpha\neq 1_{\G}\}\subset \irr(\HH)$. Moreover, we observe that, for any $\beta\in\irr(\HH)$, there exists a finite family $(\alpha_i)_{i=1,...,k}$, $\alpha\in\irr(\G)$ such that $\beta\subset \bigotimes_{i=1}^k s(\alpha_i)$ because the coefficients of the representations $s(\alpha)$ are dense in $C(\HH)$.
Let us denote $\phi$ the map such that $\phi(s(\alpha)):=a(\alpha)$, $\phi(1_{\HH})=1_{\wprhg}$. Then, it is possible to define an isomorphism $$\phi:\Hom(\bigotimes_{i=1}^k s(\alpha_i),\bigotimes_{j=1}^l s(\beta_j))\longrightarrow\Hom(\bigotimes_{i=1}^k a(\alpha_i),\bigotimes_{j=1}^l a(\beta_j))$$ which satisfies the properties of a monoidal equivalence.
The core idea in order to define this map is to find a good description of the two spaces of intertwiners. The spaces on the right have been described in terms of decorated noncrossing partitions in Theorem \ref{intertwrpr}.\\
The intertwiners of $\HH$ can be described by means of semi-decorated noncrossing partitions in $NC(3k,3l)$ such that, when numbering each line of points from the left to the right, the points with a number equal to 0 or 2 modulo 3 form a Temperley-Lieb diagram in $TL(2k,2l)$ and the remaining points form a decorated noncrossing partition in $NC((\alpha_1,...,\alpha_k),(\beta_1,...,\beta_l))$ endowed with a morphism $S$. This presentation can be proved by recalling that the intertwiners of $SU_q(2)$ can be described in terms of Temperley-Lieb diagrams (such that the coefficient $q+q^{-1}$ is introduced for every central block removed during the composition operation) and by knowing the description of the intertwiners of a free product of two compact quantum groups in terms of the intertwiners of the factors (see Proposition 2.15 in \cite{lem14} and observe that the result is true for every compact quantum groups, even if it is stated only for compact matrix quantum groups).

Now, in order to describe the map $\phi$, we recall that there is an isomorphism $$\rho:TL_x(2k,2l)\longrightarrow NC_{x^2}(k,l),\, x\in\R^+$$ which satisfies all the compatibility properties of Definition \ref{defmoneq} (but at the level of the diagrams). The subscripts $x$ and $x^2$ mean that when composing two diagrams, the final diagram is multiplied by a coefficient $x$ or $x^2$ for every central block appeared. More precisely, $\rho$ acts by associating to each Temperley-Lieb diagram the noncrossing partition obtained by identifying the pairs of consecutive points and by multiplying this partition by a suitable coefficient (which depends on the diagram). This coefficient is crucial to assure the compatibility with the multiplication given by $\rho(t_2t_1)=\rho(t_2)\circ\rho(t_1)$ for $t_1,t_2$ composable Temperley-Lieb diagrams.

The map $\phi$ is then defined by sending every special diagram described above to the noncrossing partition obtained after applying the map $\rho$ to the Temperley-Lieb diagram and decorating the points with the $\alpha_i,\beta_j$. Finally, this noncrossing partition is endowed with the map $S$ (actually, some twist operations can be necessary, but for simplicity we keep the same notation). In this case, the subscript $x$, introduced in the definition of $\rho$, has to be chosen equal to $q+q^{-1}$, the coefficient corresponding to central blocks for $SU_q(2)$.

Then, every central block appeared when composing the associated noncrossing partitions, will correspond to $(q+q^{-1})^2$ and this factor is by hypothesis equal to $\tpsi(1)$, the coefficient corresponding to central blocks for $\wprthg$. Thanks to this choice, it is possible to verify that $\phi$ is a well defined isomorphism and satisfies all the properties of Definition \ref{defmoneq}.

We observe that, in order to use the isomorphism $\rho$, it has been crucial the dependence on the number of central blocks (instead of on the number of cycles) of the coefficient possibly appeared when composing two noncrossing partitions. This explains the use of $\tpsi$ instead of $\psi$.

To complete the proof, it is enough to observe that the map $\phi$ is an equivalence between the categories containing the tensor products of the generating representations of the two compact quantum groups and that there is a correspondence between these generators.
By applying Proposition \ref{propgenmoneq}, we can extend $\phi$ to an equivalence $\tilde{\phi}$ between the completions of the two categories with respect to direct sums and sub-objects and the monoidal equivalence is proved. %The map $\tilde{\phi}$ is in particular a bijection between the irreducible representations and the monoidal equivalence is proved.
\end{proof}

%%%%%%%%%%%%%%%%%%%%%%%%%%%%%%%%%%%%%%%%%	
\section{Irreducible representations and fusion rules}
%%%%%%%%%%%%%%%%%%%%%%%%%%%%%%%%%%%%%%%%%

In this section, we find the irreducible representations and describe the fusion rules of the free wreath product $\wprg$. The next result follows from Theorem \ref{intertwrpr} and is a generalisation of \cite[Cor 3.9]{lt14}.

\begin{prop}
Let $B$ be a finite dimensional C*-algebra, $dim(B)\geq 4$, endowed with a $\delta$-form $\psi$. Let $\G$ be a compact quantum group. The basic representations $a(\alpha)$, $\alpha\in\irr(\G)$ of $\wprhg$ are irreducible and pairwise non-equivalent if $\alpha\not\simeq 1_{\G}$. The representation $a(1_{\G})$ can be decomposed  as $1_{\wprhg}\oplus r_{1_{\G}}$, where $r_{1_{\G}}$ is irreducible and non-equivalent to any $a(\alpha), \alpha\not\simeq 1_{\G}$.
\end{prop}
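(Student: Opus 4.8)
The plan is to compute the relevant Hom-spaces using the description from Theorem \ref{intertwrpr} and apply Frobenius reciprocity, exactly as in the proof of \cite[Cor 3.9]{lt14}. To show that $a(\alpha)$ is irreducible for $\alpha\not\simeq 1_\G$, I would compute $\Hom(a(\alpha),a(\alpha))=\mathrm{span}\{T_{p,S}\mid p\in NC_\G(\alpha;\alpha),\ S\in\bigotimes_v\Hom(\alpha_{U_v},\beta_{L_v})\}$. A noncrossing partition in $NC(1,1)$ is either the single block $\{1,2\}$ joining the upper and lower point, or the partition with two singletons. For the two-singleton partition to be well decorated we would need $\Hom(\alpha,1_\G)\neq\{0\}$ and $\Hom(1_\G,\alpha)\neq\{0\}$, which forces $\alpha\simeq 1_\G$, contrary to hypothesis; so only the one-block partition survives, contributing $\dim\Hom(\alpha,\alpha)=1$ by irreducibility of $\alpha$ in $\mathcal{R}(\G)$. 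Hence $\dim\Hom(a(\alpha),a(\alpha))=1$ and $a(\alpha)$ is irreducible.

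For pairwise non-equivalence, I would similarly compute $\Hom(a(\alpha),a(\beta))$ for $\alpha,\beta\in\irr(\G)$, $\alpha\not\simeq\beta$, both $\not\simeq 1_\G$: the one-block partition contributes $\dim\Hom(\alpha,\beta)=0$, and the two-singleton partition is not well decorated unless $\alpha\simeq 1_\G\simeq\beta$, so $\Hom(a(\alpha),a(\beta))=\{0\}$. For the decomposition of $a(1_\G)$, I would use the fact that $\eta\in\Hom(1_{\wprhg},a(1_\G))$ is a nonzero isometry (it is the map corresponding to the one-block partition in $NC(0,1)$), so $1_{\wprhg}\subset a(1_\G)$ and we may write $a(1_\G)=1_{\wprhg}\oplus r_{1_\G}$ with $r_{1_\G}$ the orthogonal complement. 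The dimension formula from Theorem \ref{intertwrpr} gives $\dim\Hom(a(1_\G),a(1_\G))=\#NC(1,1)=2$ (both partitions are automatically well decorated since all blocks are decorated by $1_\G$, and $\dim\Hom(1_\G,1_\G)=1$), so $\dim\Hom(a(1_\G),a(1_\G))=2$; since $a(1_\G)$ contains the trivial representation with multiplicity one and has a $2$-dimensional endomorphism algebra, $r_{1_\G}$ is irreducible and not equivalent to $1_{\wprhg}$.

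Finally, to see that $r_{1_\G}$ is not equivalent to any $a(\alpha)$ with $\alpha\not\simeq 1_\G$, it suffices to note $\Hom(a(1_\G),a(\alpha))=\{0\}$ for such $\alpha$ (same computation as for non-equivalence: the one-block partition gives $\dim\Hom(1_\G,\alpha)=0$ and the two-singleton partition needs $\alpha\simeq 1_\G$), hence a fortiori $\Hom(r_{1_\G},a(\alpha))=\{0\}$.

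I do not expect a genuine obstacle here; the only point requiring a little care is the bookkeeping of which partitions in $NC(1,1)$ are well decorated and correctly pairing the abstract dimension count from Theorem \ref{intertwrpr} with the concrete basis $\{T_{p,S}\}$ — in particular invoking the linear independence of the $T_p$ (Theorem \ref{intertautb}) to be sure no collapsing occurs. Everything else is a direct application of the intertwiner description together with Frobenius reciprocity in $\mathcal{R}(\wprhg)$.
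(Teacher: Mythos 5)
Your proposal is correct and follows essentially the same route as the paper: enumerate the two partitions in $NC(1,1)$, determine which are well decorated for each pair $(\alpha,\beta)$, and read off the dimensions of the Hom-spaces from Theorem \ref{intertwrpr} together with the linear independence of the $T_p$. The only cosmetic difference is your invocation of Frobenius reciprocity, which is not actually needed for these rank-one computations.
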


\begin{proof}
Let $\alpha,\beta\in\irr(\G)$. There are only two possible decorated diagrams $p\in NC_\G(\alpha,\beta)$:

{\centering
 \begin{tikzpicture}[thick,font=\small]
    \path (3,.8)node{$\bullet$} node[above](c) {$\alpha$}
          (3,0) node{$\bullet$} node[below](d) {$\beta$}
          (5,.8) node{$\bullet$} node[above](e) {$\alpha$}
          (5,0) node{$\bullet$} node[below](f) {$\beta$};
               
    \draw (e) -- (f);

 \end{tikzpicture}
\par}
\noindent

First assume that both $\alpha,\beta\neq 1_\G$. Then, the first diagram is not well decorated. If $\alpha=\beta$, since $\Hom(\alpha,\alpha)=\C\id$, it follows from Theorem \ref{intertwrpr} that $\dim \Hom(a(\alpha),a(\alpha))=1$ and the irreducibility of $a(\alpha)$ is proved. If $\alpha\neq\beta$, since $\dim \Hom(\alpha,\beta)=0$, the second diagram is not well decorated as well. Hence, $\dim \Hom(a(\alpha),a(\beta))=0$.

If $\alpha=\beta=1_G$ then both the first and the second diagrams are well decorated. Moreover, since $\Hom(1_G,1_G)=\C$, it follows that $\dim \Hom(a(1_{\G}),a(1_{\G}))=2$, so the linear independence of the intertwiners associated to distinct noncrossing partitions together with the remark that $\dim \Hom(1_{\wprhg},a(1_{\G}))=1$ (the multiples of $\eta$) give the decomposition $a(1_{\G})\simeq1_{\wprhg}\oplus r_{1_{\G}}$.

Finally, If $\beta\neq 1_{\G}$ then the first diagram is not well decorated and if $\alpha=1_\G$, the second diagram is not well decorated as well since $\Hom(a(1_\G),a(\beta))=0$ for all $\beta\neq 1_\G$, which implies that $r_{1_\G}\not\simeq a(\beta)$ for all $\beta\neq 1_\G$. 
\end{proof}

Let us now describe the fusion semiring of $\wprhg$.

\begin{deff}\label{opermonm}
Let $M$ be the monoid whose elements are the words written by using the irreducible representations of $\G$ as letters. We define the following operations:
\begin{itemize}
\item[-] involution: $\overline{(\alpha_1,...,\alpha_k)}=(\overline{\alpha}_k,...,\overline{\alpha}_1)$
\item[-] concatenation: $(\alpha_1,...,\alpha_k),(\beta_1,...,\beta_l)=(\alpha_1,...,\alpha_k,\beta_1,...,\beta_l)$
\item[-] fusion of two non-empty words: $(\alpha_1,...,\alpha_k).(\beta_1,...,\beta_l)$
is the multiset composed by the words $(\alpha_1,...,\alpha_{k-1},\gamma,\beta_2,...,\beta_l)$ for all the possible $\gamma\subset \alpha_k\otimes \beta_1$; the multiplicity of each word is given by $\dim(\Hom(\gamma,\alpha_k\ot\beta_1))$, i.e. by the multiplicity of the representation $\gamma$ in the tensor product $\alpha_k\otimes \beta_1$.
\end{itemize}
\end{deff}

\begin{theorem}\label{ThmFusion}
Let $B$ be a finite dimensional C*-algebra, $dim(B)\geq 4$, endowed with a $\delta$-form $\psi$. Let $\G$ be a compact quantum group. The classes of irreducible non-equivalent representations of $\wprhg$ can be indexed by the elements of the monoid $M$ and denoted $r_x, \, x\in M$. The involution is given by $\overline{r}_x=r_{\overline{x}}$ and the fusion rules are:
$$r_x\otimes r_y=\sum_{\substack{x=u,t \\ y=\bar{t},v}}r_{u,v}\oplus \sum_{\substack{x=u,t\;  y=\bar{t},v \\  u\neq\emptyset , v\neq \emptyset\\ w\in u.v}}r_{w}$$
\end{theorem}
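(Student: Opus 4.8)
The plan is to transport the entire combinatorial structure through the monoidal equivalence of Theorem~\ref{teomoneq}. Since monoidal equivalence preserves the fusion semiring (it is a bijection on $\irr$ compatible with tensor products and morphism spaces), it suffices to prove the statement for $\HH$, the compact quantum subgroup of $\G\hat{\ast}SU_q(2)$ described in Theorem~\ref{teomoneq}, in place of $\wprhg$. This is exactly the strategy of \cite[Sec.~4]{lt14}, which treats the case $B=\C^n$, $\G$ of Kac type; one needs to check that nothing in that argument uses the Kac hypothesis or the commutativity of $B$ beyond what Theorem~\ref{intertwrpr} and Theorem~\ref{teomoneq} already provide in the present generality.

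First I would set up the indexing. By Theorem~\ref{teomoneq} the irreducibles $a(\alpha)$, $\alpha\in\irr(\G)$, $\alpha\neq 1_\G$, correspond to the irreducibles $s(\alpha)=b\otimes\alpha\otimes b$ of $\HH$, and every irreducible of $\HH$ sits inside a tensor product $\bigotimes_i s(\alpha_i)$. The combinatorial description of $\Hom$-spaces for $\HH$ (semi-decorated partitions in $NC(3k,3l)$: the $0,2\bmod 3$ points forming a Temperley--Lieb diagram, the $1\bmod 3$ points forming a decorated partition with a morphism $S$) together with Theorem~\ref{intertwrpr} shows that, as in \cite{lt14}, the poset of subobjects of $s(\alpha_1)\otimes\cdots\otimes s(\alpha_k)$ is controlled by the ``noncrossing through-strings'' combinatorics. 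One then shows by induction on $k$ that the irreducible subrepresentations of $s(\alpha_1)\otimes\cdots\otimes s(\alpha_k)$ are exactly labelled by the words $x\in M$ obtained by iterating the fusion/concatenation moves of Definition~\ref{opermonm}, and that distinct words give inequivalent irreducibles --- the key input being the dimension formula $\sum_p\prod_v\dim\Hom(\alpha_{U_v},\beta_{L_v})$ of Theorem~\ref{intertwrpr}, which forces $\dim\Hom(r_x,r_y)=\delta_{x,y}$. The involution $\overline{r}_x=r_{\overline{x}}$ follows from the conjugation formulas in the remark preceding Theorem~\ref{intertwrpr} (the conjugate of $(\alpha_1,\dots,\alpha_k)$ being $(\overline{\alpha}_k,\dots,\overline{\alpha}_1)$), combined with $\overline{s(\alpha)}\simeq s(\overline{\alpha})$.

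For the fusion rules themselves, I would compute $\dim\Hom(r_x\otimes r_y, r_z)$ directly from the partition model and match it against the claimed decomposition
$$r_x\otimes r_y=\sum_{\substack{x=u,t \\ y=\bar{t},v}}r_{u,v}\oplus \sum_{\substack{x=u,t\;  y=\bar{t},v \\  u\neq\emptyset , v\neq \emptyset\\ w\in u.v}}r_{w}.$$
Realizing $r_x\subset\bigotimes a(\alpha_i)$ and $r_y\subset\bigotimes a(\beta_j)$ via minimal projections built from $T_{p,S}$'s, a morphism in $\Hom(r_x\otimes r_y,r_z)$ is a noncrossing partition connecting the two blocks; the noncrossing constraint means a block of $x$ can only be glued to a block of $y$ in a ``last-letter-to-first-letter'' nested fashion, which is precisely the decomposition of $x$ as $u,t$ and $y$ as $\bar t,v$ (the $t$ part cancels against $\bar t$ through cups, possibly leaving a leftover fused letter $w\in u.v$ when $u,v$ are both nonempty, or just concatenating to $u,v$ otherwise). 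Carrying this out at the level of $\HH$ with the extra Temperley--Lieb layer is where the bookkeeping lives, but the Temperley--Lieb part only contributes the already-understood $SU_q(2)$ cups and does not change the count.

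The main obstacle I anticipate is not any single deep step but the verification that the $\HH$-side combinatorial model genuinely behaves, in this generality, the way \cite[Thm.~4.x]{lt14} asserts in the Kac, $B=\C^n$ case: namely that the semi-decorated partition category has the expected simple objects and that the inductive decomposition of tensor products terminates with exactly the multiset $M$ and no extra multiplicities. Concretely the delicate point is showing the intertwiner count $\dim\Hom(r_x,r_x)=1$ (irreducibility of each $r_x$) and $\dim\Hom(r_x,r_y)=0$ for $x\neq y$, which amounts to a careful counting of well-decorated noncrossing partitions with prescribed block decorations using Theorem~\ref{intertwrpr}; once that orthogonality is in hand, the fusion formula is essentially forced by comparing dimensions on both sides. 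Since the state $\psi$ being a genuine $\delta$-form (rather than Kac) enters only through quantum dimensions and not through the $\Hom$-spaces, and $B$ being noncommutative enters only through Theorem~\ref{intertwrpr} which already covers it, I expect the \cite{lt14} argument to go through essentially verbatim with these two inputs substituted in.
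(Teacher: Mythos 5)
Your proposal follows exactly the paper's route: the paper proves the theorem in one line by invoking the monoidal equivalence of Theorem \ref{teomoneq} (monoidal equivalence preserving the fusion semiring) and citing the computation of the fusion rules of $\HH$ carried out in \cite{lt14}. The extra details you supply — the semi-decorated partition model for $\HH$, the orthogonality count $\dim\Hom(r_x,r_y)=\delta_{x,y}$, and the observation that the Kac/commutativity hypotheses of \cite{lt14} are not used beyond what Theorems \ref{intertwrpr} and \ref{teomoneq} already provide — are a faithful elaboration of what the paper leaves implicit (the paper itself remarks that a direct partition-counting proof is possible and "more elegant").
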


Because of the monoidal equivalence proved in Theorem \ref{teomoneq}, it is enough to verify that these are the fusion rules of $\HH$ and it has been done in \cite{lt14}. Let us note that it is possible (and more elegant) to give a direct proof of the preceding theorem, without relying on the monoidal equivalence.

\begin{remark}\label{remfondrep}
If $\G$ is a matrix quantum group then $\wprhg$ is again a matrix quantum group. More precisely, if $\alpha_1,\ldots,\alpha_n\in\irr(\G)\setminus\{1_G\}$ generate the representation category of $\G$ then $a(\alpha_1),\dots,a(\alpha_n)$ generate the representation category of $\wprhg$. Indeed, since the algebra generated by the coefficients of the $a(\alpha)$, for $\alpha\in\irr(\G)$, is dense in $C(\wprhg)$, it suffices to show that, for all $\alpha\in\irr(\G)\setminus\{1_\G\}$, $a(\alpha)$ is a subrepresentation of a tensor product of representations of the form $a(\alpha_i), a(\overline{\alpha}_i)$. Let $\alpha\in\irr(\G)$, $\alpha\neq 1_\G$, and write $\alpha\subset \alpha_{i_1}^{\varepsilon_1}\ot\dots\ot\alpha_{i_l}^{\varepsilon_l}$, where $i_1,\dots,i_l\in\{1,\ldots,n\}$ and $\varepsilon_k\in\{-1,1\}$ and $\alpha^\varepsilon:=\overline{\alpha}$ if $\varepsilon=-1$ or $\alpha^\varepsilon:=\alpha$ if $\varepsilon=1$. By making use of the fusion rules just found, we deduce that $a(\alpha)\subset a(\alpha_{i_1}^{\varepsilon_1})\ot\dots\ot a(\alpha_{i_l}^{\varepsilon_l})$.\end{remark}

When the state $\psi$ is not a $\delta$-form, it is still possible to find the irreducible representations and the fusion rules. Indeed, we show in the next proposition how to reduce the general case to the case of a $\delta$-form by using a free product decomposition. Then, one may use the description of the irreducible representations and the fusion rules of a free product (\cite{wan95}) together with Theorem \ref{ThmFusion} to obtain the irreducible representation and the fusion rules for any given faithful state $\psi$. This free product decomposition will also be useful to extend algebraic and analytical properties from the $\delta$-form case to the general case.

\begin{prop}\label{freeprodcqggen}
Let $B=\bigoplus_{T=1}^c M_{n_T}(\C)$ be a finite dimensional C*-algebra and $\psi=\bigoplus_{T=1}^c \Tr(Q_T\cdot)$ a state on $B$.
Consider the decomposition $B=\bigoplus_{i=1}^{d}B_{i}$ obtained by summing up all the matrix spaces $M_{n_T}(\C)$ with a common value of $\Tr(Q_T^{-1})$ in a unique summand $B_i$ and let $\delta_i$ be the common value of $\Tr(Q_T^{-1})$ on the summand $B_i$. Let $\psi_i\,:\,B_i\rightarrow \C$ be the $\delta_i$-form defined by $\psi_i:=\psi_i(1_{B_i})^{-1}\psi_{|_{B_i}}$. Then there is a canonical isomorphism of compact quantum groups
$\G\wr_{*} \aut\cong \hat{\ast}_{i=1}^d \G\wr_{*}\G^{aut}(B_i,\psi_i)$.
\end{prop}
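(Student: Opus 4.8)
The plan is to construct the isomorphism at the level of the defining generators and relations, exhibiting a $*$-isomorphism between the two universal C*-algebras that intertwines the comultiplications. First I would recall that, by definition, $C(\G\wr_* \aut)$ is generated by the coefficients of unitaries $a(\alpha)\in\mathcal{L}(B\ot H_\alpha)\ot C(\G\wr_*\aut)$ satisfying the relations of Definition \ref{deffwpg} (with $d$ trivial), and $C(\hat\ast_{i=1}^d\G\wr_*\G^{aut}(B_i,\psi_i))$ is the free product of the algebras $C(\G\wr_*\G^{aut}(B_i,\psi_i))$, each generated by coefficients of unitaries $a_i(\alpha)\in\mathcal{L}(B_i\ot H_\alpha)\ot C(\G\wr_*\G^{aut}(B_i,\psi_i))$. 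Using the orthogonal decomposition $B=\bigoplus_{i=1}^d B_i$ and writing $q_i\in\mathcal{L}(B)$ for the orthogonal projection onto $B_i$, the natural candidate map $\Phi$ sends $a(\alpha)$ to $\bigoplus_{i=1}^d a_i(\alpha)\in\bigoplus_i\mathcal{L}(B_i\ot H_\alpha)\ot C(\hat\ast\cdots)\subset\mathcal{L}(B\ot H_\alpha)\ot C(\hat\ast\cdots)$; equivalently, $(q_i\ot\id)a(\alpha)$ should correspond to $a_i(\alpha)$ sitting in the $i$-th free factor. Conversely, the candidate inverse sends $a_i(\alpha)$ to $(q_i\ot\id_{H_\alpha}\ot 1)\Phi^{-1}$-preimage, i.e. to the compression $(q_i\ot\id)a(\alpha)$ viewed inside $C(\G\wr_*\aut)$.

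**Key steps.**
The argument breaks into two halves. (1) \emph{$\Phi$ is well defined:} I must check that $\bigoplus_i a_i(\alpha)$ satisfies all the relations defining $C(\G\wr_*\aut)$. Unitarity is clear since each $a_i(\alpha)$ is unitary on $B_i\ot H_\alpha$ and the $B_i$ are orthogonal. The relation $\eta_B\in\Hom(1,a(1_\G))$ follows because $\eta_B(1)=\sum_i\eta_{B_i}(1)$ and each $\eta_{B_i}\in\Hom(1,a_i(1_\G))$ (up to the positive scalar $\psi_i(1_{B_i})^{1/2}$ coming from the renormalization $\psi_i=\psi_i(1_{B_i})^{-1}\psi_{|B_i}$, which one tracks carefully since the multiplication map $m_{B_i}$ and its adjoint rescale accordingly). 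The crucial relation is $\widetilde{m\ot S}\in\Hom(a(\alpha)\ot a(\beta),a(\gamma))$ for $S\in\Hom(\alpha\ot\beta,\gamma)$: here one uses that $m_B$ \emph{preserves} the decomposition, $m_B(B_i\ot B_j)=0$ for $i\neq j$ and $m_B(B_i\ot B_i)\subset B_i$, so that $m\ot S$ decomposes as $\bigoplus_i(m_{B_i}\ot S)$, and the cross-terms between different free factors vanish by this block structure; within each block the relation holds by definition of $a_i$. (2) \emph{$\Psi$ is well defined and inverse to $\Phi$:} I check that the compressions $(q_i\ot\id)a(\alpha)$, regarded as elements of $\mathcal{L}(B_i\ot H_\alpha)\ot C(\G\wr_*\aut)$, satisfy the relations of $C(\G\wr_*\G^{aut}(B_i,\psi_i))$ — again unitarity, the $\eta_{B_i}$ relation, and the $\widetilde{m_{B_i}\ot S}$ relation, all of which follow from restricting the corresponding relations for $a(\alpha)$ to the $i$-th block and using $q_i m_B(q_i\ot q_i)=m_B(q_i\ot q_i)$. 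By the universal property of the free product, this yields $\Psi:C(\hat\ast_i\G\wr_*\G^{aut}(B_i,\psi_i))\to C(\G\wr_*\aut)$, and $\Phi\circ\Psi$, $\Psi\circ\Phi$ are the identity on generators, hence everywhere. Finally, a direct computation on generators shows $\Phi$ intertwines $\Delta$ with $\Delta_{\hat\ast}$, so it is an isomorphism of compact quantum groups.

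**Main obstacle.**
The routine part is the verification of unitarity and the $\eta$-relation; the genuine content — and the step I expect to require the most care — is showing that the \emph{interaction} relations $\widetilde{m\ot S}\in\Hom(a(\alpha)\ot a(\beta),a(\gamma))$ for $C(\G\wr_*\aut)$ force, and are forced by, the \emph{separate} relations in each free factor with \emph{no} surviving cross-terms. One must argue that $m_B$ vanishing on $B_i\ot B_j$ ($i\neq j$) is exactly what kills the off-diagonal coefficients, so that the single big family $\{a(\alpha)\}$ decouples into $d$ mutually free families $\{a_i(\alpha)\}_i$; conversely freeness of the factors in the free product is precisely what is needed to reassemble them into a representation of $C(\G\wr_*\aut)$ satisfying only the block-diagonal constraints. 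Keeping track of the normalization constants $\psi_i(1_{B_i})$ (equivalently, that $m_{B_i}^*$ relative to $\psi_i$ differs from $m_B^*|_{B_i}$ relative to $\psi$ by the scalar $\psi_i(1_{B_i})$, and these scalars are absorbed consistently on both sides) is the one place where a careless computation would break the isomorphism, so I would do that bookkeeping explicitly.
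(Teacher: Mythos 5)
Your overall strategy coincides with the paper's: build $\Psi$ from the universal property of $C(\G\wr_*\aut)$ by assembling the generators $a_i(\alpha)$ of the free factors into a block-diagonal unitary, and build the inverse from the universal properties of the $N_i:=C(\G\wr_*\G^{aut}(B_i,\psi_i))$ and of the maximal free product by compressing $a(\alpha)$ to the blocks $B_i\ot H_\alpha$. The first half of your argument (well-definedness of the map $a(\alpha)\mapsto\bigoplus_i a_i(\alpha)$, using that $m_B$ kills $B_i\ot B_j$ for $i\neq j$) is essentially the paper's computation and is fine.

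There is, however, a genuine gap in the inverse direction. You assert that the compressions $(q_i\ot\id)a(\alpha)(q_i\ot\id)$ are unitary and satisfy the relations of $N_i$ ``by restricting the corresponding relations for $a(\alpha)$ to the $i$-th block.'' But restricting only makes sense once you know that $a(\alpha)$ \emph{is} block-diagonal with respect to $B=\bigoplus_i B_i$, i.e.\ that $q_i\ot\id_{H_\alpha}\in\End(a(\alpha))$ for each $i$; for instance, unitarity of the compression requires $a(\alpha)(q_i\ot\id\ot 1)a(\alpha)^*=q_i\ot\id\ot 1$, which does not follow from unitarity of $a(\alpha)$ alone. This is precisely the nontrivial point, and it is where the hypothesis that the $\delta_i=\Tr(Q_T^{-1})$ are \emph{pairwise distinct} across the summands must enter — a hypothesis your proof never uses, which is a warning sign, since without it the statement is false (a $\delta$-form algebra split artificially into two summands with the same $\delta$ would give a free-product decomposition contradicting the fusion rules of Theorem~\ref{ThmFusion}). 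The paper closes this gap by observing that
$$x_\alpha:=(\widetilde{m\ot S})(\widetilde{m\ot S})^*=mm^*\ot\id_{H_\alpha}=\sum_{i=1}^d\delta_i\,q_i\ot\id_{H_\alpha}$$
lies in the C*-algebra $\End(a(\alpha))$ (taking $S=\id\in\Hom(\alpha\ot 1_\G,\alpha)$), and since the $\delta_i$ are pairwise distinct the spectral projections $q_i\ot\id_{H_\alpha}$ of $x_\alpha$ belong to $\End(a(\alpha))$ as well. You need this lemma (or an equivalent substitute) before any of the block-restriction arguments in your step (2) can be carried out. The normalization bookkeeping you flag is comparatively harmless: the defining relations involve $m$ and $\eta$ themselves rather than their adjoints, and rescaling the state rescales the inner product by a constant, so unitarity and the intertwiner relations are unaffected.
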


\begin{proof}
During the proof we use the notations $M=C(H_{(B,\psi)}^+(\G))$ and $N_i=C(H_{(B_i,\psi_i)}^+(\G))$ for $1\leq i\leq d$. Let $a(\alpha)\in\mathcal{L}(B\ot H_\alpha)\ot M$, $\alpha\in\irr(\G)$ be the family of generators of $M$ and let $a(\alpha)_i\in\mathcal{L}(B_i\ot H_\alpha)\ot N_i$, $\alpha\in\irr(\G)$, be the family of generators of $N_i$, for $1\leq i\leq d$. Let $m$, $\eta$ be the multiplication and the unity of $B$ and let $m_i$, $\eta_i$ be the multiplication and the unity of $B_i$. Moreover, let $\nu_i:B_i\longrightarrow B$ be the inclusions: it is a family of isometries such that $\nu_i\nu_i^*$ are pairwise orthogonal projections and $\sum_i\nu_i\nu_i^*=\id_B$. Define the element $v(\alpha)\in\mathcal{L}(B\ot H_\alpha)\ot \ast_{i=1}^d N_i$ by $$v(\alpha)=\sum_i(\nu_i\ot\id_{H_\alpha}\ot 1)a(\alpha)_i(\nu_i^*\ot\id_{H_\alpha}\ot 1)$$
We claim that there exists a unital $\ast$-homomorphism $\Psi: M\longrightarrow \ast_{i=1}^d N_i$ such that
$$(\id_{B\ot H_\alpha}\ot \Psi)a(\alpha)=v(\alpha).$$
By the universal property of the free wreath product, it suffices to check the following.
\begin{enumerate}
\item $v(\alpha)$ is unitary.
\item $\widetilde{m\ot S}\in\Hom(v(\alpha)\ot v(\beta),v(\gamma))$ for any $\alpha,\beta,\gamma\in\irr(\G)$ and $S\in\Hom(\alpha\ot \beta,\gamma)$.
\item $\eta\in\Hom(1,v(1))$.
\end{enumerate}

$(1).$ Since the $\nu_i\nu_i^*$ are pairwise orthogonal we have $\nu_i^*\nu_k=0$ if $i\neq k$ and $\nu_i^*\nu_i=\id_{B_i}$. It follows that
\begin{eqnarray*}
v(\alpha)v(\alpha)^* &=& \sum_{i,k}(\nu_i\ot\id\ot 1)a(\alpha)_i(\nu_i^*\ot\id\ot 1)(\nu_k\ot\id\ot 1)a(\alpha)_k^*(\nu_k^*\ot\id\ot 1)\\
&=& \sum_{i}(\nu_i\ot\id\ot 1)a(\alpha)_ia(\alpha)^*_i(\nu_i^*\ot\id\ot 1)=\id_B\ot \id\ot 1.
\end{eqnarray*}
Similarly, $v(\alpha)^*v(\alpha)= \id_B\ot \id\ot 1$.

$(2).$ Observe that $\nu_j^*m(\nu_i\ot \nu_k)=\delta_{ik}\delta_{ij}m$ and $\sum_i \nu_i m_i(\nu_i^*\ot \nu_i^*)=m$. Using this, we find that the element $(\widetilde{m\ot S}\ot 1)v(\alpha)\ot v(\beta)$ is equal to
\begin{eqnarray*}
&&((m\ot S)\Sigma_{23}\ot 1)\sum_{i,k}(\nu_i\ot \id\ot \nu_k\ot \id \ot 1)(a(\alpha)_i\ot a(\beta)_k)(\nu_i^*\ot \id\ot \nu_k^*\ot \id \ot 1)\\
&=&\sum_{i,k}((m(\nu_i\ot \nu_k)\ot S)\Sigma_{23}\ot 1)(a(\alpha)_i\ot a(\beta)_k)(\nu_i^*\ot \id\ot \nu_k^*\ot \id \ot 1)\\
&=&\sum_{i,j,k}(\nu_j\ot \id\ot 1)((\nu_j^*m(\nu_i\ot \nu_k)\ot S)\Sigma_{23}\ot 1)(a(\alpha)_i\ot a(\beta)_k)(\nu_i^*\ot \id\ot \nu_k^*\ot \id \ot 1)\\
&=&\sum_{i}(\nu_i\ot \id\ot 1)((m_i\ot S)\Sigma_{23}\ot 1)(a(\alpha)_i\ot a(\beta)_i)(\nu_i^*\ot \id\ot \nu_i^*\ot \id \ot 1)\\
&=&\sum_{i}(\nu_i\ot \id\ot 1)a(\gamma)_i((m_i(\nu_i^*\ot\nu_i^*)\ot S)\Sigma_{23}\ot 1)\\
&=&\sum_{i}(\nu_i\ot \id\ot 1)a(\gamma)_i(\nu_i^*\ot \id\ot 1)((\sum_k \nu_k m_k(\nu_k^*\ot\nu_k^*)\ot S)\Sigma_{23}\ot 1)\\
&=&v(\gamma)((m\ot S)\Sigma_{23}\ot 1).
\end{eqnarray*}
$(3).$ Observe that $\nu_i^*\eta=\eta_i$ and $\sum_i \nu_i\eta_i=\eta$. Hence,
\begin{eqnarray*}
v(1)(\eta\ot 1)&=&\sum_i(\nu_i\ot 1)a(1)_i(\nu_i^*\ot 1)(\eta\ot 1)=\sum_i(\nu_i\ot 1)a(1)_i(\eta_i\ot 1)\\
&=&\sum_i(\nu_i\ot 1)(\eta_i\ot 1)=\eta\ot 1.
\end{eqnarray*}
A simple verification allows us to show that this homomorphism intertwines the comultiplications. This ends the first part of the proof.

In order to construct the inverse homomorphism we first note that, for all $1\leq i\leq d$, we have $\nu_i\nu_i^*\ot \id_{H_\alpha}\in\Hom(a(\alpha),a(\alpha))$. Indeed, consider the morphism $\widetilde{m\otimes S}\in\Hom(a(\alpha)\otimes a(1_{\G}),a(\alpha))$, where $S\in\Hom(\alpha\otimes 1_{\G},\alpha)$ is the identity morphism, and observe that
$$x_\alpha:=(\widetilde{m\otimes S})\circ (\widetilde{m\otimes S})^*= mm^*\otimes \id_{H_\alpha}=\sum_{i=1}^d \delta_i\cdot \nu_i\nu_i^* \otimes \id_{H_\alpha} \in\Hom(a(\alpha),a(\alpha)).$$
Since the projections $\nu_i\nu_i^*\ot\id_{H_\alpha}\in\mathcal{L}(B\ot H_\alpha)$ are pairwise orthogonal, sum up to $1$ and the $\delta_i$ are pairwise distinct it follows that the spectrum of $x_\alpha$ is $\{\delta_i\,:\,1\leq i\leq d\}$ and $p_i$ is the orthogonal projection on the eigenspace of $\delta_i$. Since $\Hom(a(\alpha),a(\alpha))\subset\mathcal{L}(B\ot H_\alpha)$ is a C*-subalgebra and $x_\alpha\in \Hom(a(\alpha),a(\alpha))$, it follows that each spectral projection $\nu_i\nu_i^*\ot \id_{H_\alpha}$ of $x_\alpha$ is in $\Hom(a(\alpha),a(\alpha))$.

\vspace{0.2cm}

\noindent Now, for all $1\leq i\leq d$ define $v(\alpha)_i=(\nu_i^*\ot\id_{H_\alpha}\ot 1)a(\alpha)(\nu_i\ot\id_{H_\alpha}\ot 1)\in\mathcal{L}(B_i\ot H_\alpha)\ot M$. We claim that, for all $i$, there exists a unital $\ast$-homomorphism $\Phi_i: N_i\longrightarrow M$ such that
$$(\id_{B_i\ot H_\alpha}\ot \Phi_i)a(\alpha)_i=v(\alpha)_i.$$
By the universality of the C*-algebra $N_i$ it is enough to check that
\begin{enumerate}
\item $v(\alpha)_i$ is unitary
\item $(m_i\ot S)\Sigma_{23}\in\Hom(v(\alpha)_i\ot v(\beta)_i,v(\gamma)_i)$ for any $\alpha,\beta,\gamma\in\irr(\G)$ and $S\in\Hom(\alpha\ot \beta,\gamma)$
\item $\eta_i\in\Hom(1,v(1)_i)$
\end{enumerate}
$(1).$ We have
\begin{eqnarray*}
v(\alpha)_iv(\alpha)_i^*&=&(\nu_i^*\ot\id \ot 1)a(\alpha)(\nu_i\ot\id \ot 1)(\nu_i^*\ot\id \ot 1)a(\alpha)^*(\nu_i\ot\id \ot 1)\\
&=&(\nu_i^*\ot\id \ot 1)a(\alpha)(\nu_i\nu_i^*\ot\id \ot 1)a(\alpha)^*(\nu_i\ot\id \ot 1)\\ &=&(\nu_i^*\ot\id \ot 1)(\nu_i\nu_i^*\ot\id \ot 1)a(\alpha)a(\alpha)^*(\nu_i\ot\id \ot 1)=\id_{B_i}\ot\id \ot 1.
\end{eqnarray*}
Similarly, $v(\alpha)_i^*v(\alpha)_i= \id_{B_i}\ot \id\ot 1$.

$(2).$ Since $m_i=\nu_i^*m(\nu_i\ot \nu_i)$ we have: 
$$(\nu_i m_i(\nu_i^*\ot \nu_i^*)\ot S)\Sigma_{23}=(\nu_i\nu_i^*\ot \id)(m\ot S)\Sigma_{23}(\nu_i\nu_i^*\ot \id\ot \nu_i\nu_i^*\ot \id)\in\Hom(a(\alpha)_i\ot a(\beta)_i,a(\gamma)_i).$$
Hence, the element $((m_i\ot S)\Sigma_{23}\ot 1)v(\alpha)_i\ot v(\beta)_i$ is equal to
\begin{eqnarray*}
&&((m_i\ot S)\Sigma_{23}\ot 1)(\nu_i^*\ot \id\ot \nu_i^*\ot \id \ot 1)(a(\alpha)\ot a(\beta))(\nu_i\ot \id\ot \nu_i\ot \id \ot 1)\\
&=&((m_i(\nu_i^*\ot \nu_i^*)\ot S)\Sigma_{23}\ot 1)(a(\alpha)\ot a(\beta))(\nu_i\ot \id\ot \nu_i\ot \id \ot 1)\\
&=&(\nu_i^*\ot \id\ot 1)((\nu_i m_i(\nu_i^*\ot \nu_i^*)\ot S)\Sigma_{23}\ot 1)(a(\alpha)\ot a(\beta))(\nu_i\ot \id\ot \nu_i\ot \id \ot 1)\\
&=&(\nu_i^*\ot \id\ot 1)a(\gamma)((\nu_i m_i(\nu_i^*\ot \nu_i^*)\ot S)\Sigma_{23}\ot 1)(\nu_i\ot \id\ot \nu_i\ot \id \ot 1)\\
&=&(\nu_i^*\ot \id\ot 1)a(\gamma)((\nu_i m_i(\nu_i^*\nu_i\ot \nu_i^*\nu_i)\ot S)\Sigma_{23}\ot 1)\\
&=&(\nu_i^*\ot \id\ot 1)a(\gamma)(\nu_i\ot \id\ot 1)((m_i\ot S)\Sigma_{23}\ot 1)=v(\gamma)_i((m_i\ot S)\Sigma_{23}\ot 1).
\end{eqnarray*}
$(3).$ Since $\nu_i\eta_i=(\nu_i\nu_i^*)\eta\in\Hom(1,a(1))$ we have:
$$
v(1)_i(\eta_i\ot 1)=(\nu_i^*\ot 1)a(1)(\nu_i\ot 1)(\eta_i\ot 1)=(\nu_i^*\nu_i\ot 1)(\eta_i\ot 1)=\eta_i\ot 1.
$$
This completes the proof of the existence of the morphism $\Phi_i: N_i\longrightarrow M$, for all $i$. By the universal property of the maximal free product C*-algebra, there exists a unital $\ast$-homomorphism $\Phi:\ast_{i=1}^d N_i\longrightarrow M$ such that $(\id_{B_i\ot H_\alpha}\ot \Phi)a(\alpha)_i=v(\alpha)_i$ for all $i$. It is easy to check that $\Psi$ and $\Phi$ are inverse to each other and $\Phi$ intertwines the comultiplications.
\end{proof}

%%%%%%%%%%%%%%%%%%%%%%%%%%%%%%%%%%%%%%%%%%%%%%%%%%
\section{Stability properties of the free wreath product}
%%%%%%%%%%%%%%%%%%%%%%%%%%%%%%%%%%%%%%%%%%%%%%%%%%

In this section, we present some stability results concerning the operation of free wreath product.
More precisely, we prove that the free wreath product preserves the relation of monoidal equivalence and we find under which conditions two free wreath products have isomorphic fusion semiring.
 
We start by recalling a result from \cite{drvv10} about the monoidal equivalence of quantum automorphism groups.
\begin{theorem}\label{moneqqag}
Let $\psi$ be a $\delta$ form on $B$ and $\psi'$ be a $\delta'$ form on $B'$. Then $\aut$ and $\G^{aut}(B',\psi')$ are monoidally equivalent if and only if $\delta=\delta'$.
\end{theorem}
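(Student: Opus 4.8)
The plan is to reduce the statement, in the non-degenerate range $\delta\ge 4$, to the rigidity of the one-parameter family $SO_q(3)$ (the even part of $SU_q(2)$), and to deal with $\delta\le 3$ by hand. First recall that for a $\delta$-form $\psi$ the rescaled weight $\tpsi=\delta\psi$ is a $1$-form with $\tpsi(1)=\delta$, and that the quantum automorphism group is insensitive to this rescaling, so $\aut=\G^{aut}(B,\tpsi)$. Taking $\G$ to be the trivial compact quantum group in Theorem \ref{teomoneq} — so that $\G\wr_*\G^{aut}(B,\tpsi)=\aut$ and the quantum subgroup $\HH$ produced there is exactly the even part of $SU_q(2)$ — one obtains, whenever $\delta\ge 4$,
$$\aut\simeq_{mon}SO_q(3),\qquad q\in(0,1],\quad q+q^{-1}=\sqrt\delta.$$
Everything then hinges on the fact that $SO_q(3)\simeq_{mon}SO_{q'}(3)$ if and only if $q=q'$, plus a separate treatment of $\delta\le 3$.

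For the implication $(\Leftarrow)$, assume $\delta=\delta'$. If $\delta\le 3$, then from $\Tr(Q_T)\Tr(Q_T^{-1})\ge n_T^2$ and $\sum_T\Tr(Q_T)=1$ we get $n_T^2\le\delta\le 3$ for every block, so $B=\C^{\delta}$ with $\psi$ the uniform trace; hence $(B,\psi)\cong(B',\psi')$ and the two quantum automorphism groups are isomorphic, in particular monoidally equivalent. If $\delta=\delta'\ge 4$, then $\sqrt\delta=\sqrt{\delta'}$, and since $t\mapsto t+t^{-1}$ is injective on $(0,1]$ the corresponding deformation parameters agree, $q=q'$; by transitivity of monoidal equivalence, $\aut\simeq_{mon}SO_q(3)=SO_{q'}(3)\simeq_{mon}\G^{aut}(B',\psi')$.

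For $(\Rightarrow)$, assume $\aut\simeq_{mon}\G^{aut}(B',\psi')$. A monoidal equivalence furnishes a bijection $\phi$ of the irreducibles with $\phi(1)=1$ preserving fusion rules and quantum dimensions; in particular it preserves the cardinality of $\irr$. When $\dim B\ge 4$ there are infinitely many irreducibles (e.g. by the monoidal equivalence above with $SO_q(3)$, or directly from Theorem \ref{intertautb}), while when $\dim B\le 3$ there are finitely many, so $\aut$ and $\G^{aut}(B',\psi')$ lie in the same regime. In the finite regime they are among $\{e\}$, $\mathbb Z_2$, $S_3$, for which $\delta$ coincides with the number of irreducible representations, hence $\delta=\delta'$. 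In the infinite regime, $\aut\simeq_{mon}SO_q(3)$ and $\G^{aut}(B',\psi')\simeq_{mon}SO_{q'}(3)$, so $SO_q(3)\simeq_{mon}SO_{q'}(3)$; the irreducibles $r_0=1,r_1,r_2,\dots$ of $SO_q(3)$ have quantum dimensions $[2k+1]_q$, which for $q\in(0,1]$ are strictly increasing in $k$, so $r_1$ is the unique nontrivial irreducible of least quantum dimension and is matched by $\phi$ with the corresponding $r_1'$. Consequently $q^2+1+q^{-2}=\dim_q(r_1)=\dim_q(r_1')=(q')^2+1+(q')^{-2}$, and since $t\mapsto t^2+t^{-2}$ is injective on $(0,1]$ we conclude $q=q'$, whence $\delta=\delta'$.

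The step I expect to be the main obstacle is $(\Rightarrow)$: one must isolate $\delta$ as a genuine monoidal invariant. The key is that monoidal equivalence preserves \emph{quantum} dimension rather than classical dimension, and that the decisive object (the nontrivial part of the fundamental representation, equivalently $r_1$ of $SO_q(3)$) is pinned down categorically by the minimality of its quantum dimension. The degenerate range $\delta\le 3$ — where Theorem \ref{teomoneq} does not apply and this minimality argument fails (for $S_3$ the sign representation also has quantum dimension $1$) — must be split off and settled by the elementary count of irreducible representations.
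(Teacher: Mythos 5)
Your proof is correct, but it takes a genuinely different route from the paper's. The paper attributes the full equivalence to \cite{drvv10} and only re-proves the direction $\delta=\delta'\Rightarrow\aut\simeq_{mon}\G^{aut}(B',\psi')$, by sending $T_p\mapsto T'_p$ on noncrossing partitions and checking compatibility with $\ot$, $*$ and $\circ$ (the last step being where $\delta=\delta'$ enters, via $T_pT_q=\delta^{cy(q,p)}T_{pq}$), then extending by Proposition \ref{propgenmoneq}; that argument implicitly lives in the regime $\dim B\geq 4$, since it needs the $T_p$ to form a basis (Theorem \ref{intertautb}). You instead specialize Theorem \ref{teomoneq} to trivial $\G$ to get $\aut\simeq_{mon}SO_q(3)$ with $q+q^{-1}=\sqrt{\delta}$, which is legitimate and non-circular (that theorem precedes this one and does not use it), and your reduction is consistent: $\delta\geq\sum_Tn_T^2=\dim B$ always, and $\dim B\leq 3$ forces $B=\C^{\delta}$ with the uniform trace, so the ranges $\delta\geq 4$ and $\dim B\geq 4$ coincide and the degenerate cases are exactly $S_1,S_2,S_3$. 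What your approach buys is threefold: it settles the low-dimensional cases that the paper's combinatorial map does not cover; it actually proves the converse direction (via the monoidal invariance of quantum dimension and the strict monotonicity of $[2k+1]_q$, pinning down $[3]_q$ and hence $q$), which the paper leaves entirely to the citation; and it isolates $\delta$ as a categorical invariant. What the paper's direct combinatorial proof buys is that it generalizes verbatim to the stability theorem for free wreath products proved immediately afterwards, where the decorated partitions $T_{p,S}$ play the role of the $T_p$ — a generalization your $SO_q(3)$ detour does not provide.
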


We provide below a different proof (and we believe a simpler proof) of the first "if" using the description of the intwerners in terms of non crossing partitions discovered in \cite{pit14}.

\begin{proof}
Let $u$ (resp. $u'$) be the fundamental representation of $\aut$ (resp. $\G^{aut}(B',\psi')$). Let $\phi\,:\,\{u^{\ot k}\,:\,k\in\N\}\rightarrow\{u'^{\ot k}\,:\,k\in\N\}$ be the bijection defined by $\phi(u^{\ot k})=u'^{\ot k}$ for all $k\in\N$. We use the same notation to denote, for every $k,l\in\N$, the map $\phi(T_p)=T'_p$, where $T_p$ is the morphism in $\Hom_{\aut}(u^{\otimes k},u^{\otimes l})$ associated to a noncrossing partition $p\in NC(k,l)$ and $T_p'$ is the morphism in $\Hom_{\G^{aut}(B',\psi')}(u'^{\otimes k},u'^{\otimes l})$ associated to the same noncrossing partition $p$. As the $T_p$ and the $T'_p$ are a basis of the respective spaces of intertwiners, $\phi$ can be extended by linearity to a linear isomorphism
$$\phi:\Hom_{\aut}(u^{\otimes k},u^{\otimes l})\longrightarrow \Hom_{\G^{aut}(B',\psi')}(u'^{\otimes k},u'^{\otimes l})$$
It is clear that $\phi(\id)=\id$ because $\phi(T_{|\cdots|})=T'_{|\cdots|}$, where $|\cdots|$ is the noncrossing partition in $NC(k,k)$ which connects each of the $k$ upper points to the respective lower point.

The second property which is required is the compatibility with the tensor product: $\phi(P\otimes Q)=\phi(P)\otimes \phi(Q)$ for all $P,Q$ morphisms. For $P=T_p$ and $Q=T_q$ we have $\phi(T_p\otimes T_q)=\phi(T_{p\otimes q})=T'_{p\otimes q}=T'_p\otimes T'_q=\phi(T_p)\otimes\phi(T_q)$. The results holds for all the pairs $P,Q$ of morphisms by linearity of $\phi$.

The third property is the compatibility with respect to the adjoint: $\phi(P^*)=\phi(P)^*$ for all morphisms $P$. If $P=T_p$ we have $\phi(T_p^*)=\phi(T_{p^*})=T'_{p^*}=T'^*_{p}=\phi(T_p)^*$. The results holds for all morphisms $P$ by linearity of $\phi$.

The compatibility with the composition is a little more subtle and it is the part of the proof where the hypothesis $\delta=\delta'$ is used. We want to prove that $\phi(S\circ R)=\phi(S)\circ \phi(R)$ for all $R,S$ composable morphisms. Suppose $S=T_p$ and $R=T_q$, then we have $\phi(T_p\circ T_q)=\phi(\delta^{cy(p,q)}T_{pq})=\delta^{cy(p,q)}T'_{pq}=T'_p\circ T'_q=\phi(T_p)\circ \phi(T_q)$, where the second to last equality is true only because of the assumption $\delta=\delta'$. The results holds for all the pairs of composable morphisms by linearity of $\phi$.

In order to complete the proof, we observe that $\phi$ is an equivalence between the categories $$\mathscr{C}=\{(u^{\otimes k}, k\in \N),(\Hom(u^{\otimes k},u^{\otimes l}),\,k,l\in\N)\}\text{ and }\mathscr{C}'=\{(u'^{\otimes k}, k\in \N),(\Hom(u'^{\otimes k},u'^{\otimes l}),\,k,l\in\N)\}.$$
Then, by applying Proposition \ref{propgenmoneq}, it is possible to extend $\phi$ to an equivalence $\tilde{\phi}$ between the completion of the two categories with respect to direct sums and sub-objects and the monoidal equivalence is proved.\end{proof}
Using the same approach, we can prove the following result.

\begin{theorem}
Let $B$ (resp. $B'$) be a finite dimensional C*-algebras of dimension at least $4$ with a $\delta$ (resp. $\delta'$)-form $\psi$ (resp. $\psi'$). If $\G_1$ and $\G_2$ are monoidally equivalent and $\delta=\delta'$ then $H^+_{(B,\psi)}(\G_1)$ and $H^+_{(B',\psi')}(\G_2)$ are monoidally equivalent.
\end{theorem}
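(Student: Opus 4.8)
The plan is to imitate the proofs of Theorems \ref{moneqqag} and \ref{teomoneq}: rather than handling the irreducibles of the two free wreath products directly, I construct a unitary monoidal equivalence $F$ between the rigid C*-tensor categories $\mathscr{NC}_{\G_1}$ and $\mathscr{NC}_{\G_2}$ associated to $H^+_{(B,\psi)}(\G_1)$ and $H^+_{(B',\psi')}(\G_2)$, and then apply Proposition \ref{propgenmoneq}. Indeed, by Theorem \ref{intertwrpr} the morphism spaces of $\mathscr{NC}_{\G_i}$ are the full intertwiner spaces between tensor products of the generating representations $a(\alpha)$, and since every irreducible of the free wreath product sits inside such a tensor product, the completion of $\mathscr{NC}_{\G_1}$ (resp. $\mathscr{NC}_{\G_2}$) with respect to direct sums and subobjects is $\mathcal{R}(H^+_{(B,\psi)}(\G_1))$ (resp. $\mathcal{R}(H^+_{(B',\psi')}(\G_2))$). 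The one genuinely delicate point, where the hypotheses $\G_1\simeq_{mon}\G_2$ and $\delta=\delta'$ are both used, is the compatibility of $F$ with composition; the rest is bookkeeping with Proposition \ref{propmapdec} and Lemma \ref{lintp}.

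Fix a bijection $\phi_0\colon\irr(\G_1)\to\irr(\G_2)$ together with the intertwiner isomorphisms of Definition \ref{defmoneq}. Define $F$ on objects by applying $\phi_0$ letterwise, $F(\alpha_1,\dots,\alpha_k)=(\phi_0(\alpha_1),\dots,\phi_0(\alpha_k))$; this is a bijection of the underlying monoids, compatible with concatenation and with the involution. Since $\phi_0$ preserves the dimensions of $\Hom$-spaces, a noncrossing partition decorated by $\alpha_\bullet$ above and $\beta_\bullet$ below lies in $NC_{\G_1}(\alpha_1,\dots;\beta_1,\dots)$ if and only if, decorated by $\phi_0(\alpha)_\bullet$ and $\phi_0(\beta)_\bullet$, it lies in $NC_{\G_2}(\phi_0(\alpha_1),\dots;\phi_0(\beta_1),\dots)$, and for each block $b_v$ one has $\dim\Hom_{\G_1}(\alpha_{U_v},\beta_{L_v})=\dim\Hom_{\G_2}(\phi_0(\alpha_{U_v}),\phi_0(\beta_{L_v}))$. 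On morphisms I want $F(T_{p,S})=T'_{p,\phi_0(S)}$, where for $S=\bigotimes_v S_v$ I set $\phi_0(S):=\bigotimes_v\phi_0(S_v)$ ($\phi_0$ being the intertwiner isomorphism applied to the relevant tensor product of irreducibles). To check well-definedness, choose for each admissible $p$ a basis of $\bigotimes_v\Hom_{\G_1}(\alpha_{U_v},\beta_{L_v})$; its $\phi_0$-image is a basis of $\bigotimes_v\Hom_{\G_2}(\dots)$, and by the dimension formula in Theorem \ref{intertwrpr} the resulting families $\{T_{p,S}\}$ and $\{T'_{p,\phi_0(S)}\}$ are bases of the two intertwiner spaces. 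Declaring $F$ to match these bases and extending linearly gives a linear isomorphism on each morphism space, and $F(T_{p,S})=T'_{p,\phi_0(S)}$ then holds for \emph{all} $S$ by Lemma \ref{lintp} and linearity of $\phi_0$.

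It remains to verify $F$ is a unitary monoidal functor. The identity of $(\alpha_1,\dots,\alpha_k)$ is $T_{p,\id}$ for the through-string partition $p$, and $F$ sends it to $T'_{p,\phi_0(\id)}=T'_{p,\id}=\id$ since $\phi_0(\id)=\id$. Compatibility with $\otimes$ and with $*$ follows from Proposition \ref{propmapdec}(1)--(2) together with $\phi_0(f\otimes g)=\phi_0(f)\otimes\phi_0(g)$ and $\phi_0(f^*)=\phi_0(f)^*$; e.g. $F(T_{p,S}\otimes T_{q,S'})=F(T_{p\otimes q,S\otimes S'})=T'_{p\otimes q,\phi_0(S)\otimes\phi_0(S')}=T'_{p,\phi_0(S)}\otimes T'_{q,\phi_0(S')}$, and $F(T_{p,S}^*)=F(T_{p^*,S^*})=T'_{p^*,\phi_0(S)^*}=(T'_{p,\phi_0(S)})^*$, so $F$ is $*$-preserving, hence unitary. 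For composition, Proposition \ref{propmapdec}(3a) gives $T_{q,S'}\,T_{p,S}=\delta^{cy(p,q)}\,T_{qp,S'S}$, where $S'S$ is the morphism carried by the merged blocks of $qp$, assembled from the $S_v$ and $S'_w$ using only tensor products and compositions; since $\phi_0$ commutes with those operations, $\phi_0(S'S)=\phi_0(S')\,\phi_0(S)$, whence, using $\delta=\delta'$,
$$F\big(T_{q,S'}\,T_{p,S}\big)=\delta^{cy(p,q)}\,T'_{qp,\phi_0(S')\phi_0(S)}=(\delta')^{cy(p,q)}\,T'_{qp,\phi_0(S')\phi_0(S)}=T'_{q,\phi_0(S')}\,T'_{p,\phi_0(S)},$$
and the general case follows by bilinearity. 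Thus $F\colon\mathscr{NC}_{\G_1}\to\mathscr{NC}_{\G_2}$ is a unitary monoidal equivalence (an inverse is built from $\phi_0^{-1}$ in the same way), and Proposition \ref{propgenmoneq} extends it to a unitary monoidal equivalence between $\mathcal{R}(H^+_{(B,\psi)}(\G_1))$ and $\mathcal{R}(H^+_{(B',\psi')}(\G_2))$, i.e. $H^+_{(B,\psi)}(\G_1)\simeq_{mon}H^+_{(B',\psi')}(\G_2)$. The step requiring real care is the identity $\phi_0(S'S)=\phi_0(S')\phi_0(S)$: one must make explicit that the recipe producing the block morphism $S'S$ from the $S_v,S'_w$ involves only the monoidal-category operations that $\phi_0$ respects by Definition \ref{defmoneq} — everything else is routine.
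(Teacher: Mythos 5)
Your proposal is correct and follows essentially the same route as the paper's proof: define $\Phi(T_{p,S})=T'_{p,\phi(S)}$, extend by linearity using the linear independence of the $T_{p,S}$ and Lemma \ref{lintp}, verify the four compatibilities of Definition \ref{defmoneq} via Proposition \ref{propmapdec} (with $\delta=\delta'$ entering only in the composition step), and conclude by Proposition \ref{propgenmoneq}. Your explicit flagging of the identity $\phi(S'S)=\phi(S')\phi(S)$ as the point needing justification is a welcome precision that the paper glosses over, but it does not change the argument.
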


\begin{proof}
Let $\phi:\irr(\G_1)\longrightarrow\irr(\G_2)$ be the map which establishes the monoidal equivalence between $\G_1$ and $\G_2$. 
The proof is divided into two parts: firstly, we define the map $\Phi$ (which satisfies the properties of the monoidal equivalence) on the basic representations which generate the two free wreath products and later on we will observe that we can extend $\Phi$ to all the irreducible representations.
Let us denote by $a(\alpha), \alpha\in\irr(\G_1)$ (resp. $a'(\beta), \beta\in\irr(\G_2)$) the basic representations of $H^+_{(B,\psi)}(\G_1)$ (resp. $H^+_{(B',\psi')}(\G_2)$). Let $\phi\,:\,\{a(\alpha)\,:\,\alpha\in\irr(\G_1)\setminus\{1\}\}\sqcup\{1\}\rightarrow\{a'(\beta)\,:\,\beta\in\irr(G_2)\setminus\{1\}\}\sqcup\{1\}$ be the bijection defined by $\Phi(a(\alpha))=a'(\phi(\alpha))$ and $\Phi(1)=1$.

We use the same notation to define, for every $\alpha_1,...,\alpha_k,\beta_1,...,\beta_l\in\irr(\G_1)$, the map $$\Phi(T_{p,S})=T'_{p,\phi(S)}$$ where $T_{p,S}$ is the morphism in $\Hom_{H^+_{(B,\psi)}(\G_1)}(a(\alpha_1)\otimes...\otimes a(\alpha_k),a(\beta_1)\otimes...\otimes a(\beta_l))$ associated to a noncrossing partition $p\in NC_{\G_1}(\alpha_1,...,\alpha_k;\beta_1,...,\beta_l)$ decorated with an intertwiner $S$ and $T'_{p,\phi(S)}$ is the morphism in $\Hom_{H^+_{(B',\psi')}(\G_2)}(a'(\phi(\alpha_1))\otimes...\otimes a'(\phi(\alpha_k)),a'(\phi(\beta_1))\otimes...\otimes a'(\phi(\beta_l)))$ associated to the same noncrossing partition $p$, decorated with the intertwiner $\phi(S)$. 

Recall that the morphisms of type $T_{p,S}$ and $T'_{p,\phi(S)}$ linearly span the respective spaces of intertwiners, the maps $T_p$ $(T'_p)$ associated to distinct noncrossing partitions are linearly independent, $\phi$ is a linear isomorphism and, by Lemma \ref{lintp}, the maps $S\mapsto T_{p,S}$ and $S\mapsto T'_{p,\phi(S)}$ are linear. The map $\Phi$ can then be extended by linearity to a linear isomorphism:
$$
\Hom(a(\alpha_1)\otimes...\otimes a(\alpha_k),a(\beta_1)\otimes...\otimes a(\beta_l))\longrightarrow
\Hom(a'(\phi(\alpha_1))\otimes...\otimes a'(\phi(\alpha_k)),a'(\phi(\beta_1))\otimes...\otimes a'(\phi(\beta_l)))
$$
Let us check that $\Phi$ satisfies the conditions of Definition \ref{defmoneq}.

The first condition $\Phi(\id)=\id$ is clear because $\Phi(T_{|\cdots|,\id})=T'_{|\cdots|,\phi(\id)}$ and $\phi(\id)=\id$.

The second property which is required is the compatibility with the tensor product: $\Phi(P\otimes Q)=\Phi(P)\otimes \Phi(Q)$ for all $P,Q$ morphisms. If $P=T_{p,S}$ and $Q=T_{q,R}$ we have $\Phi(T_{p,S}\otimes T_{q,R})=\Phi(T_{p\otimes q,S\otimes R})=T'_{p\otimes q,\phi(S\otimes R)}=T'_{p\otimes q,\phi(S)\otimes \phi(R)}=T'_{p,\phi(S)}\otimes T'_{q,\phi(R)}=\Phi(T_{p,S})\otimes\Phi(T_{q,R})$. The results holds for all the pairs $P,Q$ of morphisms by linearity of $\Phi$.

The third property is the compatibility with respect to the adjoint: $\Phi(P^*)=\Phi(P)^*$ for all morphisms $P$. If $P=T_{p,S}$ we have $\Phi(T_{p,S}^*)=\Phi(T_{p^*,S^*})=T'_{p^*,\phi(S^*)}=T'_{p^*,\phi(S)^*}=T'^*_{p,\phi(S)}=\Phi(T_{p,S})^*$. The results holds for all the morphisms $P$ by linearity of $\Phi$.

The last condition is the compatibility with respect to the composition: $\Phi(P\circ Q)=\Phi(P)\circ \Phi(Q)$ for all composable morphisms $P,Q$. We observe that, because of Theorem \ref{moneqqag}, we have $\delta=\delta'$.
Now, suppose that $P=T_{p,S}$ and $Q=T_{q,R}$. We have $\Phi(T_{p,S}\circ T_{q,R})=\Phi(\delta^{cy(p,q)}T_{qp,RS})=\delta^{cy(p,q)}T'_{qp,\phi(RS)}= \delta^{cy(p,q)}T'_{qp,\phi(R)\circ\phi(S)}=T'_{p,\phi(R)}\circ T'_{q,\phi(S)}=\Phi(T_{p,R})\circ\Phi(T_{q,S})$. The results holds for all the pairs $P,Q$ of composable morphisms by linearity of $\Phi$.\\
As in the proof of Theorem \ref{moneqqag}, we can observe that $\Phi$ is an equivalence between the categories 
$$\mathscr{C}=\{(\bigotimes_{i=1}^k a(\alpha_i), \alpha_i\in\irr(\G_1), k\in \N),(\Hom(\bigotimes_{i=1}^k a(\alpha_i),\bigotimes_{j=1}^l a(\beta_i)),\, \alpha_i,\beta_i\in\irr(\G_1)\,k,l\in\N)\}$$
and 
$$\mathscr{D}=\{(\bigotimes_{i=1}^k a'(\alpha_i), \alpha_i\in\irr(\G_2), k\in \N),(\Hom(\bigotimes_{i=1}^k a'(\alpha_i),\bigotimes_{j=1}^l a'(\beta_i)),\, \alpha_i,\beta_i\in\irr(\G_2),\,k,l\in\N)\}$$
By Proposition \ref{propgenmoneq} we extend $\Phi$ to an equivalence $\tilde{\Phi}$ between the completion of the two categories with respect to the direct sums and  the sub-objects  which are the representation categories of $H^+_{(B,\psi)}(\G_1)$ and $H^+_{(B',\psi')}(\G_2)$ respectively. The extension $\tilde{\Phi}$ is the required monoidal equivalence.
\end{proof}

Given a compact quantum group $\G$, we denote by $R^+(\G)$ its fusion semi-ring.

\begin{theorem}
Let $B,B'$ be two finite dimensional C*-algebras of dimension at least $4$ endowed with the $\delta$-form $\psi$ and the $\delta'$-form $\psi'$ respectively and $\G_1$, $\G_2$ two compact quantum groups. If $R^+(\G1)\simeq R^+(\G_2)$ then $R^+(H^+_{(B,\psi)}(\G_1))\simeq R^+(H^+_{(B',\psi')}(\G_2))$.
\end{theorem}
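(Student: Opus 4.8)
The plan is to exploit the fact that, by Theorem \ref{ThmFusion}, the fusion semiring $R^+(H^+_{(B,\psi)}(\G))$ depends only on the fusion semiring $R^+(\G)$ and not at all on the finite quantum graph $(B,\psi)$, and in particular not on $\delta$. This is why, in contrast with the monoidal equivalence results above, no hypothesis relating $\delta$ and $\delta'$ is needed here. Concretely, Theorem \ref{ThmFusion} identifies $\irr(H^+_{(B,\psi)}(\G))$ with the monoid $M$ of words over $\irr(\G)$, and the involution $\overline{r}_x=r_{\overline x}$ together with the fusion rules on the classes $r_x$, $x\in M$, are expressed through Definition \ref{opermonm} purely in terms of the involution $\alpha\mapsto\overline{\alpha}$ on $\irr(\G)$ and the tensor product multiplicities $\dim\Hom_{\G}(\gamma,\alpha\otimes\beta)$, that is, purely in terms of the based semiring $R^+(\G)$.

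The first step is to record the elementary fact that any isomorphism of fusion semirings $\phi\colon R^+(\G_1)\to R^+(\G_2)$ restricts to a bijection $\phi\colon\irr(\G_1)\to\irr(\G_2)$ which sends $1_{\G_1}$ to $1_{\G_2}$, commutes with the involution, and preserves the fusion multiplicities, i.e. $\dim\Hom_{\G_2}(\phi(\gamma),\phi(\alpha)\otimes\phi(\beta))=\dim\Hom_{\G_1}(\gamma,\alpha\otimes\beta)$ for all $\alpha,\beta,\gamma\in\irr(\G_1)$. Indeed $\irr(\G_i)$ is the canonical $\N$-basis of $R^+(\G_i)$: its elements are exactly the nonzero elements that cannot be written as a sum of two nonzero elements, a property preserved by $\phi$ since $\phi$ is in particular an isomorphism of the underlying additive monoids. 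The unit $1_{\G_i}$ is the multiplicative identity, hence preserved; $\overline{\alpha}$ is characterized inside $R^+(\G_i)$ as the unique basis element $\beta$ such that $1_{\G_i}$ occurs in the product $\alpha\otimes\beta$, so it too is preserved; and preservation of the fusion multiplicities is simply preservation of the structure constants in the distinguished basis.

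Next I would extend $\phi$ letterwise to a bijection $\Phi\colon M_1\to M_2$ between the word monoids over $\irr(\G_1)$ and $\irr(\G_2)$, sending $(\alpha_1,\dots,\alpha_k)$ to $(\phi(\alpha_1),\dots,\phi(\alpha_k))$ and the empty word to the empty word. By the previous step $\Phi$ intertwines the three operations of Definition \ref{opermonm}: this is immediate for the involution and the concatenation, and for the fusion of two non-empty words it holds because the subrepresentations $\gamma\subset\alpha_k\otimes\beta_1$, counted with multiplicity $\dim\Hom_{\G_1}(\gamma,\alpha_k\otimes\beta_1)$, correspond under $\phi$ to the subrepresentations $\phi(\gamma)\subset\phi(\alpha_k)\otimes\phi(\beta_1)$ with the same multiplicities. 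Invoking Theorem \ref{ThmFusion} on both sides, the $\N$-linear extension of $r_x\mapsto r_{\Phi(x)}$ then transports the fusion rule
$$r_x\otimes r_y=\sum_{\substack{x=u,t\\ y=\bar t,v}}r_{u,v}\oplus\sum_{\substack{x=u,t\; y=\bar t,v\\ u\neq\emptyset,\ v\neq\emptyset\\ w\in u.v}}r_w$$
of $H^+_{(B,\psi)}(\G_1)$ to the corresponding identity for $H^+_{(B',\psi')}(\G_2)$, and likewise transports the involution $\overline{r}_x=r_{\overline x}$; hence it is an isomorphism $R^+(H^+_{(B,\psi)}(\G_1))\to R^+(H^+_{(B',\psi')}(\G_2))$ of fusion semirings.

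The only genuinely non-formal point is the first step: verifying that an abstract isomorphism of fusion semirings automatically respects the distinguished basis $\irr$, the unit, and the involution. Once this standard fact about based semirings with duality is granted, everything else is a direct unwinding of Theorem \ref{ThmFusion} and Definition \ref{opermonm}, and I expect no further difficulty; in particular, the role played by $\delta$ in the monoidal equivalence arguments simply does not arise, since the fusion rules are $\delta$-independent.
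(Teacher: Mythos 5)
Your proposal is correct and follows essentially the same route as the paper: restrict the semiring isomorphism $\phi$ to a bijection $\irr(\G_1)\to\irr(\G_2)$, extend it letterwise to the word monoid $M$, check compatibility with the involution, concatenation and fusion operations of Definition \ref{opermonm}, and then transport the fusion rules of Theorem \ref{ThmFusion}. The only difference is that you spell out the preliminary fact that an abstract isomorphism of fusion semirings automatically preserves the distinguished basis, the unit and the conjugation, which the paper simply takes for granted.
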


\begin{proof}
We recall that the irreducible representations of the free wreath product $H^+_{(B,\psi)}(\G_1)$ can be indexed by the elements of the monoid $M$, i.e. by the words written using as letters the irreducible representations of $\G_1$. The monoid is endowed with the three operations of involution, concatenation and fusion introduced in Definition \ref{opermonm}. We will denote $r_x, x\in M$ the irreducible representations. 
Let $\Phi$ be the map given by $\Phi(r_x)=r_{\phi(x)}$, where for every word $x=(\alpha_1,...,\alpha_k)\in M$, we define $\phi(x):=(\phi(\alpha_1),...,\phi(\alpha_k))$. This definitions makes sense because an isomorphism of the fusion semirings like $\phi$, when restricted to $\irr(\G_1)$ is a bijection onto $\irr(\G_2)$.
We observe that the map $\Phi$ can be extended by additivity to
$$\Phi:R^+(H^+_{(B,\psi)}(\G_1))\longrightarrow R^+(H^+_{(B',\psi')}(\G_2))$$
Moreover, $\Phi$ is an isomorphism because $\phi$ is a bijection and $\irr(\G_i)$ is a basis of $R^+(\G_i),\, i=1,2$.
Then, the proof reduces to show that, for all $x,y\in M$, we have 
\begin{equation}\label{compatib}
\Phi(r_x\otimes r_y)=\Phi(r_x)\otimes \Phi(r_y)
\end{equation}
\begin{equation}\label{compatib1}
\Phi(\overline{r_x})=\overline{\Phi(r_x)}
\end{equation}
For this verification it is necessary to state a preliminary result which assures the compatibility between the map $\phi$ and the operations of the monoid $M$: more precisely, we have, for all $u,v\in M$, $\phi(u,v)=(\phi(u),\phi(v))$, $\phi(\overline{u})=\overline{\phi(u)}$ and $\phi(u.v)=\phi(u).\phi(v)$. Indeed, for $u=(\alpha_1,...,\alpha_k)\in M$ and $v=(\beta_1,...,\beta_l)\in M$ we have $\phi(u,v)=\phi((\alpha_1,...,\alpha_k,\beta_1,...,\beta_l))=(\phi(\alpha_1),...,\phi(\alpha_k),\phi(\beta_1),...,\phi(\beta_l))=(\phi(u),\phi(v))$. Similarly 
$\phi(\overline{u})=\phi((\overline{\alpha_k},...,\overline{\alpha_1}))=
(\phi(\overline{\alpha_k}),...,\phi(\overline{\alpha_1})) =
(\overline{\phi(\alpha_k)},...,\overline{\phi(\alpha_1)}) =
\overline{\phi(u)}$ and,
\begin{eqnarray*}
\phi(u.v)&=&\phi(\bigoplus_{\gamma\subset \alpha_k\otimes \beta_1}(\alpha_1,...,\alpha_{k-1},\gamma,\beta_2,...,\beta_l)) \\&=&
\bigoplus_{\phi(\gamma)\subset \phi(\alpha_k\otimes \beta_1)}(\phi(\alpha_1),...,\phi(\alpha_{k-1}),\phi(\gamma),\phi(\beta_2),...,\phi(\beta_l)) \\ &=&
\bigoplus_{\phi(\gamma)\subset \phi(\alpha_k)\otimes \phi(\beta_1)}(\phi(\alpha_1),...,\phi(\alpha_{k-1}),\phi(\gamma),\phi(\beta_2),...,\phi(\beta_l)) \\ &=&
\phi((\alpha_1,...,\alpha_k)).\phi((\beta_1,...,\beta_l)).
\end{eqnarray*}
Let us prove equation \ref{compatib}. We have
\begin{eqnarray*}
\Phi(r_x\otimes r_y) &= &\Phi(\sum_{\substack{x=u,t \\ y=\overline{t},v}}r_{u,v}\oplus \sum_{\substack{x=u,t \\ y=\overline{t},v \\  u\neq\emptyset , v\neq \emptyset}}r_{u.v})=
\sum_{\substack{x=u,t \\ y=\overline{t},v}}\Phi(r_{u,v})\oplus \sum_{\substack{x=u,t \\ y=\overline{t},v \\  u\neq\emptyset , v\neq \emptyset}}\Phi(r_{u.v}) \\ &=&
\sum_{\substack{\phi(x)=\phi(u,t) \\ \phi(y)=\phi(\overline{t},v)}}r_{\phi(u,v)}\oplus \sum_{\substack{\phi(x)=\phi(u,t) \\ \phi(y)=\phi(\overline{t},v) \\  \phi(u)\neq\emptyset , \phi(v)\neq \emptyset}}r_{\phi(u.v)} =
\sum_{\substack{\phi(x)=\phi(u),\phi(t) \\ \phi(y)=\overline{\phi(t)},\phi(v)}}r_{\phi(u),\phi(v)}\oplus \sum_{\substack{\phi(x)=\phi(u),\phi(t) \\ \phi(y)=\overline{\phi(t)},\phi(v) \\  \phi(u)\neq\emptyset , \phi(v)\neq \emptyset}}r_{\phi(u).\phi(v)} \\ &=&
r_{\phi(x)}\otimes r_{\phi(y)} =
\Phi(r_x)\otimes \Phi(r_y).
\end{eqnarray*}
The relation \ref{compatib1} is clear because  $\Phi(\overline{r_x})=\Phi(r_{\overline{x}})=r_{\phi(\overline{x})}=r_{\overline{\phi(x)}}=\overline{r_{\phi(x)}}=\overline{\Phi(r_x)}$.
\end{proof}

%%%%%%%%%%%%%%%%%%%%%%%%%%%%%%%%%%%%%%%%%%%%%%%%%%
\section{Algebraic and analytic properties}\label{proprieta}
%%%%%%%%%%%%%%%%%%%%%%%%%%%%%%%%%%%%%%%%%%%%%%%%%%

In this section we apply the monoidal equivalence result proved in Theorem \ref{teomoneq} and the free product decomposition proved in Proposition \ref{freeprodcqggen} to deduce some properties of the reduced C*-algebras and the von Neumann algebras associated to a free wreath product.

Let us start we a remark about the free product decomposition of the reduced C*-algebras and the von Neumann algebras associated to a free wreath product.

\begin{remark}\label{decomp11}
By using the free product decomposition of Proposition \ref{freeprodcqggen} and a remark of Wang \cite{wan95}, we observe that the Haar measure of $\wprhg$ is the free product of the Haar measures of its factors. Hence, the following isomorphisms hold:
$$(C_r(\wprhg),h)\cong \underset{\text{red}}{\ast}_{i=1}^k (C_r(H^+_{(B_i,\psi_i)}(\G)),h_i)$$
$$(L^\infty(\wprhg),h)\cong \ast_{i=1}^k (L^\infty(H^+_{(B_i,\psi_i)}(\G)),h_i)$$
where $h$ and $h_i$ are the Haar states on the respective C*-algebras and the second free product is the free product of von Neumann algebras.
\end{remark}

We can now prove some approximation properties. We refer to \cite{dcfy14} for the definition of the central ACPAP.

\begin{prop}\label{propacpap}
Let $B$ be a finite dimensional C*-algebra endowed with a state $\psi$. Let $\widehat{\G}$ be a discrete quantum group with the central ACPAP and consider the free wreath product $\wprhg$. Suppose that, in the free product decomposition $\hat{\ast}_{i=1}^k H^+_{(B_i,\psi_i)}(\G)$, we have $\dim(B_i)\geq 4$ for all $i$. Then, the dual of $\wprhg$ has the central ACPAP.
\end{prop}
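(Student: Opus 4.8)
The plan is to combine the free product decomposition of Proposition~\ref{freeprodcqggen}, the monoidal equivalence of Theorem~\ref{teomoneq}, and three permanence properties of the central ACPAP for discrete quantum groups established in \cite{dcfy14}: stability under free products, invariance under monoidal equivalence of the compact duals, and restriction to (divisible) quantum subgroups.

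First I would reduce to the case of a $\delta$-form. By Proposition~\ref{freeprodcqggen} we have $\wprhg\cong\hat{\ast}_{i=1}^k H^+_{(B_i,\psi_i)}(\G)$, where each $\psi_i$ is a $\delta_i$-form on $B_i$ and, by hypothesis, $\dim(B_i)\geq 4$ for all $i$. Since the central ACPAP is stable under free products of discrete quantum groups \cite{dcfy14}, it suffices to show that $\widehat{H^+_{(B_i,\psi_i)}(\G)}$ has the central ACPAP for each $i$. So from now on one may assume $\psi$ is a $\delta$-form with $\dim(B)\geq 4$; as noted in the proof of Theorem~\ref{teomoneq}, this already forces $\tpsi(1)\geq 4$, so the monoidal equivalence below is available.

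Next, since the representation category of $H^+_{(B,\psi)}(\G)$ only depends on $\mathscr{NC}_{\G}$ and hence coincides with that of $H^+_{(B,\tpsi)}(\G)$ (remark following Theorem~\ref{intertwrpr}), Theorem~\ref{teomoneq} gives $H^+_{(B,\psi)}(\G)\simeq_{mon}\HH$, where $\HH$ is the quantum subgroup of $\G\hat{\ast}SU_q(2)$ with $q+q^{-1}=\sqrt{\tpsi(1)}$. By invariance of the central ACPAP under monoidal equivalence \cite{dcfy14}, it is enough to prove that $\widehat{\HH}$ has the central ACPAP. Now $SU_q(2)$ is coamenable, so $\widehat{SU_q(2)}$ is amenable and therefore has the central ACPAP; since $\widehat{\G}$ has the central ACPAP by hypothesis, stability under free products yields that $\widehat{\G\hat{\ast}SU_q(2)}$ has the central ACPAP. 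Finally $\HH$ is a divisible quantum subgroup of $\G\hat{\ast}SU_q(2)$, and the central ACPAP restricts to such subgroups \cite{dcfy14}; hence $\widehat{\HH}$ has the central ACPAP, which concludes the proof.

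The argument is essentially an assembly of known results, so no step is conceptually hard; the part needing the most care is the bookkeeping — verifying that $\dim(B_i)\geq 4$ is genuinely used at each stage (it is what makes Theorem~\ref{teomoneq} applicable and what guarantees $\tpsi_i(1)\geq 4$), tracking the passage between the $\delta_i$-form $\psi_i$ and the associated $1$-form $\tpsi_i$, and checking that the inclusion $\widehat{\HH}\subset\widehat{\G\hat{\ast}SU_q(2)}$ is a divisible discrete quantum subgroup so that the restriction statement of \cite{dcfy14} applies as stated.
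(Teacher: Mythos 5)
Your argument is correct and follows essentially the same route as the paper: the free product decomposition of Proposition \ref{freeprodcqggen} plus stability of the central ACPAP under free products to reduce to the $\delta$-form case, then the monoidal equivalence with the quantum subgroup $\HH\subset\G\hat{\ast}SU_q(2)$ combined with the permanence properties under free products, passage to quantum subgroups, and monoidal equivalence. One small caution: the central ACPAP of $\widehat{SU_q(2)}$ is not a formal consequence of coamenability/amenability — it is the main technical result of \cite{dcfy14} (their Theorem 25) — so that step should be supported by a direct citation rather than derived from amenability.
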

\begin{proof}
From \cite[Proposition 24]{dcfy14} the central ACPAP is preserved by the operation of free product so it is enough to prove the result when $\psi$ is a $\delta$-form. In this case, the free wreath product is monoidally equivalent to a quantum subgroup of $\G\hat{\ast} SU_q(2),\, q\in(0,1]$, by Theorem \ref{teomoneq}. Now, the dual of $SU_q(2), q\in[-1,1],q\neq 0$ has the central ACPAP (\cite[Theorem 25]{dcfy14}) so the free product has the central ACPAP. This property is also preserved when passing to quantum subgroups (\cite[Lemma 23]{dcfy14}). Since the central ACPAP is preserved under monoidal equivalence the proof is complete.
\end{proof}

The Haagerup property is implied by the central ACPAP, therefore we have the following corollary.

\begin{cor}
Consider the assumptions and the notations of Proposition \ref{propacpap}. Then, the von Neumann algebra $L^\infty(\wprhg)$ has the Haagerup property and the W*-CCAP.
\end{cor}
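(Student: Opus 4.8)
The plan is to obtain both assertions as immediate consequences of Proposition \ref{propacpap} together with the implications between approximation properties recorded in \cite{dcfy14}. By Proposition \ref{propacpap}, under the stated hypotheses the discrete quantum group $\widehat{\wprhg}$ has the central ACPAP, so the whole corollary reduces to the passage from this property to properties of the von Neumann algebra.

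First I would recall from \cite{dcfy14} that the central ACPAP of a discrete quantum group $\widehat{\G}$ implies the Haagerup property of $\widehat{\G}$, and that the Haagerup property (formulated via a net of $L^2$-implemented, unital, completely positive, Haar-state-preserving central multipliers converging to the identity with the appropriate compactness condition) descends to the von Neumann algebra $L^\infty(\G)$; applying this with $\G=\wprhg$ yields that $L^\infty(\wprhg)$ has the Haagerup property. Secondly, the same net of central multipliers witnessing the ACPAP provides a net of normal unital multipliers on $L^\infty(\G)$ which are completely contractive --- by the ``completely contractive'' part of the definition --- and which converge to $\id$ in the point-weak-$*$ topology; this is precisely the W$^*$-CCAP, and taking $\G=\wprhg$ completes the proof.

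Since everything reduces to combining Proposition \ref{propacpap} with statements already available in \cite{dcfy14}, there is no genuine obstacle here. The only point deserving a brief check is that the implication ``central ACPAP $\Rightarrow$ Haagerup property and W$^*$-CCAP for the associated von Neumann algebra'' is quoted exactly in the form proved in \cite{dcfy14}; in particular it holds without any Kac-type assumption, so it applies to $\wprhg$ even when $\psi$ is not a trace, and hence no hypothesis beyond those inherited from Proposition \ref{propacpap} is required.
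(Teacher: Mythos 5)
Your proposal is correct and follows exactly the route the paper takes: Proposition \ref{propacpap} gives the central ACPAP for the dual of $\wprhg$, and the implications ``central ACPAP $\Rightarrow$ Haagerup property and W$^*$-CCAP of the associated von Neumann algebra'' are quoted from \cite{dcfy14}. The paper states this in one line; your additional remarks on multipliers and the absence of a Kac-type assumption are consistent with it.
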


\begin{example}
From \cite{dcfy14}, we know that the dual of $SU_q(2)$ with $q\in[-1,1],q\neq 0$ as well as the duals of the free unitary and orthogonal quantum groups $U^+(F)$ and $O^+(F)$ with $\dim(F)\geq 2$, have the central $ACPAP$. It follows that, for any finite dimensional C*-algebra $B$, $\dim(B)\geq 4$ endowed with a $\delta$-form $\psi$, the von Neumann algebras $L^{\infty}(H_{(B,\psi)}^+(U^+(F)))$, $L^{\infty}(H_{(B,\psi)}^+(O^+(F)))$ and $L^{\infty}(H_{(B,\psi)}^+(SU_q(2)))$ have the Haagerup property.
\end{example}

Recall that a discrete quantum group $\widehat{\G}$ is said to be exact if its reduced C*-algebra $C_r(\G)$ is exact.

\begin{prop}\label{exactness}
Let $B$ be a finite dimensional C*-algebra endowed with a state $\psi$. Let $\widehat{\G}$ be an exact discrete quantum group and consider the reduced C*-algebra $C_r(\wprhg)$ with free product decomposition $\underset{\text{red}}{\ast}_{i=1}^k C_r(H^+_{(B_i,\psi_i)}(\G))$. If $\dim(B_i)\geq 4$ for all $i$, then the dual of $\wprhg$ is exact.
\end{prop}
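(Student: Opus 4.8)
The plan is to reduce the general statement to the $\delta$-form case via the free product decomposition of Proposition \ref{freeprodcqggen}, and then to run the monoidal equivalence argument of Theorem \ref{teomoneq} exactly as in the proof of Proposition \ref{propacpap}. First I would recall that exactness of a discrete quantum group is, by definition, exactness of the reduced C*-algebra $C_r(\G)$; and that by the free product decomposition of Proposition \ref{freeprodcqggen} together with Remark \ref{decomp11} we have $C_r(\wprhg)\cong \underset{\text{red}}{\ast}_{i=1}^k C_r(H^+_{(B_i,\psi_i)}(\G))$, a reduced free product with respect to the Haar states. Since a reduced free product of exact C*-algebras (with respect to GNS-faithful states) is exact, it suffices to prove the statement for each factor, i.e. we may assume $\psi$ is a $\delta$-form and $\dim(B)\geq 4$.

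In the $\delta$-form case, Theorem \ref{teomoneq} gives a monoidal equivalence $\wprhg\simeq_{mon}\HH$, where $\HH$ is the compact quantum subgroup of $\G\,\widehat{\ast}\,SU_q(2)$ generated by the elements $b_{ij}ab_{kl}$, for some $q\in(0,1]$. The next step is to quote a result of Vaes and Vergnioux \cite{vv07}: if $\widehat{\G}_1$ and $\widehat{\G}_2$ are monoidally equivalent discrete quantum groups, then $\widehat{\G}_1$ is exact if and only if $\widehat{\G}_2$ is exact (exactness is a monoidal equivalence invariant). Hence it is enough to show that $\widehat{\HH}$ is exact. For this, I would use two standard permanence properties: the dual of $SU_q(2)$ is exact (its reduced C*-algebra is nuclear, hence exact — indeed $SU_q(2)$ is coamenable), and exactness of discrete quantum groups is stable under free products (the reduced free product of exact C*-algebras is exact, by Dykema, and this passes to the quantum group level); therefore $\widehat{\G}\,\widehat{\ast}\,\widehat{SU_q(2)}$ is exact whenever $\widehat{\G}$ is. Finally, exactness passes to discrete quantum subgroups: $\widehat{\HH}$ is a discrete quantum subgroup of $\widehat{\G\,\widehat{\ast}\,SU_q(2)}$, and a quantum subgroup of an exact discrete quantum group is exact (the reduced C*-algebra of the subgroup embeds as a C*-subalgebra, or one invokes the analogue of \cite[Lemma 23]{dcfy14} for exactness). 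Combining, $\widehat{\HH}$ is exact, hence so is the dual of $\wprhg$ in the $\delta$-form case, and the general case follows from the free product reduction above.

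The only genuinely delicate point is making sure each invoked permanence statement is available in the quantum group setting with the correct hypotheses: exactness as a monoidal equivalence invariant (this is precisely the content of the cited result of Vaes–Vergnioux, so no obstacle there); stability of exactness under reduced free products (here one must use that the Haar states are GNS-faithful, which holds); and stability under passing to discrete quantum subgroups. Since all three are by now standard and the last two are the obvious analogues of the lemmas used for the ACPAP in \cite{dcfy14}, there is no real obstacle — the proof is essentially a transcription of the proof of Proposition \ref{propacpap} with ``central ACPAP'' replaced by ``exactness'' and the reference \cite{dcfy14} replaced by \cite{vv07} for the monoidal invariance step. I would therefore keep the write-up short, emphasizing the reduction to the $\delta$-form case and the chain monoidal equivalence $\to$ free product $\to$ quantum subgroup.

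\begin{proof}
Since exactness of a discrete quantum group is stable under reduced free products (with respect to GNS-faithful states) and, by Proposition \ref{freeprodcqggen} and Remark \ref{decomp11}, $C_r(\wprhg)\cong \underset{\text{red}}{\ast}_{i=1}^k C_r(H^+_{(B_i,\psi_i)}(\G))$, it is enough to prove the result when $\psi$ is a $\delta$-form, in which case $\dim(B)\geq 4$ by hypothesis. By Theorem \ref{teomoneq}, $\wprhg$ is then monoidally equivalent to the compact quantum subgroup $\HH$ of $\G\,\widehat{\ast}\,SU_q(2)$ for some $q\in(0,1]$. The dual of $SU_q(2)$ is exact, and exactness is preserved under free products of discrete quantum groups, so $\widehat{\G}\,\widehat{\ast}\,\widehat{SU_q(2)}$ is exact. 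Exactness passes to discrete quantum subgroups, hence $\widehat{\HH}$ is exact. Finally, by \cite{vv07} exactness is invariant under monoidal equivalence, so the dual of $\wprhg$ is exact. This proves the $\delta$-form case, and the general case follows from the free product reduction above.
\end{proof}
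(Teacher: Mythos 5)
Your proof is correct and follows essentially the same route as the paper's: reduce to the $\delta$-form case via the reduced free product decomposition and Dykema's permanence result, invoke the monoidal equivalence with $\HH\subset\G\,\widehat{\ast}\,SU_q(2)$ from Theorem \ref{teomoneq} together with the Vaes--Vergnioux invariance of exactness, and conclude from exactness of $\widehat{SU_q(2)}$ (amenability), stability under free products, and passage to subalgebras/quantum subgroups. The only difference is cosmetic: the paper phrases the last step directly as $C_r(\HH)$ being a C*-subalgebra of an exact reduced free product, which is exactly what your quantum-subgroup formulation unwinds to.
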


\begin{proof}
Since exactness is preserved by reduced free products \cite{dyk04}, it is enough to show the result for the factors $\G\wr_{*}\G^{aut}(B_i,\psi_i)$. Since exactness is conserved under monoidal equivalence \cite{vv07}, it suffices to prove the exactness of $C_r(\HH)$. This is a subalgebra of a free product whose factors are exact: $\widehat{\G}$ is exact by hypothesis and $\widehat{SU_q(2)}$ is exact as a consequence of its amenability. Exactness passes to subalgebras and free products so $C_r(\HH)$ is exact.
\end{proof}

\begin{example}
Let $B$ a finite dimensional C*-algebra, $\dim(B)\geq 4$, endowed with a $\delta$-form $\psi$. Then, the dual of $H^+_{(B,\psi)}(G)$ is exact for any classical compact group $G$. Moreover, $SU_q(2)$, $q\in (-1,1)$, $q\neq 0$ is exact, so the dual of $H^+_{(B,\psi)}(SU_q(2))$ is exact. In \cite{vv07} it is proved that also $O^+(F)$, for $\dim(F)\geq 3$ is exact; the exactness of the dual of $H_{(B,\psi)}^+(O^+(F))$ follows.
\end{example}

We now study the simplicity of the reduced C*-algebra and the uniqueness of the trace.

\begin{prop}
Let $B$ be a finite dimensional C*-algebra endowed with a trace $\psi$. Let $\G$ be a compact quantum group of Kac type. Consider the reduced C*-algebra $C_r(\wprhg)$ and its free product decomposition $\underset{\text{red}}{\ast}_{i=1}^k C_r(H^+_{(B_i,\psi_i)}(\G))$. If there is either only one factor (i.e. $\psi$ is a $\delta$-trace) and $\dim(B)\geq 8$ or there are two or more factors with $\dim(B_i)\geq 4$ for all $i$, then $C_r(\wprhg)$ is simple with unique trace.
\end{prop}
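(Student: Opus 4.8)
First I would invoke Proposition~\ref{freeprodcqggen} together with Remark~\ref{decomp11} to identify $(C_r(\wprhg),h)$ with the reduced free product $\underset{\text{red}}{\ast}_{i=1}^{k}\big(C_r(H^+_{(B_i,\psi_i)}(\G)),h_i\big)$, where each $\psi_i$ is a $\delta_i$-form and, being a positive scalar multiple of the restriction of $\psi$, is a trace. Since $\G$ is of Kac type, Proposition~\ref{deffwp} shows that every $H^+_{(B_i,\psi_i)}(\G)$, and $\wprhg$ itself, is of Kac type; in particular $h$ and all the $h_i$ are faithful traces on the respective reduced C*-algebras. It then remains to treat this reduced free product, in the two cases of the statement.

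\textbf{Two or more factors ($\dim B_i\ge 4$).} Here I would apply Avitzour's criterion for reduced free products of C*-algebras equipped with faithful tracial states. Grouping the factors as $A_1:=C_r(H^+_{(B_1,\psi_1)}(\G))$ and $A_2:=\underset{\text{red}}{\ast}_{i\ge 2}C_r(H^+_{(B_i,\psi_i)}(\G))$, it suffices to produce a unitary $a\in A_1$ and unitaries $b,c\in A_2$ with $h(a)=h(b)=h(c)=h(b^*c)=0$. By Remark~\ref{cara1}, inside each factor the self-adjoint character $\chi(a(1_\G))$ has a free Poisson law of parameter $1$, whose spectral measure is diffuse; hence the commutative C*-subalgebra it generates, with the restriction of $h_i$, contains a Haar unitary $u_i$ (so $h_i(u_i^n)=0$ for all $n\ne 0$). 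Taking $a=u_1$, $b=u_2$, $c=u_2^2$ meets all four conditions, and Avitzour's theorem gives that $C_r(\wprhg)$ is simple with unique trace $h$.

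\textbf{One factor, $\psi$ a $\delta$-trace, $\dim B\ge 8$.} Here I would adapt the Powers averaging method of Banica~\cite{ban97}, as used by Lemeux~\cite{lem14} and Wahl~\cite{wah14}. The starting point is that the C*-subalgebra $\mathcal A_0\subset C_r(\wprhg)$ generated by the coefficients of $a(1_\G)$ is isomorphic to $C_r(\aut)$, compatibly with the Haar states: by Definition~\ref{deffwpg} the unitary $a(1_\G)$ satisfies the defining relations of the fundamental representation of $\aut$, and by Theorem~\ref{intertwrpr} one has $\dim\Hom(a(1_\G)^{\ot k},a(1_\G)^{\ot l})=\#NC(k,l)$, matching Theorem~\ref{intertautb}, so the corresponding intertwiner categories coincide. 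For a $\delta$-trace a direct computation gives $\delta=\dim(B)=:n\ge 8$, whence $C_r(\aut)$ is simple with unique trace by Brannan~\cite{bra13}. It then remains to run the relative Powers argument: organising $\pol(\wprhg)$ along the word decomposition given by the fusion rules of Theorem~\ref{ThmFusion} and using the rescaled matrix coefficients of the representations $a(\alpha)$, $\alpha\in\irr(\G)$, as the analogue of the group unitaries used in \cite{lem14}, one shows that averaging over suitable unitaries spreads any element out, so that every tracial state agrees with $h$ on $\mathcal A_0$ and hence coincides with $h$ everywhere, and every nonzero closed two-sided ideal intersects $\mathcal A_0$ trivially and is therefore all of $C_r(\wprhg)$.

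\textbf{Main obstacle.} The bulk of the work is the relative Powers estimate in the last case: setting up the averaging over the appropriate unitaries built from the $a(\alpha)$ and controlling the norm decay, using the fusion rules of Theorem~\ref{ThmFusion}, the Kac assumption on $\G$, and the simplicity of $C_r(\aut)$. Everything else is either a direct application of results already available (Proposition~\ref{freeprodcqggen}, Remark~\ref{decomp11}, Remark~\ref{cara1}, \cite{bra13}, Avitzour's criterion) or routine bookkeeping.
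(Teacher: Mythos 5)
Your proposal follows essentially the same route as the paper's own proof: reduce via the free product decomposition of Proposition \ref{freeprodcqggen} and Remark \ref{decomp11}, apply Avitzour's criterion (using a Haar unitary extracted from the diffuse free Poisson distribution of $\chi(a(1_{\G}))$) when there are at least two factors, and run the Powers averaging method of Lemeux, with Wahl's refinement, in the single-factor case where Brannan's result gives simplicity of $C_r(\aut)$ for the $\delta$-trace with $\delta=\dim(B)\geq 8$. One small slip in your sketch of the last step: a nonzero closed ideal must intersect $\mathcal{A}_0$ \emph{non}-trivially for the conclusion to follow.
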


\begin{proof}
The techniques and the results used in \cite{pit14}[Proposition 3.2] for the proof of the analogous result in the case of the dual of a discrete group can be applied also here. More precisely, if $k\geq 2$, the proof is based on a proposition of Avitzour (see \cite[Section 3]{avi82}) stating that all the free product satisfying some basic conditions are simple with unique trace. In the other case, when the decomposition has only one factor, we can generalise the proof presented by Lemeux in \cite[Theorem 3.5]{lem14}. The assumption of Lemeux on the minimum number of irreducible representations of $\G$ can be removed by making use of a trick introduced by Wahl in \cite{wah14}.
\end{proof}

%%%%%%%%%%%%%%%%%%%%%%%%%%%%%%%%%%%%%%%%
\section{The free wreath product $\G^{aut}(B',\psi')\wr_*\autd$}
%%%%%%%%%%%%%%%%%%%%%%%%%%%%%%%%%%%%

In this section we prove our free wreath product formula. First, we obtain some general results which will be fundamental for the proof of such a formula. Let us introduce some notations.

Let $B$ be a finite dimensional C*-algebra endowed with a $\delta$-form $\psi$. Let $m$ and  $\eta$ be the multiplication and unity on $B$ respectively. Consider the free wreath product $\wprhg$ of a compact quantum group $\G$ by the quantum automorphism group $\aut$. Choose a complete set of irreducible representations $u_\alpha\in\mathcal{L}(H_\alpha)\ot C(\Gq)$, $\alpha\in\irr(\Gq)$. We recall that $C(\wprhg)$ is generated by the coefficients of a family of unitary representations $a(\alpha), \alpha\in\irr(\G)$ such that $\eta\in\Hom(1,a(1))$ and, for all $\alpha,\beta,\gamma\in\irr(\Gq)$ and all $S\in\Hom(u_\alpha\ot u_\beta,u_\gamma)$, we have a morphism $a(S):=\widetilde{m\ot S}=(m\ot S)\circ\Sigma_{23}\in\Hom(a(\alpha)\ot a(\beta),a(\gamma))$.

For any finite dimensional representation $u\in\mathcal{L}(H_u)\ot C(\Gq)$ we define an element $a(u)\in\mathcal{L}(B\ot H_u)\ot C(\wprhg)$ in the following way. For all $\alpha\in\irr(\Gq)$ such that $\alpha\subset u$, we choose a family of isometries $S_{\alpha,k}\in\mathcal{L}(H_\alpha,H_u)$ such that $S_{\alpha,k}\in\Hom(u_\alpha,u)$, $1\leq k\leq \dim(\Hom(u_\alpha,u))$ and $S_{\alpha,k}S_{\alpha,k}^*$ are pairwise orthogonal projections with $\sum_{\alpha,k}S_{\alpha,k}S_{\alpha,k}^*=\id_{H_u}$. Hence, $u=\sum_{\alpha,k}(S_{\alpha,k}\ot 1)u_\alpha(S_{\alpha,k}^*\ot 1)$. Define
$$a(u)=\sum_{\alpha,k}(\id_B\ot S_{\alpha,k}\ot 1)a(\alpha)(\id_B\ot S^*_{\alpha,k}\ot 1)\in\mathcal{L}(B\ot H_u)\ot C(\wprhg)$$
Observe that the definition is coherent with the original $a(\alpha)$. For every finite dimensional unitary representations of $u,v$ and $w$ of $\Gq$ and every morphism $S\in\Hom(u\ot v,w)$ define the linear map $a(S):B\ot H_u\ot B\ot H_v\rightarrow B\ot H_w$ by $a(S)=(m_B\ot S)\Sigma_{23}$. Observe that $a(S)$ is coherent with the original definition, when $u,v$ and $w$ are in $\irr(\Gq)$.

\begin{prop}\label{PropFunctor}
For all finite dimensional unitary representation $u,v$ of $\Gq$ and all $S\in\Hom(u,v)$ the following holds.
\begin{enumerate}
\item The definition of $a(u)$ does not depend on the choice of the family of isometries $S_{\alpha,k}$.
\item $a(u)$ is a unitary representation of $\wprhg$ on $B\ot H_u$.
\item $\id_B\ot S\in\Hom(a(u),a(v))$.
\item If $u\simeq v$ then $a(u)\simeq a(v)$.
\item For all finite dimensional unitary representations $w$ of $\G$ and all $S\in\Hom(u\ot v,w)$, $a(S)\in\Hom(a(u)\ot a(v),a(w))$. Moreover, 
$a(S)a(S)^*=\delta\id_B\ot SS^*$. In particular, if $S^*$ is isometric then $\delta^{-\frac{1}{2}}a(S)^*$ is isometric.
\end{enumerate}
\end{prop}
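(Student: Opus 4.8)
The strategy is to reduce everything to the defining relations of $\wprhg$ and the basic intertwiners $a(\alpha)$, exploiting the categorical compatibility between the $a(\alpha)$ and the morphism maps $a(S)=\widetilde{m\ot S}$. I would treat the five assertions in an order that lets each one feed the next: first (3) at the level of irreducibles (or rather its core computation), then (1), then (2), then (4), and finally (5).

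\emph{Step 1 (the key computation for (3) and (5)).} Let $u,v,w\in\irr(\Gq)$ and $S\in\Hom(u_u\ot u_v,u_w)$; the relation $a(S)\in\Hom(a(u)\ot a(v),a(w))$ is exactly the second defining relation of Definition \ref{deffwpg}, so nothing to prove in the irreducible case. For general finite-dimensional $u,v,w$, write $u=\sum_{\alpha,k}(S_{\alpha,k}\ot 1)u_\alpha(S_{\alpha,k}^*\ot 1)$ and similarly for $v,w$, decompose $S=\sum S_{\gamma,n}S_{\gamma,n}^*\,S\,(S_{\alpha,k}\ot S_{\beta,l})(S_{\alpha,k}^*\ot S_{\beta,l}^*)$ so that each piece $S_{\gamma,n}^*S(S_{\alpha,k}\ot S_{\beta,l})$ lies in $\Hom(u_\alpha\ot u_\beta,u_\gamma)$, and expand $a(S)(a(u)\ot a(v))$ using bilinearity of $\widetilde{m\ot \cdot}$ in the morphism slot together with the fact (immediate from $a(S)=(m\ot S)\Sigma_{23}$) that $(\id_B\ot S_1\ot\id_B\ot S_2)$-type conjugations pass through $\Sigma_{23}$ and $m$ cleanly. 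Each summand is then handled by the irreducible-case relation, and reassembling gives $a(S)(a(u)\ot a(v))=a(w)(a(S)\ot 1)$. The identity $a(S)a(S)^* = \delta\,\id_B\ot SS^*$ is a direct computation: $a(S)a(S)^* = (m\ot S)\Sigma_{23}\Sigma_{23}^*(m^*\ot S^*) = (mm^*)\ot(SS^*) = \delta\,\id_B\ot SS^*$, using that $\psi$ is a $\delta$-form so $mm^*=\delta\,\id_B$. The ``in particular'' is then immediate: if $S^*$ is an isometry, $SS^*$ is a projection, so $(\delta^{-1/2}a(S))(\delta^{-1/2}a(S))^* = \delta^{-1}\delta\,\id_B\ot SS^* = \id_B\ot SS^*$ is a projection, i.e. $\delta^{-1/2}a(S)^*$ is an isometry.

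\emph{Step 2 ((1), (2), (4)).} For independence of the choice of the $S_{\alpha,k}$, note that any two such families differ by a block-diagonal unitary $W=\sum_{\alpha}W_\alpha$ with $W_\alpha\in\Hom(u_\alpha^{\oplus m_\alpha},u_\alpha^{\oplus m_\alpha})$ unitary (permuting/mixing the multiplicity copies), and by Definition \ref{deffwpg} such $W$ — being built from identity morphisms tensored appropriately — intertwines $\bigoplus_k a(\alpha)$ with itself, so conjugating the definition by $\id_B\ot W$ leaves $a(u)$ unchanged; more simply, $a(u)$ is characterized intrinsically by $(\id_B\ot S_{\alpha,k})\in\Hom(a(\alpha),a(u))$ for all $\alpha,k$ with ranges spanning $\id_{H_u}$, which pins it down. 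Unitarity of $a(u)$ in (2) follows since the $(\id_B\ot S_{\alpha,k}\ot 1)$ have pairwise orthogonal ranges summing to $\id$ and each $a(\alpha)$ is unitary (same computation as (1) in the proof of Proposition \ref{freeprodcqggen}); the representation property $(\id\ot\Delta)(a(u))=a(u)_{(12)}a(u)_{(13)}$ follows from the corresponding property of each $a(\alpha)$ and the fact that the $\id_B\ot S_{\alpha,k}\ot 1$ commute with $\id\ot\Delta$. For (3) in general, $\id_B\ot S$ with $S\in\Hom(u,v)$: decompose $S$ along isometries into morphisms $u_\alpha\to u_\alpha$, each a scalar, and use that $\id_B\ot(\text{scalar})$ trivially intertwines $a(\alpha)$ with itself; reassembling gives $\id_B\ot S\in\Hom(a(u),a(v))$. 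Finally (4): if $u\simeq v$ via a unitary $S\in\Hom(u,v)$, then $\id_B\ot S$ is unitary and, by (3), lies in $\Hom(a(u),a(v))$, so $a(u)\simeq a(v)$.

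\emph{Main obstacle.} The only genuinely delicate point is the general (non-irreducible) case of assertion (5) — checking that the decomposition of $S\in\Hom(u\ot v,w)$ into irreducible pieces is compatible with the bookkeeping of the $\Sigma_{23}$-leg-swaps and the isometry conjugations in the definitions of $a(u)$, $a(v)$, $a(w)$, and $a(S)$. Everything else is either a defining relation or a routine orthogonality-of-projections argument essentially identical to computations already carried out in the proof of Proposition \ref{freeprodcqggen}. I would write Step 1 carefully and treat the remaining items briskly.
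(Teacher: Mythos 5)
Your proposal is correct and follows essentially the same route as the paper: reduce everything to the irreducible case by inserting the resolutions of the identity $\sum_{\alpha,k}S_{\alpha,k}S_{\alpha,k}^*$ (so that each piece of $S$ lands in some $\Hom(u_\alpha\ot u_\beta,u_\gamma)$ and the defining relations of the free wreath product apply), and compute $a(S)a(S)^*=(m_Bm_B^*)\ot SS^*=\delta\,\id_B\ot SS^*$ directly. One small correction to your last step: if $S^*$ is an isometry then $SS^*=\id$, not merely a projection (having $TT^*$ equal to a projection would only make $T^*$ a partial isometry), and it is this identity that makes $\delta^{-\frac{1}{2}}a(S)^*$ isometric.
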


\begin{proof}
$(1).$ Let $T_{\beta,l}\in\mathcal{L}(H_\beta,H_u)$ be another family of isometries such that $T_{\beta,l}\in\Hom(u_\beta,u)$, $1\leq l\leq \dim(\Hom(u_\beta,u))$ and $T_{\beta,l}T_{\beta,l}^*$ are pairwise orthogonal projections with $\sum_{\beta,l}T_{\beta,l}T_{\beta,l}^*=\id_{H_u}$. Observe that $T_{\beta,l}^*S_{\alpha,k}\in\Hom(u_\alpha,u_\beta)$. Therefore, there exists $\lambda^\beta_{kl}\in\C$ such that $T_{\beta,l}^*S_{\alpha,k}=\delta_{\alpha,\beta}\lambda^\beta_{kl}\id_{H_{u_\beta}}$. Also note that $\sum_{k}\lambda^\beta_{kl}S_{\beta,k}^*=\sum_{k}T_{\beta,l}^*S_{\beta,k}S_{\beta,k}^*=\sum_{\alpha,k}T_{\beta,l}^*S_{\alpha,k}S_{\alpha,k}^*$ since $T_{\beta,l}^*S_{\alpha,k}=0$ for $\alpha\neq\beta$. Hence, $\sum_{k}\lambda^\beta_{kl}S_{\beta,k}^*=T_{\beta,l}^*\left(\sum_{\alpha,k}S_{\alpha,k}S_{\alpha,k}^*\right)=T_{\beta,l}^*$. It follows that
\begin{eqnarray*}
&&\sum_{\alpha,k}(\id_B\ot S_{\alpha,k}\ot 1)a(\alpha)(\id_B\ot S^*_{\alpha,k}\ot 1)=\sum_{\alpha,\beta,k,l}(\id_B\ot T_{\beta,l}T_{\beta,l}^*S_{\alpha,k}\ot 1)a(\alpha)(\id_B\ot S^*_{\alpha,k}\ot 1)\\
&=&\sum_{\alpha,\beta,k,l}(\id_B\ot T_{\beta,l}\delta_{\alpha,\beta}\lambda^\beta_{kl}\ot 1)a(\alpha)(\id_B\ot S^*_{\alpha,k}\ot 1)=\sum_{\beta,k,l}(\id_B\ot T_{\beta,l}\ot 1)a(\beta)(\id_B\ot \lambda^\beta_{kl}S^*_{\beta,k}\ot 1)\\
&=&\sum_{\beta,l}(\id_B\ot T_{\beta,l}\ot 1)a(\beta)(\id_B\ot\left(\sum_k\lambda^\beta_{kl}S^*_{\beta,k}\right)\ot 1)=\sum_{\beta,l}(\id_B\ot T_{\beta,l}\ot 1)a(\beta)(\id_B\ot T^*_{\beta,l}\ot 1).
\end{eqnarray*}

$(2)$ is obvious and $(4)$ follows from $(3)$. Let us prove $(3)$. Write $T_{\beta,l}\in\mathcal{L}(H_\beta,H_v)$ the chosen isometries such that $T_{\beta,l}\in\Hom(u_\beta,v)$, $1\leq l\leq \dim(\Hom(u_\beta,v))$ and $T_{\beta,l}T_{\beta,l}^*$ are pairwise orthogonal projections with $\sum_{\beta,l}T_{\beta,l}T_{\beta,l}^*=\id_{H_v}$. Observe that $T_{\beta,l}^*SS_{\alpha,k}\in\Hom(u_\alpha,u_\beta)$. Hence there exists $\lambda^\beta_{kl}\in\C$ such that 
$T_{\beta,l}^*SS_{\alpha,k}=\delta_{\alpha,\beta}\lambda^\beta_{kl}$. Also note that $\sum_{k}\lambda^\beta_{kl}S_{\beta,k}^*=\sum_{k}T_{\beta,l}^*SS_{\beta,k}S_{\beta,k}^*=\sum_{\alpha,k}T_{\beta,l}^*SS_{\alpha,k}S_{\alpha,k}^*$ since $T_{\beta,l}^*SS_{\alpha,k}=0$ for $\alpha\neq\beta$. Hence, $\sum_{k}\lambda^\beta_{kl}S_{\beta,k}^*=T_{\beta,l}^*S\left(\sum_{\alpha,k}S_{\alpha,k}S_{\alpha,k}^*\right)=T_{\beta,l}^*S$. It follows that
\begin{eqnarray*}
(\id_B\ot S\ot 1)a(u)
&=&\sum_{\alpha,k}(1\ot SS_{\alpha,k}\ot 1)a(\alpha)(1\ot S^*_{\alpha,k}\ot 1)\\&=&\sum_{\alpha,\beta,k,l}(1\ot T_{\beta,l}T_{\beta,l}^*SS_{\alpha,k}\ot 1)a(\alpha)(1\ot S^*_{\alpha,k}\ot 1)\\
&=&\sum_{\alpha,\beta,k,l}(1\ot T_{\beta,l}\delta_{\alpha,\beta}\lambda^\beta_{kl}\ot 1)a(\alpha)(1\ot S^*_{\alpha,k}\ot 1)\\
&=&\sum_{\beta,k,l}(1\ot T_{\beta,l}\ot 1)a(\beta)(1\ot \lambda^\beta_{kl}S^*_{\beta,k}\ot 1)\\
&=&\sum_{\beta,l}(1\ot T_{\beta,l}\ot 1)a(\beta)(1\ot\left(\sum_k\lambda^\beta_{kl}S^*_{\beta,k}\right)\ot 1)\\
&=&a(v)(\id_B\ot S\ot 1).
\end{eqnarray*}

$(5)$. Consider the decompositions $u=\sum_{\alpha,k}(U_{\alpha,k}\ot 1)u_\alpha(U^*_{\alpha,k}\ot 1)$, $v=\sum_{\beta,l}(V_{\beta,l}\ot 1)u_\beta(V^*_{\beta,l}\ot 1)$ and $w=\sum_{\gamma,j}(W_{\gamma,j}\ot 1)u_\gamma(W^*_{\gamma,j}\ot 1)$. Then, with $A=C(\wprhg)$, $a(u)_{13}a(v)_{23}$ is equal to:
$$\sum_{\alpha,\beta,k,l}(\id_B\ot U_{\alpha,k}\ot\id_{B\ot H_v}\ot 1_A)a(\alpha)_{13}(\id_B\ot U_{\alpha,k}^*\ot\id_B\ot V_{\beta,l}\ot 1_A) a(\beta)_{23}(\id_{B\ot H_u}\ot\id_B\ot V_{\beta,l}^*\ot 1_A)
.$$
We have $a(S)(\id_B\ot U_{\alpha,k}\ot\id_{B\ot H_v})=m_B\ot\left( S\circ(U_{\alpha,k}\ot\id_{H_v})\right)\circ\Sigma_{23}$ and, by using $\id_{H_v}=\sum V_{\beta,l} V_{\beta,l}^*$ and $\id_{H_w}=\sum W_{\gamma,j} W_{\gamma,j}^*$, we find $$ S\circ(U_{\alpha,k}\ot\id_{H_v})=\sum W_{\gamma,j}\left(W^*_{\gamma,j}\circ S\circ(U_{\alpha,k}\ot V_{\beta,l})\right)V_{\beta,l}^*.$$ Hence $a(S)(\id_B\ot U_{\alpha,k}\ot\id_{B\ot H_v})$ is equal to:
\begin{eqnarray*}
&&\sum_{\beta,\gamma,l,j}\left[m_B\ot\left( W_{\gamma,j}\left(W^*_{\gamma,j}\circ S\circ(U_{\alpha,k}\ot V_{\beta,l})\right)V_{\beta,l}^*\right) \right]\circ\Sigma_{23}\\
&=&\sum(\id_B\ot W_{\gamma,j})\left[m_B\ot\left(W_{\gamma,j}^*\circ S\circ U_{\alpha,k}\ot V_{\beta,l}\right)\right](\id_B\ot \id_B\ot\id_{H_\alpha}\ot V_{\beta,l}^*)\circ\Sigma_{23}\\
&=&\sum(\id_B\ot W_{\gamma,j})\left[m_B\ot\left(W_{\gamma,j}^*\circ S\circ U_{\alpha,k}\ot V_{\beta,l}\right)\right]\Sigma_{23}(\id_B\ot\id_{H_\alpha}\ot \id_B\ot V_{\beta,l}^*)\\
&=&\sum_{\beta,\gamma,l,j}(\id_B\ot W_{\gamma,j})a(W_{\gamma,j}^*\circ S\circ U_{\alpha,k}\ot V_{\beta,l})(\id_B\ot\id_{H_\alpha}\ot \id_B\ot V_{\beta,l}^*),
\end{eqnarray*}
where $W_{\gamma,j}^*\circ S\circ U_{\alpha,k}\ot V_{\beta,l}\in\Hom(u_\alpha\ot u_\beta,u_\gamma)$. Hence, $a(W_{\gamma,j}^*\circ S\circ U_{\alpha,k}\ot V_{\beta,l})\in\Hom(a(\alpha)\ot a(\beta), a(\gamma))$ and, by using $(3)$, we find $T_{\alpha,\beta,\gamma,k,l,j}=(\id_B\ot W_{\gamma,j})a(W_{\gamma,j}^*\circ S\circ U_{\alpha,k}\ot V_{\beta,l})\in\Hom(a(\alpha)\ot a(\beta), a(w))$. Hence, we find that $(a(S)\ot 1_A)(a(u)_{13}a(v)_{23})$ is equal to:
$$\begin{array}{l}
\sum_{\alpha,\beta,\beta',\gamma,k,l,l',j}T_{\alpha,\beta',\gamma,k,l',j}(\id_B\ot\id_{H_\alpha}\ot \id_B\ot V_{\beta',l'}^*)\ot 1_Aa(\alpha)_{13} \\ \hspace{5.3cm} (\id_B\ot U_{\alpha,k}^*\ot\id_B\ot V_{\beta,l}\ot 1_A) a(\beta)_{23}(\id_{B\ot H_u}\ot\id_B\ot V_{\beta,l}^*\ot 1_A)\end{array}$$
Since $(\id_B\ot\id_{H_\alpha}\ot \id_B\ot V_{\beta',l'}^*\ot 1_A)a(\alpha)_{13}(\id_B\ot U_{\alpha,k}^*\ot\id_B\ot V_{\beta,l}\ot 1_A)$ is equal to:
$$a(\alpha)_{13}(\id_B\ot U_{\alpha,k}^*\ot\id_B\ot V_{\beta',l'}^*V_{\beta,l}\ot 1_A)=\delta_{\beta,\beta'}\delta_{l,l'}a(\alpha)_{13}(\id_B\ot U_{\alpha,k}^*\ot\id_B\ot \id_{H_\beta}\ot 1_A),$$
we find that $(a(S)\ot 1_A)(a(u)_{13}a(v)_{23})$ is equal to:
\begin{eqnarray*}
&&\sum_{\alpha,\beta,\gamma,k,l,,j}(T_{\alpha,\beta,\gamma,k,l,j}\ot 1_A)a(\alpha)_{13}(\id_B\ot U_{\alpha,k}^*\ot\id_{B\ot H_\beta}\ot 1_A)a(\beta)_{23}(\id_{B\ot H_u}\ot\id_B\ot V_{\beta,l}^*\ot 1_A)\\
&=&\sum_{\alpha,\beta,\gamma,k,l,,j}(T_{\alpha,\beta,\gamma,k,l,j}\ot 1_A)a(\alpha)_{13}a(\beta)_{23}(\id_B\ot U_{\alpha,k}^*\ot\id_B\ot V_{\beta,l}^*\ot 1_A)\\
&=&\sum_{\alpha,\beta,\gamma,k,l,,j}a(w)(T_{\alpha,\beta,\gamma,k,l,j}\ot 1_A)(\id_B\ot U_{\alpha,k}^*\ot\id_B\ot V_{\beta,l}^*\ot 1_A).
\end{eqnarray*}
Hence, it suffices to check that $a(S)=\sum_{\alpha,\beta,\gamma,k,l,,j}T_{\alpha,\beta,\gamma,k,l,j}\circ(\id_B\ot U_{\alpha,k}^*\ot\id_B\ot V_{\beta,l}^*)$. This follows from the equation $a(S)(\id_B\ot U_{\alpha,k}\ot\id_{B\ot H_v})=\sum_{\beta,\gamma,l,j}T_{\alpha,\beta,\gamma,k,l,j}(\id_B\ot\id_{H_\alpha}\ot \id_B\ot V_{\beta,l}^*)$ for all $\alpha,k$ and the fact that $\id_{H_\alpha}=\sum_{\alpha,k} U_{\alpha,k} U_{\alpha,k}^*$. Finally, we have
$$a(S)a(S)^*=(m_B\ot S)\Sigma_{23}\Sigma_{23}^*(m_B^*\ot S^*)=(m_Bm_B^*\ot SS^*)=\delta\id_{B}\ot SS^*$$
\end{proof}

\begin{remark}\label{RmkIsometries}
If we apply assertion $(5)$ of Proposition \ref{PropFunctor} with $w=u\ot v$ and $S=\id_{H_u\ot H_v}$, we get a morphism $S_{u,v}\in\Hom(a(u)\ot a(v),a(u\ot v))$.
Hence, $T_{u,v}=\delta^{-\frac{1}{2}}S_{u,v}^*\in\Hom(a(u\ot v),a(u)\ot a(v))$ is isometric so we always have $a(u\ot v)\subset a(u)\ot a(v)$.
\end{remark}

\begin{theorem}\label{teoisom}
Let $B,B'$ be two finite dimensional C*-algebras, $\dim B,B'\geq 4$, endowed with a $\delta$-form $\psi$ and a $\delta'$-form $\psi'$ respectively. Let $(B,\psi,d)$ be a finite quantum graph. There is a unital $*$-isomorphism of C*-algebras:
$$C(\G^{aut}(B\otimes B',\psi\otimes \psi'))/I\cong C(\G^{aut}(B',\psi'))\ast_w \cautd$$
where $I\subset C(\G^{aut}(B\otimes B',\psi\otimes \psi'))$ is the closed two-sided $\ast$-ideal generated by the relations corresponding to the condition $d\otimes \eta_{B'}\eta^*_{B'}\in\End(U)$ and $U$ denotes the fundamental representation of $\G^{aut}(B\otimes B',\psi\otimes \psi')$.
\end{theorem}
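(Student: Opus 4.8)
The plan is to present both C*-algebras as the Woronowicz algebras (via Tannaka--Krein duality, \cite{wor88}) of two concrete rigid C*-tensor categories realised on the \emph{same} Hilbert space $B\ot B'$, and then to prove that these two categories of intertwiners coincide. Throughout write $\tilde d:=d\ot\eta_{B'}\eta_{B'}^*\in\mathcal{L}(B\ot B')$ and $\mathbb{L}:=\G^{aut}(B',\psi')\wr_*\autd$.

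First I would identify the left-hand side. By the very definition of the quantum automorphism group of a finite quantum graph, the quotient $C(\G^{aut}(B\ot B',\psi\ot\psi'))/I$ is nothing but $C(\G^{aut}(B\ot B',\psi\ot\psi',\tilde d))$, the fundamental representation $U$ being forced to satisfy $\tilde d\in\End(U)$. Consequently Proposition \ref{intertwinersautd} applies and gives, for all $k,l$, $\Hom(U^{\ot k},U^{\ot l})=V_{\tilde d}(k,l)$, the span of composable products of the elementary maps $m_{B\ot B'},\eta_{B\ot B'},\tilde d$ and their adjoints (tensored with identities).

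Next I would produce the matching representation of $\mathbb{L}$ on $B\ot B'$. Let $u'$ be the fundamental representation of $\G^{aut}(B',\psi')$ and put $w:=a(u')\in\mathcal{L}(B\ot B')\ot C(\mathbb{L})$, using the functor $a(\cdot)$ of Proposition \ref{PropFunctor} (which applies verbatim to the graph case, $\mathbb{L}$ being a quotient). I would then check that $w$ obeys the defining relations of $C(\G^{aut}(B\ot B',\psi\ot\psi'))/I$. Unitarity is Proposition \ref{PropFunctor}(2). Since $m_{B'}\in\Hom(u'\ot u',u')$, Proposition \ref{PropFunctor}(5) gives $a(m_{B'})=(m_B\ot m_{B'})\Sigma_{23}=m_{B\ot B'}\in\Hom(w\ot w,w)$. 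As $\eta_{B'}\in\Hom(1,u')$, part (3) yields the isometry $\iota:=\id_B\ot\eta_{B'}\in\Hom(a(1),w)$, and composing with $\eta_B\in\Hom(1,a(1))$ gives $\eta_{B\ot B'}=\iota\,\eta_B\in\Hom(1,w)$. Finally the free wreath product relation $d\in\End(a(1))$ produces $\iota\, d\,\iota^*\in\End(w)$, and evaluating on $x\ot y$ shows $\iota\, d\,\iota^*=d\ot\eta_{B'}\eta_{B'}^*=\tilde d$. By universality this defines a surjection $\Phi$ with $(\id\ot\Phi)(U)=w$; surjectivity follows from Remark \ref{remfondrep}, since $u'$ generates the representation category of $\G^{aut}(B',\psi')$, so the coefficients of $w=a(u')$ generate $C(\mathbb{L})$.

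It then remains to prove $\Phi$ is injective, i.e. that the concrete tensor categories agree: $\Hom_{\mathbb{L}}(w^{\ot k},w^{\ot l})=V_{\tilde d}(k,l)$ for all $k,l$; Tannaka--Krein duality then upgrades $\Phi$ to the asserted $*$-isomorphism carrying $U$ to $w$. The inclusion $\supseteq$ is immediate from the previous paragraph. The reverse inclusion is the crux. Expanding $w=a(u')$ into its isotypic components and using Theorem \ref{intertwrpr} together with the extra endomorphism $d\in\End(a(1))$, the space $\Hom_{\mathbb{L}}(w^{\ot k},w^{\ot l})$ is generated, as a tensor category, by the maps $T_{p,S}$ attached to well-decorated noncrossing partitions and by the transported endomorphism $\tilde d=\iota d\iota^*$. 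In a $T_{p,S}$ the partition $p$ contributes only the base maps $m_B,\eta_B$ while, by Theorem \ref{intertautb} applied to $\G^{aut}(B',\psi')$, each fibre decoration $S$ is a product of $m_{B'},\eta_{B'}$; the heart of the matching is the bookkeeping --- tracking the reordering isometries $s_{p,U},s_{p,L}$ and the flips $\Sigma_{23}$ --- that pairs each base map with the corresponding fibre map into a genuine $B\ot B'$ elementary map $m_{B\ot B'}$ or $\eta_{B\ot B'}$, placing $T_{p,S}$ in $V_{\tilde d}$. The delicate instance, which I expect to be the main obstacle, is the decoration $S=\eta_{B'}\eta_{B'}^*$ on a block carrying no $\G^{aut}(B',\psi')$-structure: it produces the fibration projection $\id_B\ot\eta_{B'}\eta_{B'}^*$ onto the subrepresentation $a(1)\subset w$, and this must be seen to lie in $V_{\tilde d}$, which is exactly what the ideal relation supplies --- the idempotence of $\eta_{B'}\eta_{B'}^*$ gives $\tilde d^{\,n}=d^{\,n}\ot\eta_{B'}\eta_{B'}^*\in\End(U)$, from which $\id_B\ot\eta_{B'}\eta_{B'}^*$ is recovered. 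Granting this matching, the two concrete categories coincide and the isomorphism follows.
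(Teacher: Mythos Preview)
Your approach is genuinely different from the paper's. The paper does not compare intertwiner categories at all; it builds two explicit unital $*$-homomorphisms --- $\Psi\colon M\to N$ (essentially your surjection) and an inverse $\Phi\colon N\to M/I$ --- and checks on generators that they invert one another. For the inverse the paper defines isometries $Q_k=\delta^{-(k-1)/2}\Sigma_k^*((m_B^{(k)})^*\otimes S_k)$ (with $S_k\in\Hom(u_k,u^{\otimes k})$), sets $\widetilde A_k=(Q_k^*\otimes 1)V^{\otimes k}(Q_k\otimes 1)$, and verifies the free-wreath relations for the $\widetilde A_k$. The technical core is an inner lemma introducing the maps $\phi_{k,l}(T)=\Sigma_l^*\bigl((m_B^{(l)})^*m_B^{(k)}\otimes T\bigr)\Sigma_k$ and proving, by induction on $k$ and noncrossing-partition calculus on the $B$-factor, that $\phi_{k,l}(T)\in\Hom(V^{\otimes k},V^{\otimes l})$ whenever $T$ intertwines tensor powers of $u'$. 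This is what forces $Q_kQ_k^*=\delta^{-(k-1)}\phi_{k,k}(S_kS_k^*)\in\End(V^{\otimes k})$, hence the unitarity of the $\widetilde A_k$, and it also reduces the morphism relation $(m_B\otimes R)\Sigma_{23}\in\Hom(\widetilde A_k\otimes\widetilde A_l,\widetilde A_t)$ to the single identity $Q_t(m_B\otimes R)\Sigma_{23}(Q_k^*\otimes Q_l^*)=\delta^{-(k+l+t-3)/2}\,\phi_{k+l,t}\bigl(S_tR(S_k^*\otimes S_l^*)\bigr)$.

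Your Tannaka--Krein route is natural and your surjection matches the paper's $\Psi$, but the injectivity half has real gaps. First, Theorem \ref{intertwrpr} describes the intertwiners of $\wprhg$ \emph{without} the graph datum; once you pass to $\mathbb L=\G^{aut}(B',\psi')\wr_*\autd$ the intertwiner spaces are strictly larger, generated as a tensor category by the decorated-partition maps \emph{together with} $d\in\End(a(1))$, and you give no description of that generated category --- so the promised ``bookkeeping'' reduction of each $T_{p,S}$ to elementary $B\otimes B'$ maps is not actually available. That bookkeeping is precisely what the paper's lemma on the $\phi_{k,l}$ carries out by explicit computation, and it is the whole content of the theorem. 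Second, your proposed resolution of what you correctly identify as the main obstacle does not work: from $\tilde d^{\,n}=d^n\otimes\eta_{B'}\eta_{B'}^*$ one cannot in general recover $\id_B\otimes\eta_{B'}\eta_{B'}^*$, since $\id_B$ need not lie in the algebra generated by $d,d^*$ (take $d$ nilpotent, or any $d$ with nontrivial kernel). The paper instead invokes $\id_B\otimes\eta_{B'}\eta_{B'}^*\in\End(V)$ directly from the ideal $I$ at the very outset --- it is used both to get $\widetilde A_0,\widetilde A_1$ unitary and as the base case $\phi_2\in\End(V^{\otimes 2})$ of the induction --- so you would need an argument at least that strong, not a polynomial-in-$d$ trick.
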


\begin{proof}
We fix the notations $M=C(\G^{aut}(B\otimes B',\psi\otimes \psi'))$ and $N=C(\G^{aut}(B',\psi'))\ast_w \cautd$. Let $u\in\mathcal{L}(B')\ot C(\G^{aut}(B',\psi'))$ be the fundamental representation of $\G^{aut}(B',\psi')$. Choose a complete set of representative of irreducible representations $u_n\in\mathcal{L}(H_n)\ot C(\G^{aut}(B',\psi'))$ with $u_0=1$ and $u\simeq u_0\oplus u_1$. Define the unitary representation $v\in\mathcal{L}(B\ot B')\ot N$ of $\G^{aut}(B',\psi')\wr_*\autd$ by $v=a(u)=(\id_{B}\ot\eta_{B'}\ot 1_N)a(u_0)(\id_B\ot\eta_{B'}^*\ot 1_N)+(\id_{B}\ot S_1\ot 1_N)a(u_1)(\id_{B}\ot S_1^*\ot 1_N)$, where $S_1\in\Hom(u_1,u)$ is the unique isometry, up to $\mathbb{S}^1$, such that $\eta_{B'}\eta_{B'}^*$ and $S_1S_1^*$ are two orthogonal projections such that $\eta_{B'}\eta_{B'}^*+S_1S_1^*=\id_{B'}$.

We claim that there exists a unital $*$-homomorphism $\Psi\,:\, M\rightarrow N$ such that $(\id\otimes\Psi)(U)=v$. By the universal property of the C*-algebra $M$, it suffices to check the following conditions.
\begin{enumerate}
\item $\eta_{B\otimes B'}\in\Hom(1,v)$.
\item $m_{B\otimes B'}\in\Hom(v^{\otimes 2},v)$.
\end{enumerate}
Let us prove $(1)$. Since $\eta_{B'}\eta_{B'}^*$ and $S_1S_1^*$ are orthogonal we have $S_1^*\eta_{B'}=0$; it follows that $(\id_{B}\ot S_1^*)\eta_{B\ot B'}=0$. Hence, $v(\eta_{B\ot B'}\ot 1_N)=(id_{B}\ot\eta_{B'}\ot 1_N)a(u_0)\left((\id_B\ot\eta_{B'}^*)\circ\eta_{B\ot B'})\ot 1_N\right)$. Since  $(id_{B}\ot\eta_{B'}^*)\circ\eta_{B\ot B'}=\eta_B\in\Hom(1,a(u_0))$ and $(id_{B}\ot\eta_{B'})\circ\eta_B=\eta_{B\ot B'}$ we find
$$v(\eta_{B\ot B'}\ot 1_N)=(id_{B}\ot\eta_{B'}\ot 1_N)a(u_0)(\eta_B\ot 1_N)=\left((id_{B}\ot\eta_{B'})\circ\eta_B)\ot 1_N\right)=\eta_{B\ot B'}\ot 1_N.$$
This proves $(1)$. Let us prove $(2)$. Observe that $m_{B\ot B'}=(m_B\ot m_{B'})\Sigma_{23}=a(m_{B'})$, where $m_{B'}\in\Hom(u\ot u,u)$. It follows from Proposition \ref{PropFunctor} that $m_{B\ot B'}\in\Hom(v\ot v,v)$.

Let $\pi:M\longrightarrow M/I$ be the canonical quotient map. If we apply assertion (2) of Proposition \ref{PropFunctor} we have that $d\ot \eta_{B'}\eta_{B'}^*=(\id_B\ot\eta_{B'})d(\id_B\ot\eta_{B'}^*)\in\End(v)$ because $\eta_{B'}\in\Hom(1,u)$ and $d\in\End(a(1_{\G}))$. It follows that the map $\Psi$ can be factorized through $M/I$. This means that there exists a unique map $\widetilde{\Psi}:M/I\longrightarrow N$ such that $v=(\idbb\ot \Psi)(U)=(\idbb\ot \widetilde{\Psi})(\idbb\ot \pi)(U)$.

In order to construct the inverse homomorphism we first introduce some notations. All the following definitions are given, unless otherwise stated, for $k\in\N, k\geq 1$. Consider the unitary operator  $\Sigma_k:(B\ot B')^{\ot k}\longrightarrow B^{\ot k}\ot B'^{\ot k}$, $\bigotimes_{i=1}^k(b_i\ot b'_i)\mapsto \bigotimes_{i=1}^k b_i\ot \bigotimes_{i=1}^k b'_i$, where $b_i\in B$ and $b_i'\in B'$. We observe that, with this new notation $\Sigma_2=\Sigma_{23}$.

Let $m_B^{(k)}:B^{\ot k}\longrightarrow B$ be the map which multiplies $k$ elements of $B$; we set $m_B^{(1)}=\id_B$ by convention. We observe that this map is unique and well defined by the associativity of the multiplication. In particular, we have $m_B^{(2)}=m_B$. We claim that, for any $k\geq 2$, $m_B^{(k)}(m_B^{(k)})^*=\delta^{k-1}\id_B$. The proof is by induction. If $k=1$, it is trivially true; if $k=2$, it is clear that $m_Bm_B^*=\delta \id_B$. Let us suppose the result true for $k=l$, i.e. $m_B^{(l)}(m_B^{(l)})^*=\delta^{l-1}\id_B$. We have that $m_B^{(l+1)}=m_B^{(l)}(m_B\ot \id_B^{\ot l-1})$ by associativity. Hence, $m_B^{(l+1)}(m_B^{(l+1)})^*=m_B^{(l)}(m_Bm_B^*\ot \id_B^{\ot l-1})(m_B^{(l)})^*=\delta m_B^{(l)}(m_B^{(l)})^*=\delta^{l}\id_B$. Then, the equality is true for $k=l+1$. This completes the proof.

Define the map $T_k=(m_B^{(k)}\ot \id_{B'}^{\ot k})\Sigma_k\in\mathcal{L}((B\ot B')^{\ot k}, B\ot B'^{\ot k})$.
Let $S_k\in\Hom(u_k,u^{\ot k})$ be the unique isometry, up to $\mathbb{S}^1$, and define the isometry $Q_k=\delta^{-\frac{k-1}{2}}T_k^*\circ (\id_B\ot S_k)\in\mathcal{L}(B\ot H_k,(B\ot B')^{\ot k})$ for $k\geq 1$.
For $k=0$, we define the isometry $Q_0=\id_B\ot\eta_{B'}\in\Hom(a(u_0),v)$.

Finally, for $k\geq 1$, consider the elements $A_k=(Q_k^*\ot 1_M)U^{\ot k}(Q_k\ot 1_M)\in\mathcal{L}(B\ot H_k)\ot M$ and, for $k=0$, $A_0=(Q_0^*\ot 1_M)U(Q_0\ot 1_M)\in\mathcal{L}(B)\ot M$. Denote by $\widetilde{A}_k=(\id\ot\pi)(A_k)\in\mathcal{L}(B\ot H_k)\ot M/I$ the projections of the $A_k,k\in\N$ on the quotient. Let $V=(\id\ot\pi)(U)\in\mathcal{L}(B\ot B')\ot M/I$ be the projection of the fundamental representation $U$.

We claim that there exists a unital $\ast$-homomorphism $\Phi:N\longrightarrow M/I$ such that $(\id\ot\Phi)(a(u_k))=\widetilde{A}_k$ for all $k\in\N$. By the universal property of the C*-algebra $N$ it suffices to check the following.
\begin{enumerate}
\item $A_0(\eta_B\ot 1_M)=\eta_B\ot 1_M$.
\item $\widetilde{A}_k\in\mathcal{L}(B\ot H_k)\ot M/I$ is unitary for all $k\geq 0$.
\item For all $k,l,t\in\N$ and $R\in\Hom(u_k\ot u_l,u_t)$, $(m_B\ot R)\Sigma_{23}\in\Hom(\widetilde{A}_k\ot \widetilde{A}_l,\widetilde{A}_t)$.
\item $d\in\End(\widetilde{A}_0)$.
\end{enumerate}

Let us prove $(1)$. Since $\eta_B\ot\eta_{B'}=\eta_{B\ot B'}\in\Hom(1,U)$, we have
\begin{eqnarray*}
A_0(\eta_B\ot 1_M)&=&(\id_B\ot\eta_{B'}^*\ot 1_M)U(\id_B\ot\eta_{B'}\ot 1_M)(\eta_B\ot 1_M)\\&=&(\id_B\ot\eta_{B'}^*\ot 1_M)U(\eta_B\ot\eta_{B'}\ot 1_M)=\eta_B\ot\eta_{B'}^*\eta_{B'}\ot 1_M=\eta_B\ot 1_M.
\end{eqnarray*}
Let us prove $(2)$. We have $A_kA_k^*=(Q_k^*\ot 1_M)U^{\ot k}(Q_kQ_k^*\ot 1_M)(U^{\ot k})^*(Q_k\ot 1_M)$. Moreover, $Q_0Q_0^*=\id_B\ot\eta_{B'}\eta_{B'}^*$ and $Q_1Q_1^*=\id_B\ot S_1S_1^*=\id_{B}\ot(\id_{B'}-\eta_{B'}\eta_{B'}^*)=\id_{B\ot B'}-Q_0Q_0^*$. By definition of $I$, we have $V(Q_0Q_0^*\ot 1_{M/I})=(Q_0Q_0^*\ot 1_{M/I})V$ and,
$$
V(Q_1Q_1^*\ot 1_{M/I})=V(1-Q_0Q_0^*\ot 1_{M/I})=(1-Q_0Q_0^*\ot 1_{M/I})V=(Q_1Q_1^*\ot 1_{M/I})V.
$$
It follows that, for $k=0,1$,
\begin{eqnarray*}
\widetilde{A}_k\widetilde{A}_k^*&=&(Q_k^*\ot 1_{M/I})V(Q_kQ_k^*\ot 1_{M/I})V^*(Q_k\ot 1_{M/I})
=(Q_k^*Q_kQ_k^*\ot 1_{M/I})VV^*(Q_k\ot 1_{M/I})\\
&=&(Q_k^*Q_kQ_k^*Q_k\ot 1_{M/I})=\id_{B\ot H_k}\ot 1_{M/I}.
\end{eqnarray*}

\noindent
The proof of $\widetilde{A}_k^*\widetilde{A}_k=\id_{B\ot H_k}\ot 1_{M/I}$ when $k=0,1$ is the same.

In order to prove $(2)$ when $k\geq 2$ and to check $(3)$ we need the following lemma.

\begin{lemma}\label{lemphi}
For all $k,l\in\N,\, k,l\geq 1$ and $T\in\mathcal{L}(B'^{\ot k},B'^{\ot l})$ we define the map
$$\phi_{k,l}(T)=\Sigma_l^* ((m_B^{(l)})^*m_B^{(k)}\ot T)\Sigma_k\in\mathcal{L}((B\ot B')^{\ot k},(B\ot B')^{\ot l})$$
Define $\phi_k:=\phi_{k,k}(\id_{B'}^{\ot k})$. The following holds.
\begin{itemize}
\item[(i)] $\phi_k\in\End(V^{\ot k})$.
\item[(ii)] For all $T\in\mathcal{L}(B'^{\ot k},B'^{\ot l})$ and $S\in\mathcal{L}(B'^{\ot l},B'^{\ot t})$ we have:
\begin{itemize}
\item[$\bullet$] $\phi_{l,t}(S)\phi_{k,l}(T)=\delta^{l-1}\phi_{k,t}(S\circ T)$,
\item[$\bullet$] $\phi_{k,l}(T)^*=\phi_{l,k}(T^*)$.
\end{itemize}
\item[(iii)] $\phi_{k,k-1}(\id_{B'}^{\ot s}\ot m_{B'}\ot \id_{B'}^{\ot s'})=\phi_{k-1}\circ(\idbb^{\ot s} \ot m_{B\ot B'}\ot\idbb^{\ot s'})\in\Hom(V^{\ot k},V^{\ot k-1})$ for all $k\geq 2$ and all $s,s'\geq 0$ such that $s+s'+2=k$.
\item[(iv)] \begin{itemize}
\item[$\bullet$] $\phi_{k,k-1}(\id_{B'}^{\ot s}\ot \eta^*_{B'}\ot \id_{B'}^{\ot s'})=\phi_{k-1}\circ(\idbb^{\ot s-1}\ot (m_B\ot id_{B'}\ot \eta^*_{B'})\Sigma_{23}\ot \idbb^{\ot s'})\in\Hom(V^{\ot k},V^{\ot k-1})$ for all $k\geq 2$ and all $s\geq 1,s'\geq 0$ such that $s+s'+1=k$,
\item[$\bullet$] $\phi_{k,k-1}( \eta^*_{B'}\ot \id_{B'}^{\ot k-1})=\phi_{k-1}\circ((m_B\ot \eta^*_{B'}\ot id_{B'})\Sigma_{23}\ot \idbb^{\ot k-2})\in\Hom(V^{\ot k},V^{\ot k-1})$ for all $k\geq 2$.
\end{itemize}
\item[(v)] $\phi_{k,l}(T)\in\Hom(V^{\ot k},V^{\ot l})$ for all $T\in\Hom(u^{\ot k},u^{\ot l})$.
\end{itemize}
\end{lemma}
\begin{proof}
$(i)$. If $k=1$, $\phi_1=\idbb\in\End(V)$. When $k\geq 2$ we prove the result by induction on $k$. If $k=2$, we have $\phi_2=\Sigma_{23}(m_B^*m_B\ot \id_{B'}^{\ot 2})\Sigma_{23}$. We want to prove that $\phi_2\in\End(V^{\ot 2})$. Define $L=\Sigma_3^*((m_B^{(3)})^*m_B\ot \id_{B'}\ot \eta_{B'}\ot \id_{B'})\Sigma_2$.
We claim that 
\begin{equation}\label{relclaim1}
L=(\idbb\ot m_{B\ot B'}\ot \idbb)(\Sigma_2^*(m_B^*\ot \id_{B'}\ot \eta_{B'})\ot \Sigma_2^*(m_B^*\ot \eta_{B'}\ot \id_{B'})).
\end{equation}
Let us evaluate the two maps on the element $b_1\ot b_1'\ot b_2\ot b_2'\in(B\ot B')^{\ot 2}$. The left hand side is:
\begin{eqnarray*}
\Sigma_3^*((m_B^{(3)})^*m_B\ot \id_{B'}\ot \eta_{B'}\ot \id_{B'})\Sigma_2(b_1\ot b_1'\ot b_2\ot b_2')
&=&\Sigma_3^*((m_B^{(3)})^*m_B(b_1\ot b_2)\ot b_1'\ot 1_{B'}\ot b_2'\\&=&
\Sigma b_{(1)}^{12}\ot b_1'\ot b_{(2)}^{12} \ot 1_{B'}\ot b_{(3)}^{12}\ot b_2'.
\end{eqnarray*}

where we used the notation $(m_B^{(3)})^*m_B(b_1\ot b_2)=\Sigma b_{(1)}^{12}\ot b_{(2)}^{12} \ot b_{(3)}^{12}$. The right hand side is:
\begin{eqnarray*}
&&(\idbb\ot m_{B\ot B'}\ot \idbb)(\Sigma_2^*(m_B^*\ot \id_{B'}\ot \eta_{B'})\ot \Sigma_2^*(m_B^*\ot \eta_{B'}\ot \id_{B'}))(b_1\ot b_1'\ot b_2\ot b_2')\\
&=&(\idbb\ot m_{B\ot B'}\ot \idbb)(\Sigma_2^*(m_B^*(b_1)\ot b_1'\ot 1_{B'})\ot \Sigma_2^*(m_B^*(b_2)\ot 1_{B'}\ot b_2'))\\
&=&(\idbb\ot m_{B\ot B'}\ot \idbb)(\Sigma b_{(1)}^1\ot b_1'\ot b_{(2)}^1\ot 1_{B'}\ot b_{(1)}^2\ot 1_{B'}\ot b_{(2)}^2\ot b_2')\\
&=&(\Sigma b_{(1)}^1\ot b_1'\ot m_B(b_{(2)}^1\ot b_{(1)}^2)\ot 1_{B'}\ot b_{(2)}^2\ot b_2').
\end{eqnarray*}
where we used the notation $m_B^*(b_i)=\Sigma b_{(1)}^i\ot b_{(2)}^i$ for $i=1,2$. Note that the two elements of $(B\ot B')^{\ot 3}$ we obtained are equal if and only if $(m_B^{(3)})^*m_B=(\id_B \ot m_B\ot \id_B)(m_B^*\ot m_B^*)$. This can be verified by drawing the noncrossing partitions associated to the different maps and by using the compatibility with respect to the multiplication proved in Proposition \ref{propmap}.
In both cases, the noncrossing partition obtained after the composition is

{\centering
 \begin{tikzpicture}[thick,font=\small]
    \path (0.5,-.5) node(a){$\bullet$}
          (1.5,-.5) node(c){$\bullet$} 
          (0,-2) node(d){$\bullet$} 
          (1,-2) node(e){$\bullet$} 
          (2,-2) node(f){$\bullet$};

	    \draw (0.5,-.5) -- +(0,-0.5) -| (1.5,-.5);
	    \draw (0,-2) -- +(0,+0.5) -| (1,-2);
	    \draw (2,-2) -- +(0,+0.5) -| (1,-2);        
	    \draw (1,-1) -- (1,-1.5);

 \end{tikzpicture}
\par}

It follows that relation \ref{relclaim1} is verified.

Define $T=(m_B\ot\eta_{B'}^*\ot\id_{B'})\Sigma_{23}$ and note that $m_{B'}\circ(\eta_{B'}\eta_{B'}^*\ot\id_{B'})=\eta_{B'}^*\ot\id_{B'}$. Hence,
\begin{eqnarray*}
T&=&\left[m_B\ot(m_{B'}\circ(\eta_{B'}\eta_{B'}^*\ot\id_{B'}))\right]\Sigma_{23}=\left[(m_B\ot m_{B'})\circ(\id_B\ot\id_B\ot\eta_{B'}\eta_{B'}^*\ot\id_{B'}))\right]\Sigma_{23}\\
&=&(m_B\ot m_{B'})\Sigma_{23}(\id_B\ot\eta_{B'}\eta_{B'}^*\ot\id_{B}\ot\id_{B'})=m_{B\ot B'}\circ((\id_B\ot\eta_{B'}\eta_{B'}^*)\ot\id_{B\ot B'}).
\end{eqnarray*}
By definition of $I$ we have $\id_B\ot\eta_{B'}\eta_{B'}^*\in\End(V)$ and since $m_{B\ot B'}\in\Hom(U^{\ot 2},U)$ we deduce that $T\in\End(V^{\ot 2},V)$. Similarly, if we define $Z=(m_B\ot\id_{B'}\ot\eta_{B'}^*)\Sigma_{23}$ then, by using the relation $\id_{B'}\ot\eta_{B'}^*=m_{B'}\circ(\id_{B'}\ot\eta_{B'}\eta_{B'}^*)$, we find $Z=m_{B\ot B'}\circ(\id_{B\ot B'}\ot(\id_B\ot\eta_{B'}\eta_{B'}^*))$. Hence $Z\in\Hom(V^{\ot 2},V)$. Finally, note that $L=(\idbb\ot m_{B\ot B'}\ot \idbb)(Z^*\ot T^*)$. Hence $L\in\Hom(V^{\ot 2},V^{\ot 3})$ and since we have
\begin{eqnarray*}
L^*L&=&
\Sigma_2^*(m_B^*m_B^{(3)}\ot \id_{B'}\ot \eta_{B'}^*\ot \id_{B'})\Sigma_3\Sigma_3^*((m_B^{(3)})^*m_B\ot \id_{B'}\ot \eta_{B'}^*\ot \id_{B'})\Sigma_2\\ &=&
\Sigma_2^*(m_B^*m_B^{(3)}(m_B^{(3)})^*m_B\ot \id_{B'}^{\ot 2})\Sigma_2=\delta^2\phi_2,
\end{eqnarray*}
where we used that $m_B^{(3)}(m_B^{(3)})^*=\delta^{2}\id_B$, it follows that $\phi_2\in\End(V^{\ot 2})$.

Now, let us prove that, if $\phi_{k-1}\in\End(V^{\ot k-1})$, then $\phi_{k}\in\End(V^{\ot k})$.  We first claim that, for any $k\geq 2$, the following holds
\begin{equation}\label{decphik}
\phi_k=(\idbb^{\ot k-2}\ot \phi_2)(\phi_{k-1}\ot \idbb).
\end{equation} 
Let us evaluate this map on the general element $\bigotimes_{i=1}^k (b_i\ot b_i')\in(B\ot B')^{\ot k}$. We have:
\begin{eqnarray*}
&&(\idbb^{\ot k-2}\ot \phi_2)(\phi_{k-1}\ot \idbb)(\bigotimes_{i=1}^k (b_i\ot b_i'))\\
&=&(\idbb^{\ot k-2}\ot \phi_2)[(\Sigma_{k-1}^*((m_B^{(k-1)})^*m_B^{(k-1)}\ot\id_{B'}^{\ot k-1})\Sigma_{k-1})\ot \idbb](\bigotimes_{i=1}^k (b_i\ot b_i'))\\
&=&(\idbb^{\ot k-2}\ot \Sigma_2^*(m_B^*m_B\ot\id_{B'}^{\ot 2})\Sigma_{2})(\Sigma_{k-1}^*(\Sigma\bigotimes_{i=1}^{k-1}b_{(i)}^{C_1}\ot \bigotimes_{i=1}^{k-1}b'_i)\ot b_k \ot b_k')\\
&=&\Sigma \bigotimes_{i=1}^{k-2}(b_{(i)}^{C_1}\ot b_i')\ot \Sigma_2^*(m_B^*m_B(b_{(k-1)}^{C_1}\ot b_k)\ot b_{k-1}'\ot b_k')=\Sigma \bigotimes_{i=1}^{k-2}(b_{(i)}^{C_1}\ot b_i')\ot b_{(1)}^{C_2}\ot b_{k-1}'\ot  b_{(2)}^{C_2}\ot b_{k}',
\end{eqnarray*}
where we used the notations $(m_B^{(k-1)})^*m_B^{(k-1)}(\bigotimes_{i=1}^{k-1} b_i)=\Sigma \bigotimes_{i=1}^{k-1} b_{(i)}^{C_1}$ and $m_B^*m_B(b_{(k-1)}^{C_1}\ot b_k)=\Sigma  b_{(1)}^{C_2}\ot b_{(2)}^{C_2}$. When we evaluate $\phi_k$ according to its definition, we get
$$
\Sigma_{k}^*((m_B^{(k)})^*m_B^{(k)}\ot\id_{B'}^{\ot k})\Sigma_{k}(\bigotimes_{i=1}^k (b_i\ot b_i'))=\Sigma \bigotimes_{i=1}^k (b_{(i)}^{C_3}\ot b_i')
$$
where we used the notation $(m_B^{(k)})^*m_B^{(k)}(\bigotimes_{i=1}^{k} b_i)=\Sigma \bigotimes_{i=1}^{k} b_{(i)}^{C_3}$. The two elements obtained are equal if and only if 
$$(m_B^{(k)})^*m_B^{(k)}=(\id_B^{\ot k-2}\ot m_B^*m_B)((m_B^{(k-1)})^*m_B^{(k-1)}\ot \id_B).$$

This formula can be verified by considering the noncrossing partitions associated to the different maps and by applying Proposition \ref{propmap}. We have

{\centering
 \begin{tikzpicture}[thick,font=\small]
    \path (1,0) node(a){$\bullet$}
    		  (1,.3) node(a){$1$}
          (2,0) node(c){$\bullet$} 
          (2,.3) node(a){$2$}
          (3.5,0) node(a){$\dots$}
          (5,0) node(c){$\bullet$}
          (6,0) node(c){$\bullet$}
          (7,0) node(c){$\bullet$}
          (6,.3) node(a){$k-1$}
          (7,.3) node(a){$k$}
          
          (1,-2.3) node(a){$1$}
          (2,-2.3) node(a){$2$}
          (6,-2.3) node(a){$k-1$}
          (7,-2.3) node(a){$k$}

		  (1,-1) node(a){$\bullet$}
          (2,-1) node(c){$\bullet$} 
         (3.5,-1.5) node(a){$\dots$}
          (5,-1) node(c){$\bullet$}
          (6,-1) node(c){$\bullet$}
          (7,-1) node(c){$\bullet$}          
          
          (1,-2) node(a){$\bullet$}
          (2,-2) node(c){$\bullet$} 
         % (2,-1) node(a){$\bullet$}
          (5,-2) node(c){$\bullet$}
          (6,-2) node(c){$\bullet$}
          (7,-2) node(c){$\bullet$}
          
          (7.5,-1) node(a){$=$}
          
          (8,-.5) node(a){$\bullet$}
          (9,-.5) node(c){$\bullet$} 
          (10,-.5) node(a){$\dots$}
          (11,-.5) node(c){$\bullet$}
          (12,-.5) node(c){$\bullet$}
          
          (8,-.2) node(a){$1$}
          (9,-.2) node(a){$2$}
          (11,-.2) node(a){$k-1$}
          (12,-.2) node(a){$k$}
          
          (8,-1.8) node(a){$1$}
          (9,-1.8) node(a){$2$}
          (11,-1.8) node(a){$k-1$}
          (12,-1.8) node(a){$k$}
          
          (8,-1.5) node(a){$\bullet$}
          (9,-1.5) node(c){$\bullet$} 
          (10,-1.5) node(a){$\dots$}
          (11,-1.5) node(c){$\bullet$}
          (12,-1.5) node(c){$\bullet$};

	    \draw (1,0) -- +(0,-0.3) -| (2,0);
	    \draw (5,0) -- +(0,-0.3) -| (2,0);
	    \draw (5,0) -- +(0,-0.3) -| (6,0);
	    
	    \draw (1,-1) -- +(0,0.3) -| (2,-1);
	    \draw (5,-1) -- +(0,0.3) -| (2,-1);
	    \draw (5,-1) -- +(0,0.3) -| (6,-1);
	    
	    \draw (6,-1) -- +(0,-0.3) -| (7,-1);
	    \draw (6,-2) -- +(0,0.3) -| (7,-2);
	           
	    \draw (7,0) -- (7,-1);
	    \draw (2,-1) -- (2,-2);
	    \draw (1,-1) -- (1,-2);
	    \draw (5,-1) -- (5,-2);
	    \draw (3.5,-.3) -- (3.5,-.7);
	    \draw (6.5,-1.3) -- (6.5,-1.7);

	    \draw (8,-.5) -- +(0,-0.3) -| (9,-.5);
	    \draw (11,-.5) -- +(0,-0.3) -| (9,-.5);
	    \draw (12,-.5) -- +(0,-0.3) -| (11,-.5);
	    
	    \draw (8,-1.5) -- +(0,0.3) -| (9,-1.5);
	    \draw (11,-1.5) -- +(0,0.3) -| (9,-1.5);
	    \draw (12,-1.5) -- +(0,0.3) -| (11,-1.5);
	    
	    \draw (10,-.8) -- (10,-1.2);
	    
 \end{tikzpicture}
\par}
\noindent
This completes the proof of relation \ref{decphik} and finishes the proof of $(1)$ since it shows that the map $\phi_k$ can be obtained through tensor products and compositions of the maps $\idbb, \phi_2, \phi_{k-1}$ which are in $\End(V^{\ot k})$, by the induction hypothesis.

$(ii)$. The composition formula can be checked as follows.
\begin{eqnarray*}
\phi_{l,t}(S)\phi_{k,l}(T) &=&
\Sigma_t^* ((m_B^{(t)})^*m_B^{(l)}\ot S)\Sigma_l\Sigma_l^* ((m_B^{(l)})^*m_B^{(k)}\ot T)\Sigma_k\\&=&
\Sigma_t^* ((m_B^{(t)})^*m_B^{(l)}(m_B^{(l)})^*m_B^{(k)}\ot ST)\Sigma_k\\&=&
\delta^{l-1}\Sigma_t^* ((m_B^{(t)})^*m_B^{(k)}\ot ST)\Sigma_k=
\delta^{l-1}\phi_{k,t}(S\circ T),
\end{eqnarray*}
where we used the relation $m_B^{(l)}(m_B^{(l)})^*=\delta^{l-1}\id_B$. The second formula follows from:
$$\phi_{k,l}(T)^* =(\Sigma_l^* ((m_B^{(l)})^*m_B^{(k)}\ot T)\Sigma_k)^*=\Sigma_k^* ((m_B^{(k)})^*m_B^{(l)}\ot T^*)\Sigma_l=\phi_{l,k}(T^*).$$

$(iii)$. The element $\phi_{k-1}\circ(\idbb^{\ot s} \ot m_{B\ot B'}\ot\idbb^{\ot s'})$ is equal to:
\begin{eqnarray*}
&&\Sigma_{k-1}^* ((m_B^{(k-1)})^*m_B^{(k-1)}\ot \id_{B'}^{\ot k-1})\Sigma_{k-1}(\idbb^{\ot s}\ot m_{B\ot B'}\ot \idbb^{\ot s'})\\
&=&\Sigma_{k-1}^* ((m_B^{(k-1)})^*m_B^{(k-1)}\ot \id_{B'}^{\ot k-1})(\id_B^{\ot s}\ot m_B \ot \id_B^{\ot s'}\ot\id_{B'}^{\ot s}\ot m_{B'}\ot \id_{B'}^{\ot s'})\Sigma_k\\
&=&\Sigma_{k-1}^* ((m_B^{(k-1)})^*m_B^{(k)}\ot \id_{B'}^{\ot s}\ot m_{B'}\ot \id_{B'}^{\ot s'})\Sigma_{k}\\
&=&\phi_{k,k-1}(\id_{B'}^{\ot s}\ot m_{B'}\ot \id_{B'}^{\ot s'}),
\end{eqnarray*}
where the first equality follows from
\begin{eqnarray*}
&&\Sigma_{k-1}(\idbb^{\ot s}\ot m_{B\ot B'}\ot \idbb^{\ot s'})(\bigotimes_{i=1}^k(b_i\ot b'_i))\\
&=&\Sigma_{k-1}(\bigotimes_{i=1}^s(b_i\ot b'_i)\ot m_B(b_{s+1}\ot b_{s+2})\ot m_{B'}(b'_{s+1}\ot b'_{s+2})\ot \bigotimes_{i=s+3}^k(b_i\ot b'_i))\\
&=&\bigotimes_{i=1}^sb_i\ot m_B(b_{s+1}\ot b_{s+2})\ot \bigotimes_{i=s+3}^k b_i\ot
\bigotimes_{i=1}^sb'_i\ot m_{B'}(b'_{s+1}\ot b'_{s+2})\ot \bigotimes_{i=s+3}^k b'_i\\
&=&(\id_B^{\ot s}\ot m_B \ot \id_B^{\ot s'}\ot\id_{B'}^{\ot s}\ot m_{B'}\ot \id_{B'}^{\ot s'})\Sigma_k(\bigotimes_{i=1}^k(b_i\ot b'_i)),
\end{eqnarray*}
and for the second equality we used the relation $m_B^{(k-1)}(\id_B^{\ot s}\ot m_B \ot \id_B^{\ot s'})=m_B^{(k)}$ which is a consequence of associativity. Now, since $m_{B\ot B'}\in\Hom(U^{\ot 2},U)$ and $\phi_{k-1}\in\End(V^{\ot k-1})$ by the assertion $(i)$ of this Lemma, it follows that $\phi_{k,k-1}(\id_{B'}^{\ot s}\ot m_{B'}\ot \id_{B'}^{\ot s'})\in\Hom(V^{\ot k},V^{\ot k-1})$.

$(iv)$. We have
\begin{eqnarray*}
&&\phi_{k-1}\circ(\idbb^{\ot s-1} \ot (m_B\ot \id_{B'}\ot \eta_{B'}^*)\Sigma_{23}\ot\idbb^{\ot s'})\\
&=&\Sigma_{k-1}^* ((m_B^{(k-1)})^*m_B^{(k-1)}\ot \id_{B'}^{\ot k-1})\Sigma_{k-1}(\idbb^{\ot s-1} \ot (m_B\ot \id_{B'}\ot \eta_{B'}^*)\Sigma_{23}\ot\idbb^{\ot s'})\\
&=&\Sigma_{k-1}^* ((m_B^{(k-1)})^*m_B^{(k-1)}\ot \id_{B'}^{\ot k-1})(\id_B^{\ot s-1}\ot m_B \ot \id_B^{\ot s'}\ot\id_{B'}^{\ot s}\ot \eta^*_{B'}\ot \id_{B'}^{\ot s'})\Sigma_k\\
&=&\Sigma_{k-1}^* ((m_B^{(k-1)})^*m_B^{(k)}\ot \id_{B'}^{\ot s}\ot \eta^*_{B'}\ot \id_{B'}^{\ot s'})\Sigma_{k}=\phi_{k,k-1}(\id_{B'}^{\ot s}\ot \eta^*_{B'}\ot \id_{B'}^{\ot s'}).
\end{eqnarray*}
As in the proof of assertion $(iii)$ the second equality can be checked by evaluating the two maps on an element of $(B\ot B')^{\ot k}$ and the third follows from the associativity of $m_B$.\\
If $s=0$ and $s'=k-1$ the formula is slightly different but the computations are analogous. We have
\begin{eqnarray*}
&&\phi_{k-1}\circ((m_B\ot \eta_{B'}^*\ot \id_{B'})\Sigma_{23}\ot\idbb^{\ot k-2})\\
&=&\Sigma_{k-1}^* ((m_B^{(k-1)})^*m_B^{(k-1)}\ot \id_{B'}^{\ot k-1})\Sigma_{k-1}((m_B\ot \eta_{B'}^*\ot \id_{B'})\Sigma_{23}\ot\idbb^{\ot k-2})\\
&=&\Sigma_{k-1}^* ((m_B^{(k-1)})^*m_B^{(k-1)}\ot \id_{B'}^{\ot k-1})(m_B \ot \id_B^{\ot k-2}\ot \eta^*_{B'}\ot \id_{B'}^{\ot k-1})\Sigma_k\\
&=&\Sigma_{k-1}^* ((m_B^{(k-1)})^*m_B^{(k)}\ot \eta^*_{B'}\ot \id_{B'}^{\ot k-1})\Sigma_{k}=\phi_{k,k-1}( \eta^*_{B'}\ot \id_{B'}^{\ot k-1}).
\end{eqnarray*}
Since in the proof of assertion $(i)$ we showed that $(m_B\ot \id_{B'}\ot \eta_{B'}^*)\Sigma_{23}$ and $(m_B\ot \eta_{B'}^*\ot \id_{B'})\Sigma_{23}$ are in $\Hom(V^{\ot 2},V)$ and $\phi_{k-1}\in\End(V^{\ot k-1})$ by $(i)$, we conclude that $\phi_{k,k-1}(\id_{B'}^{\ot s}\ot \eta^*_{B'}\ot \id_{B'}^{\ot s'})$ is in $\Hom(V^{\ot k},V^{\ot k-1})$.

$(v)$. Thanks to Theorem \ref{intertautb} we know that the morphisms associated to the noncrossing partitions in $NC(k,l)$ form a linear basis of $\Hom(u^{\ot k},u^{\ot l})$. Moreover, every morphism of such a basis can be seen as the composition of the morphisms $\id_{B'}^{\ot s}\ot m_{B'}\ot \id_{B'}^{\ot s'}\in\Hom(u^{\ot s+s'+2},u^{\ot s+s'+1})$ and $\id_{B'}^{\ot s}\ot \eta_{B'}\ot \id_{B'}^{\ot s'}\in\Hom(u^{\ot s+s'},u^{\ot s+s'+1})$ and of their adjoints. This fact, together with the assertions $(ii),(iii)$ and $(iv)$ of Lemma \ref{lemphi}, implies that $\phi_{k,l}(T)\in\Hom(V^{\ot k},V^{\ot l})$ for all intertwiner $T\in\Hom(u^{\ot k},u^{\ot l})$.
\end{proof}

Now, we go back to the proof of $(2)$, when $k\geq 2$. We have:
$$
Q_kQ_k^* =\delta^{-(k-1)}T_k^*\circ (\id_B\ot S_kS_k^*)\circ T_k
=\delta^{-(k-1)}\Sigma_k^* ((m_B^{(k)})^*m_B^{(k)}\ot S_kS_k^*)\Sigma_k
=\delta^{-(k-1)}\phi_{k,k}(S_kS_k^*).
$$
Since $S_kS_k^*\in\End(u^{\ot k})$; it follows that $Q_kQ_k^*\in\End(V^{\ot k})$ by Lemma \ref{lemphi} $(v)$. It is now easy to check that $\widetilde{A}_k$ is unitary.

Let us check $(3)$. We have to prove that, for all $k,l,t\in\N$ and $R\in\Hom(u_k\ot u_l,u_t)$, $(m_B\ot R)\Sigma_{23}$ is in $\Hom(\widetilde{A}_k\ot \widetilde{A}_l,\widetilde{A}_t)$. Since the $Q_s$ are isometries and $Q_sQ_s^*\in\End(V^{\ot s})$, we have the following sequence of equivalent conditions.

$\begin{array}{l}
((m_B\ot R)\Sigma_{23}\uq)(Q_k^*\ot Q_l^*\uq)V^{\ot k+l}(Q_k\ot Q_l\uq)=\\ \hspace{7.5cm}(Q_t^*\uq)V^{\ot t}(Q_t\uq)((m_B\ot R)\Sigma_{23}\uq)
\vspace{.5cm}\end{array}$

$\begin{array}{l}
(Q_t\uq)((m_B\ot R)\Sigma_{23}\uq)(Q_k^*\ot Q_l^*\uq)V^{\ot k+l}(Q_kQ_k^*\ot Q_lQ_l^*\uq)=\\ \hspace{3.8cm} (Q_tQ_t^*\uq)V^{\ot t}(Q_t\uq)((m_B\ot R)\Sigma_{23}\uq)(Q_k^*\ot Q_l^*\uq)\vspace{.5cm}\end{array}$

$\begin{array}{l}
(Q_t\uq)((m_B\ot R)\Sigma_{23}\uq)(Q_k^*Q_kQ_k^*\ot Q_l^*Q_lQ_l^*\uq)V^{\ot k+l}=\\ \hspace{5.5cm} V^{\ot t}(Q_tQ_t^*Q_t\uq)((m_B\ot R)\Sigma_{23}\uq)(Q_k^*\ot Q_l^*\uq)\vspace{.5cm}\end{array}$

$\begin{array}{l}
(Q_t\uq)((m_B\ot R)\Sigma_{23}\uq)(Q_k^*\ot Q_l^*\uq)V^{\ot k+l}=\\ \hspace{6.5cm} V^{\ot t}(Q_t\uq)((m_B\ot R)\Sigma_{23}\uq)(Q_k^*\ot Q_l^*\uq)
\end{array}$

Then, the original condition is equivalent to
$$Q_t(m_B\ot R)\Sigma_{23}(Q_k^*\ot Q_l^*)\in\Hom(V^{\ot k+l},V^{\ot t}).$$
Now, if we replace every $Q_i$ with its definition and write $K:=\delta^{-\frac{k+l+t-3}{2}}$, we get
\begin{eqnarray*}
Q_t(m_B\ot R)\Sigma_{23}(Q_k^*\ot Q_l^*)&=&
K\Sigma_t^*(m_B^{(t)*}\ot \id_{B'}^{\ot t})(\id_B\ot S_t)(m_B\ot R)\Sigma_{23}\circ\ldots\\
&&\ldots\circ[(\id_B\ot S_k^*)(m_B^{(k)}\ot \id_{B'}^{\ot k})\Sigma_k\ot(\id_B\ot S_l^*)(m_B^{(l)}\ot\id_{B'}^{\ot l})\Sigma_l]\\
&=&K\Sigma_t^*(m_B^{(t)*}\ot S_t)(m_B\ot R)\Sigma_{23}(m_B^{(k)}\ot m_B^{(l)}\ot S_k^*\ot S_l^*)(\Sigma_k\ot\Sigma_l)\\
&=&K\Sigma_t^*(m_B^{(t)*}\ot S_t)(m_B\ot R)(m_B^{(k)}\ot m_B^{(l)}\ot S_k^*\ot S_l^*)\widetilde{\Sigma_{23}}(\Sigma_k\ot\Sigma_l)\\
&=&K\Sigma_t^*(m_B^{(t)*}m_B^{(k+l)}\ot S_tR(S_k^*\ot S_l^*))\widetilde{\Sigma_{23}}(\Sigma_k\ot\Sigma_l)\\
&=&K\Sigma_t^*(m_B^{(t)*}m_B^{(k+l)}\ot S_tR(S_k^*\ot S_l^*))\Sigma_{k+l}=K\phi_{k+l,t}(S_tR(S_k^*\ot S_l^*)),
\end{eqnarray*}
where $\widetilde{\Sigma_{23}}:B^{\ot k}\ot B'^{\ot k}\ot B^{\ot l}\ot B'^{\ot l}\longrightarrow B^{\ot k+l}\ot B'^{\ot k+l}$  is the map that exchanges $B'^{\ot k}$ and $B^{\ot l}$. It is easy to check that $\widetilde{\Sigma_{23}}(\Sigma_k\ot\Sigma_l)=\Sigma_{k+l}$. In the third equality we used that $m_B(m_B^{(k)}\ot m_B^{(l)})=m_B^{(k+l)}$; this is due to the associativity of the multiplication.
Since $S_t,S_k,S_l$ and $R$ are intertwiners of $\G^{aut}(B',\psi')$, we have that $S_tR(S_k^*\ot S_l^*)\in\Hom(u^{\ot k}\ot u^{\ot l},u^{\ot t})$.
We can then apply Lemma \ref{lemphi} $(v)$ and find that $\phi_{k+l,t}(S_tR(S_k^*\ot S_l^*))\in\Hom(V^{\ot k+l},V^{\ot t})$. This completes the proof of $(3)$.

Let us prove $(4)$. By using the same method used in the prove of $(3)$, we have that $d\in\End(\widetilde{A}_0)$ is equivalent to $Q_0dQ_0^*=d\ot \eta_{B'}\eta_{B'}^*\in\End(V)$. This last condition ifollows from the definition of $I$.

To conclude the proof we check that the morphisms $\widetilde{\Psi}$ and $\Phi$ are inverse to each other. We have:
\begin{eqnarray*}
(\id\ot \Phi\widetilde{\Psi})(V)&=&
(\id\ot\Phi)(\id\ot \Psi)(U)=
(\id\ot\Phi)(v)\\&=&(Q_0\uq)\widetilde{A}_0(Q_0^*\uq)+(Q_1\uq)\widetilde{A}_1(Q_1^*\uq)\\ &=&
(Q_0Q_0^*\ot 1_{M/I})V(Q_0Q_0^*\ot 1_{M/I})+(Q_1Q_1^*\ot 1_{M/I})V(Q_1Q_1^*\ot 1_{M/I})\\ &=&
V(Q_0Q_0^*Q_0Q_0^*\ot 1_{M/I})+V(Q_1Q_1^*Q_1Q_1^*\ot 1_{M/I})\\ &=&
V((Q_0Q_0^*+Q_1Q_1^*)\ot 1_{M/I})=V
\end{eqnarray*}
since, for $s=0,1$, $Q_s$ is an isometry such that $Q_sQ_s\in\End(V)$ and $Q_0Q_0^*+Q_1Q_1^*=\idbb$. Similarly,
\begin{eqnarray*}
(\id\ot \widetilde{\Psi})(\id\ot\Phi)(a(u_k)) &=&
(\id\ot \widetilde{\Psi})(\widetilde{A}_k)=
(\id\ot \widetilde{\Psi})(\id \ot \pi)(A_k)=
(\id \ot \Psi)(A_k)\\ &=&
(Q_k^*\ot 1_N)(\id\ot\Psi)(U^{\ot k})(Q_k\ot 1_N)=
(Q_k^*\ot 1_N)v^{\ot k}(Q_k\ot 1_N)\\ &=&
a(u_k).
\end{eqnarray*}
The last equality requires particular attention. It is verified if and only if $Q_k\in\Hom(a(u_k),a(u)^{\ot k})$, therefore, in order to complete the proof, we have to check that the map $Q_k$ defined during the proof is in $\Hom(a(u_k),a(u)^{\ot k})$. If $k=0,1$, it is clear. In the general case, for $k\geq 2$, recall that 
$$Q^*_k=\delta^{-\frac{k-1}{2}}(\id_B\ot S_k)\circ T_k=\delta^{-\frac{k-1}{2}}(\id_B\ot S_k)(m_B^{(k)}\ot \id_{B'}^{\ot k})\Sigma_k.$$
We claim that

$\begin{array}{l}
Q^*_k=(\id_B\ot S_k)\circ \delta'^{-\um}(m_B\ot\id_{B'}^{\ot k})\Sigma_{23} \circ 
(\id_{B\ot B'}\ot \delta'^{-\um}(m_B\ot\id_{B'}^{\ot k-1})\Sigma_{23})\circ ...\\ \hspace{10.2cm} ... \circ (\id_{B\ot B'}^{\ot k-2}\ot \delta'^{-\um}(m_B\ot\id_{B'}^{\ot 2})\Sigma_{23}).
\end{array}$

This can be easily verified by evaluating the two formulations of $Q^*_k$ on a general element of $(B\ot B')^{\ot k}$. The equality depends essentially on the associativity of the multiplication.
Moreover, we observe that $\delta'^{-\um}(m_B\ot\id_{B'}^{\ot k})\Sigma_{23}\in\Hom(a(u)\ot a(u^{\ot k-1}),a(u^{\ot k}))$ by Proposition \ref{PropFunctor} $(4)$. Therefore, the linear map $Q_k^*$ can be obtained as composition and tensor product of morphisms. It follows that $Q_k^*\in\Hom(a(u)^{\ot k},a(u_k))$ and $Q_k\in \Hom(a(u_k),a(u)^{\ot k})$. This concludes the proof.\end{proof}

\begin{remark}
We observe that this theorem generalizes the results of Banica and Bichon. In \cite{bb07}, they investigated the free wreath product of two quantum permutation groups and, in the particular case of two quantum symmetric groups, they proved that
$$C(S_{mn}^+)/I\cong C(S_m^+)\ast_w C(S_n^+)$$
where $I\subset C(S^+_{mn})$ is the closed two-sided $\ast$-ideal generated by the relations corresponding to the condition $\id_{\C^n}\otimes \eta_{\C^m}\eta^*_{\C^m}\in\End(U)$ and $U$ is the fundamental representation of $S_{mn}^+$.
\end{remark}

\bibliographystyle{alpha}
\bibliography{biblio}

\noindent
{\sc Pierre FIMA} \\ \nopagebreak
  {Univ Paris Diderot, Sorbonne Paris Cit\'e, IMJ-PRG, UMR 7586, F-75013, Paris, France \\
  Sorbonne Universit\'es, UPMC Paris 06, UMR 7586, IMJ-PRG, F-75005, Paris, France \\
  CNRS, UMR 7586, IMJ-PRG, F-75005, Paris, France \\
\em E-mail address: \tt pierre.fima@imj-prg.fr}

\vspace{0.2cm}

\noindent
{\sc Lorenzo PITTAU} \\ \nopagebreak
  {Universit\'e de Cergy-Pontoise, 95000, Cergy-Pontoise, France\\
  Univ Paris Diderot, Sorbonne Paris Cit\'e, IMJ-PRG, UMR 7586, F-75013, Paris, France \\
  Sorbonne Universit\'es, UPMC Paris 06, UMR 7586, IMJ-PRG, F-75005, Paris, France \\
  CNRS, UMR 7586, IMJ-PRG, F-75005, Paris, France \\
\em E-mail address: \tt lorenzo.pittau@u-cergy.fr}

\end{document}